 \DeclareMathOperator{\rank}{rank}
\numberwithin{equation}{section}
\newtheorem {theorem} {Theorem}[section]
\newtheorem {lemma}[theorem]{Lemma}
\newtheorem {prop}[theorem]     {Proposition}
\newtheorem {Corollary}[theorem]  {Corollary}
\newtheorem {corollary}[theorem]       {Corollary}
\theoremstyle{definition}
\newtheorem {defi}[theorem]{Definition}
\newtheorem {Remark}[theorem]          {Remark}
\newcommand{\pr} {\smallskip\noindent{\bf Proof\,\,}}
\def\R{\mathbb{R}}
\title[An $h$-principle for embeddings transverse to a contact structure]{An $h$-principle for embeddings transverse\\ to a contact structure}
\author{Robert Cardona}
\address{Robert Cardona,
Departament de Matemàtiques i Informàtica, Universitat de Barcelona, Gran Via de les Corts Catalanes 585, 08007 Barcelona, Spain \newline \it{e-mail: robert.cardona@ub.edu }
 }
 \thanks{Robert Cardona acknowledges financial support from the Margarita Salas postdoctoral contract financed by the European Union-NextGenerationEU, as well as from the LabEx IRMIA and the Universit\'e de Strasbourg. This work was partially supported by the AEI grant PID2019-103849GB-I00 / AEI / 10.13039/501100011033 and the AGAUR grant 2021SGR00603. }
\author{Francisco Presas} \address{Francisco Presas, Instituto de Ciencias Matem\'aticas-ICMAT, C/ Nicol\'{a}s Cabrera, nº 13-15 Campus de Cantoblanco, Universidad Aut\'{o}noma de Madrid,
28049 Madrid, Spain \it{e-mail: fpresas@icmat.es} }
\thanks{Francisco Presas is supported by the grant reference number PID2019-108936GB-C21 (MINECO/FEDER). This work was partially supported by the ICMAT--Severo Ochoa grant CEX2019-000904-S.}
\begin{document}

\begin{abstract}
Given a class of embeddings into a contact or a symplectic manifold, we give a sufficient condition, that we call isocontact or isosymplectic realization, for this class to satisfy a general $h$-principle. The flexibility follows from the $h$-principles for isocontact and isosymplectic embeddings, it provides a framework for classical results, and we give two new applications. Our main result is that embeddings transverse to a contact structure satisfy a full $h$-principle in two cases: if the complement of the embedding is overtwisted, or when the intersection of the image of the formal derivative with the contact structure is strictly contained in a proper symplectic subbundle. We illustrate the general framework on symplectic manifolds by studying the universality of Hamiltonian dynamics on regular level sets via a class of embeddings.
\end{abstract}

\maketitle

\section{Introduction}

The study of geometric embeddings in contact and symplectic topology has a long history. The flexibility of a given class of embeddings is formalized by proving that they satisfy an $h$-principle \cite{hprinc}: their existence is reduced to algebraic topology. Gromov's foundational work \cite{G} provided several examples of classes of embeddings (and immersions) whose flexibility is covered by the classical instances of the $h$-principle: convex integration, the holonomic approximation lemma and the micro-flexibility lemma. We refer to Eliashberg-Mishachev's book \cite{hprinc} for a modern account. These examples include contact and symplectic embeddings, isocontact and isosymplectic embeddings, or subcritical isotropic embeddings.  Since then, the study of embedded submanifolds in contact and symplectic topology has been an important feature of the field. Let us particularly mention the existence of codimension two symplectic submanifolds via Donaldson's theory \cite{Don96} or Seiberg-Witten theory \cite{Tau}, and its contact analog of the existence of codimension two contact submanifolds \cite{IMP,Gir}.

In this work, we are interested in setting up a common framework that allows us to understand the flexibility of some of these classes of embeddings as particular instances of a more general construction. This new language allows us to prove some new results about embeddings in contact and symplectic geometry. Throughout the whole article, we will restrict for simplicity to smooth manifolds (and submanifolds) that are connected and orientable. Unless otherwise specified in a statement, our results hold for embeddings of closed manifolds or embeddings of open manifolds onto locally closed submanifolds of the target manifold. The target contact or symplectic manifolds that we consider are not necessarily closed.

The inspiration comes from iso-Reeb embeddings, introduced in \cite{CMPP1}.  We give a sufficient condition for a partial differential relation $\mathcal{P}_{\xi}$ that is defined over embeddings into a given (but arbitrary) contact manifold $(M,\xi)$ to satisfy a full $h$--principle. We denote by $P_{\xi}$ the relation because it will be a condition imposed to the differential of the embedding that is related to the ambient contact structure $\xi$. The conditions that we impose can be roughly summarized in the following two requirements: (for more precise definitions see Section \ref{s:isoc})
\begin{itemize}
\item We impose that given a genuine isocontact embedding (respectively formal) $\Psi\colon (V, \tilde \xi) \to (M, \xi)$, if we have a genuine solution (respectively formal) $\phi\colon N \to V$ for the relation $\mathcal{P_ {\tilde \xi}}$, then the embedding  $\Psi \circ \phi$ is a genuine solution (respectively formal) for the relation $\mathcal P_{\xi}$. This is a natural functorial property for the relation. Such a relation is thus called {\em well behaved under inclusions relation}.
\item Given a formal embedding, there is a germ of contact submanifold $(E,\xi')$ along $N$ in such a way that we can formally deform the embedding into a new one, which is a composition of an embedding satisfying the relation for $(E,\xi')$ and the inclusion of $E$ in $M$ endowed with a formal isocontact embedding structure of $(E,\xi')$ into the ambient contact manifold $(M,\xi)$. We say that a relation satisfying this condition admits an \emph{isocontact realization}.
\end{itemize}
In a nutshell, instead of looking for an embedding compatible with the relation, we look for an intermediate submanifold with a concrete contact structure $\xi'$ for which the initial embedding satisfies $P_{\xi'}$ and then reduce the problem to finding an isocontact embedding of $(E,\xi')$ into the ambient contact manifold.

In the symplectic case, the careful reader may fill the equivalent approach just by guessing the definitions of well-behaved under inclusions relation and the natural condition for a property on a symplectic manifold having an isosympletic realization. 

The advantage of this strategy is that the first step (finding an isocontact or isosymplectic realization) might be easier than the initial problem, and the second step (flexibility of isocontact or isosymplectic embeddings) can be dealt with using known $h$-principles. There are two available statements for isocontact embeddings: one for embeddings with an overtwisted contact target, and one for embeddings of codimension greater or equal than two  (open case), or four (closed case). Respectively, we have a statement available in the symplectic case for embeddings of codimension two (open case) or four (closed case).

Once the definitions are set up in Section \ref{s:isoc}, we will outline how it covers some examples in the literature, like subcritical isotropic embeddings into contact or symplectic manifolds, Legendrian embeddings whose complement is overtwisted \cite{MNPS}, more recently iso-Reeb embeddings \cite{CMPP1}. Another simple consequence is an $h$-principle for codimension two isocontact embeddings with an overtwisted complement (Corollary \ref{cor:is2}). The techniques to show that a class of embeddings falls into the general framework can be very different, and it is the main difficulty when looking for new applications.\\

 We use our general approach to study two new classes of embeddings, one class in contact geometry and one class in symplectic geometry. The first class is the class of embeddings into a contact manifold $(M,\xi)$ that are transverse to the contact structure. Immersions of positive codimension transverse to a contact structure are known to satisfy a full $C^0$-dense $h$-principle \cite{G}, see also \cite[Theorem 14.2.2]{hprinc}. We show that there are subclasses of transverse embeddings that satisfy the $h$-principle. Recall that a contact manifold is overtwisted if there is a piecewise linear embedding of a codimension one disk (the overtwisted disk) with a specific germ of a contact structure on each connected component (see \cite{BEM}). We say that an embedding has an overtwisted complement if the complement of the embedding is an overtwisted contact manifold. In parametric families, it means that the whole family is disjoint with a fixed or a parametric family of overtwisted disks (see Remark~\ref{rm:OTcomp}).  New ideas are required to show that transverse embeddings adhere to our general setting, thus obtaining the main result of this work.

\begin{theorem}\label{thm:maintransv}
Let $(M,\xi)$ be a contact manifold. Embeddings transverse to the contact structure satisfy an $h$-principle in the following cases:
\begin{itemize}
\item the embedding has an overtwisted complement,
\item the formal embedding is \it{small}. 
\end{itemize}
The $h$-principle holds in the parametric, relative to the domain and relative to the parameter versions. In the small case, it is also $C^0$-dense.
\end{theorem}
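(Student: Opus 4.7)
My plan is to apply the general isocontact realization framework of Section~\ref{s:isoc} to the partial differential relation $\mathcal{P}_\xi$ of being transverse to the contact structure. This reduces the theorem to verifying two inputs: that $\mathcal{P}_\xi$ is well behaved under inclusions, and that it admits an isocontact realization in each of the two cases stated. The two bullets will then correspond respectively to the available $h$-principle for isocontact embeddings into overtwisted targets, and to the classical $h$-principle for isocontact embeddings of codimension $\geq 2$ with open domain.

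Well-behavedness under inclusions is direct from the defining identity $\tilde\xi=(d\Psi)^{-1}(\xi)$ of an isocontact embedding $\Psi\colon(V,\tilde\xi)\to(M,\xi)$: a dimension count shows that every isocontact embedding is itself transverse to $\xi$, so $d\Psi(TV)+\xi=TM$. Combined with $d\phi(TN)+\tilde\xi=TV$ this yields $d(\Psi\circ\phi)(TN)+\xi=TM$, and the formal version is identical with $F$ replacing $d\phi$.

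The heart of the proof is constructing the isocontact realization. In the \emph{small} case I would use the proper symplectic subbundle $W\subsetneq\xi|_{\phi(N)}$ containing $F(TN)\cap\xi$ to build the intermediate contact manifold. The subbundle $V:=F(TN)+W$ of $TM|_{\phi(N)}$ has rank $\dim W+1$ and satisfies $V\cap\xi=W$, symplectic and of corank $\geq 2$ inside $\xi$. Extending $V$ to a distribution on a tubular neighborhood of $\phi(N)$, perturbing to make it integrable, and performing a formal homotopy of $(\phi,F)$ that makes $\phi$ tangent to $V$, one integrates to an open submanifold $E$ of codimension $\geq 2$ in $M$. On $E$ the induced symplectic hyperplane field can be deformed into a genuine contact structure $\xi'$ using the standard flexibility of almost contact structures relative to $\phi(N)$, so that $(E,\xi')$ is a contact manifold, $\phi\colon N\to E$ is a genuine transverse embedding, and the inclusion $E\hookrightarrow M$ is a formal isocontact embedding. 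Since $E$ is open of codimension $\geq 2$ in $M$, the $h$-principle for isocontact embeddings yields a genuine isocontact embedding whose composition with $\phi$ gives the desired genuine transverse embedding; $C^0$-density is inherited from the Gromov $h$-principle.

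For the overtwisted complement case the construction is simpler. I would take $(E,\xi')$ to be a standard contact thickening of $N$ in which $N$ sits transversely, for instance $N\times D^{2k}_{\mathrm{std}}$ with an appropriate standard contact structure of dimension at most $\dim M$, together with the formal isocontact embedding $\Psi\colon(E,\xi')\to(M,\xi)$ induced by $F$. After shrinking $E$ if necessary, its complement in $M$ still contains an overtwisted disk from the hypothesis, so the $h$-principle for isocontact embeddings into overtwisted targets deforms $\Psi$ into a genuine isocontact embedding. Parametric, rel.\ domain, and rel.\ parameter versions follow since each ingredient admits such a version. The main obstacle is the isocontact realization in the small case: one must combine bundle-theoretic integrability of $V$ with the flexibility of symplectic and almost contact structures to pass from the purely algebraic data $W$ along $\phi(N)$ to a genuine contact manifold $(E,\xi')$. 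The overtwisted case should be essentially routine once the local model $E$ has been set up and overtwistedness of $M\setminus E$ has been verified.
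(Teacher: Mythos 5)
Your overall strategy coincides with the paper's: reduce to the framework of Theorem \ref{thm:hprincPxi} by checking that the transversality relation behaves well under inclusions (your argument for this is correct and matches the paper's) and by constructing an isocontact realization, then invoke the $h$-principles for isocontact embeddings (overtwisted complement in codimension zero, Gromov in codimension $\geq 2$). However, the isocontact realization itself --- which is the actual content of the theorem --- is not established by your argument. The key difficulty is to produce a \emph{germ of a genuine contact structure} on the intermediate submanifold $E\supset \phi(N)$ that is simultaneously (i) transverse to $\phi(N)$ and (ii) formally homotopic to the ambient structure compatibly with $F_t$. Your appeal to ``the standard flexibility of almost contact structures relative to $\phi(N)$'' does not deliver this: the existence $h$-principle for contact structures on open manifolds cannot be applied relative to $\phi(N)$ (the structure there is only a symplectic hyperplane field, not yet contact), and it gives no $C^0$-control on the resulting contact \emph{distribution}, so transversality to $N$ may be destroyed. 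Likewise, in the overtwisted case an ``appropriate standard contact structure'' on $N\times D^{2k}$ need not exist (the normal bundle need not be trivial) and, even when it does, there is no reason for it to lie in the correct formal class, i.e.\ for the inclusion $E\hookrightarrow M$ to carry monomorphisms $R_t$ with $R_t|_N$ homotopic to $F_t$ through formal transverse embeddings. The paper flags exactly this point as the new technical step and resolves it differently: one passes to a hypersurface $W\supset N$ transverse to the rotated hyperplane field, applies the $h$-principle for even contact structures (Theorem \ref{thm:hprincEven}), whose solutions can be taken $C^0$-close \emph{as distributions} (Remark \ref{rem:C0even}) and hence still transverse to $N$, and then contactizes in the normal direction via a form of type $\lambda=\tilde\gamma+z\zeta$ as in \cite[Lemma 16.2.2]{hprinc}; when the normal bundle of $N$ has no trivial section, this is done on a cover of $N$ by balls using the relative version of the even-contact $h$-principle.

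A second, related gap is in how you set up $E$ in the small case. Since $E$ must contain $\phi(N)$, its tangent space along $\phi(N)$ necessarily contains $d\phi(TN)=F_0(TN)$, not $F_1(TN)$, so you cannot take $TE|_{\phi(N)}=F_1(TN)+W$; and ``performing a formal homotopy of $(\phi,F)$ that makes $\phi$ tangent to $V$'' is not meaningful, because a formal homotopy cannot change the genuine differential $d\phi$. The standard fix (and the paper's) is dual: extend $F_t$ to ambient isomorphisms $G_t$ of $TM|_N$ and rotate the contact structure, $\xi_t=G_t^{-1}(\xi)$, so that at time $1$ the (formal) structure is transverse to the fixed $TN$ with $TN\cap\xi_1$ inside the rotated symplectic subbundle; then $E$ is simply the exponentiation of a complement $\nu$ of $TN\cap\xi_1'$ inside $\xi_1'$, with $TE|_N=TN\oplus\nu$ (no integrability of any extended distribution is needed, so the ``perturb to make it integrable'' step --- which is false for general distributions --- can be dropped). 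With these two repairs your outline would essentially become the paper's proof; as written, the realization step in both cases rests on unjustified appeals.
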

 See Definition \ref{def:smalltransv} for the condition of being small, which requires that the intersection of the image of the formal derivative with the contact structure is strictly contained in a proper symplectic subbundle. Informally, we are able to place a new contact submanifold of positive codimension as an intermediate one, this immediately implies that we have a codimension bigger or equal than two. A sample application of the flexibility of small transverse embeddings is the following:
\begin{corollary}\label{cor:corsmall}
Let $(M,\xi)$ be a contact manifold, and $e\colon N\longrightarrow (M,\xi)$ a formal transverse embedding. Assume that $(M,\xi)$ is embedded as a contact submanifold on a higher dimensional contact manifold $i\colon (M,\xi) \rightarrow (W, \xi')$. The embedding $i\circ e\colon N\longrightarrow (W, \xi')$, which inherits a formal transverse embedding structure from $e$, is isotopic to a $C^0$-close embedding transverse to $\xi'$.
\end{corollary}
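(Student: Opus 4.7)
The strategy is to apply the small-transverse case of Theorem~\ref{thm:maintransv} to the composed formal embedding $i\circ e\colon N\to(W,\xi')$; the ambient contact inclusion $i\colon(M,\xi)\hookrightarrow(W,\xi')$ will furnish the proper symplectic subbundle required by Definition~\ref{def:smalltransv} essentially for free.

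First I would check that $i\circ e$ carries a natural formal transverse structure. Let $F:=Di\circ F_e$ be its formal derivative, so that $\mathrm{Im}(F)\subset TM\subset TW|_M$. Since $i$ is a contact embedding, $\xi'\cap TM=\xi$, and a dimension count gives $TM+\xi'|_M=TW|_M$ (any vector in $TM\setminus\xi$ is transverse to $\xi'$). Combined with $\mathrm{Im}(F_e)+\xi=TM$, coming from the formal transversality of $e$, this yields $\mathrm{Im}(F)+\xi'=TW$ along $i\circ e(N)$, as required.

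The crux is smallness. Because $M$ is a contact submanifold of positive codimension in $W$, the subbundle $E:=\xi|_M=\xi'\cap TM$ is symplectic in $\xi'|_M$ of rank $\dim M-1<\dim W-1=\mathrm{rk}\,\xi'$, hence a proper symplectic subbundle of $\xi'$ along $M$. As $\mathrm{Im}(F)\subset TM$, we have $\mathrm{Im}(F)\cap\xi'\subset E$; using $\mathrm{Im}(F_e)+\xi=TM$, a pointwise dimension count gives $\mathrm{rk}(\mathrm{Im}(F)\cap\xi')=\dim N-1<\dim M-1=\mathrm{rk}\,E$ whenever $\dim N<\dim M$ (the remaining case $\dim N=\dim M$ forces $e$ to be a local diffeomorphism and the corollary to be vacuous). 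Thus $i\circ e$ is small in the sense of Definition~\ref{def:smalltransv}, and the second bullet of Theorem~\ref{thm:maintransv} directly supplies a genuine transverse embedding $C^0$-close and isotopic to $i\circ e$. I do not expect any substantive obstacle: the ambient inclusion essentially hands us the proper symplectic subbundle, and the only point worth careful checking is that the bundle $E$ constructed here conforms to the precise formulation of Definition~\ref{def:smalltransv}, which is a formality since $E$ is globally defined along the image of $i\circ e$.
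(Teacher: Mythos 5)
Your proposal is correct and follows essentially the same route as the paper: the paper also verifies that $G_1=di\circ F_1$ gives a formal transverse structure and then exhibits the splitting $\xi'|_M=\xi\oplus V$ (with $V$ the symplectic normal bundle of $M$ in $W$, i.e.\ the symplectic orthogonal of your $E=\xi'\cap TM$), noting $G_1(TN)\cap\xi'\subsetneq\xi$ so that $i\circ e$ is small and Theorem~\ref{thm:smalltrans} applies. Your extra dimension counts (formal transversality of the composition, $\operatorname{rank}(G_1(TN)\cap\xi')=\dim N-1<\operatorname{rank}\xi$) are exactly the details the paper leaves implicit, and the codimension bound $k\geq 3$ of Definition~\ref{def:smalltransv} is automatic since $M$ has even codimension at least two in $W$.
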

The same holds for parametric families of formal transverse embeddings in $(M,\xi)$, which can be made transverse parametrically and relative to the parameter and domain inside $(W,\xi')$. Given an arbitrary $(M,\xi)$, the simplest example of $W$ is a product of $M$ with a Liouville domain.\\

The second application of our general framework concerns Hamiltonian dynamics in symplectic manifolds. In recent works \cite{CMPP1, T1, T2, TdL}, the universality properties of Hamiltonian dynamics and ideal hydrodynamics have been investigated. The punchline of ``dynamical universality" is understanding whether an arbitrary smooth non-vanishing vector field on a closed manifold can be embedded as an invariant subsystem of a concrete class of dynamical systems (e.g. a Hamiltonian vector field along a compact regular energy level set) on a higher dimensional manifold. For example, a vector field can be embedded in a Reeb vector field in a high enough dimensional contact sphere if and only if it is geodesible \cite{CMPP1}. Similarly, a vector field can be embedded into the dynamics of a potential well as long as it admits a strongly adapted one-form \cite{T1}. It is interesting to establish a dimensional lower bound for the manifold where the target dynamical system is defined and to try to prescribe the ambient manifold or associated geometric structure (symplectic or contact for example). In the last section, we investigate the universality properties of Hamiltonian systems along regular energy level sets. More precisely, let $X$ be an arbitrary non-vanishing vector field on a manifold $N$ and $(W,\omega)$ be a symplectic manifold. Given an embedding of $N$ into $W$, is there a function $H\in C^\infty(W)$ for which $N$ lies in a regular level set of $H$ and whose Hamiltonian vector field restricts along $N$ as $X$? We call such an embedding ``Hamiltonian", in analogy to Reeb embeddings introduced in \cite{CMPP1}. We adapt the techniques introduced in \cite{CMPP1} to the setting of Hamiltonian flows and exhibit an application of our general framework in symplectic geometry.

\begin{theorem}\label{thm:mainuniv}
Let $N$ be a closed manifold and $X$ a smooth non-vanishing vector field on $N$. Let $2m$ the smallest even integer in $\{3\dim N+3, 3\dim N+4\}$ and $S^{2m-1}$ the unit sphere in $(\mathbb{R}^{2m},\omega_{std})$. Then there exists an embedding $e\colon N\rightarrow S^{2m-1}$, an embedding $f\colon S^{2m-1} \rightarrow \mathbb{R}^{2m}$ that is a $C^0$-perturbation of the standard unit sphere in $\mathbb{R}^{2m}$, and a compactly supported function $H\in C^{\infty}(\mathbb{R}^{2m})$ such that 
\begin{itemize}
\item $f(S^{2m-1})$ is a regular energy level set of $H$,
\item the Hamiltonian vector field $X_H$ of $H$ satisfies $X_H|_{f(e(N))}=f_*e_*X$.
\end{itemize}
\end{theorem}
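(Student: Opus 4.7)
The plan is to introduce a class of \emph{Hamiltonian embeddings} $e\colon (N,X)\to (W,\omega)$, the symplectic analog of the iso-Reeb embeddings of \cite{CMPP1}, and to show that this class fits the isosymplectic realization framework of Section \ref{s:isoc}. By definition a Hamiltonian embedding admits a Hamiltonian $H$ with $e(N)$ in a regular level set and $X_H|_{e(N)}=e_\ast X$. Differentially this is equivalent to $i_{e_\ast X}e^\ast\omega=0$ together with the existence of a non-degenerate transverse direction along $e(N)$ playing the role of $dH^\sharp$. The formal version drops the integrability of this bundle data.

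First I would check that the Hamiltonian embedding relation admits an isosymplectic realization. Given a formal Hamiltonian embedding into $(W,\omega)$, the formal bundle data naturally singles out a hyperplane distribution along $N$, which I would integrate to a germ of hypersurface $\Sigma$ with characteristic foliation tangent to $X$; a further symplectic thickening $(E,\omega_E)$ of $\Sigma$, modelled on a Weinstein-type neighbourhood of a stable hypersurface, contains $N$ as a genuine Hamiltonian embedding, while the inclusion $E\hookrightarrow (W,\omega)$ is formally isosymplectic. The well-behavedness under inclusions is immediate because the conditions $i_{e_\ast X}e^\ast\omega=0$ and regularity of the ambient level set are preserved under honest isosymplectic composition. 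Consequently, the general h-principle of this paper reduces the theorem to (i) producing a formal Hamiltonian embedding of $(N,X)$ into $(\mathbb{R}^{2m},\omega_{\mathrm{std}})$ with image on the standard sphere, and (ii) invoking the closed-case isosymplectic h-principle for the thickening $E\to\mathbb{R}^{2m}$. The codimension hypothesis for (ii) is met, since it is at least $2m-\dim N-1\geq 2\dim N+2\geq 4$.

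For the formal solution, I would embed $N$ smoothly into $S^{2m-1}$ by Whitney and build the accompanying bundle data along $e(N)$: a symplectic subbundle of appropriate rank containing the image of $de$ modulo $\langle e_\ast X\rangle$, together with a symplectic complement transverse to $TS^{2m-1}$. The obstructions to such a splitting are classified by the cohomology of $N$ valued in homotopy groups of the relevant Stiefel-type symplectic frame spaces, which stabilise as soon as the rank of the complement exceeds $2\dim N$. The hypothesis $2m\geq 3\dim N+3$ places us in this stable range, and the adjustment to $3\dim N+4$ for $\dim N$ even merely enforces the parity of $2m$. The parametric and relative versions follow from the same stability.

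Applying the h-principle then yields a genuine Hamiltonian embedding of $(N,X)$ into $(\mathbb{R}^{2m},\omega_{\mathrm{std}})$ whose image lies on a $C^0$-small deformation $f(S^{2m-1})$ of the standard sphere, extracted from the isosymplectic thickening delivered by the general framework. A compactly supported $H\in C^\infty(\mathbb{R}^{2m})$ having $f(S^{2m-1})$ as a regular level set is then standard, and multiplying $H$ by a positive function constant along $f(S^{2m-1})$ rescales the induced characteristic direction so that $X_H|_{f(e(N))}$ matches $f_\ast e_\ast X$ exactly. The main obstacle I anticipate is the first step: producing the symplectic germ $(E,\omega_E)$ from purely formal data in a functorial and parametric way. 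This requires a normal form for pre-symplectic submanifolds with a prescribed characteristic vector field, adapting the iso-Reeb neighbourhood construction of \cite{CMPP1} to the less rigid symplectic setting where the would-be contact form is only a primitive of a degenerate two-form.
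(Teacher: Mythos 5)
Your proposal follows essentially the same route as the paper: define (iso-)Hamiltonian embeddings, show they admit an isosymplectic realization via a germ of hypersurface with prescribed characteristic foliation plus a symplectic thickening, produce the formal solution on the sphere by obstruction theory in the stable range (the paper's Proposition~\ref{thm:HamS}, which needs $\dim M\geq 3\dim N$, followed by an explicit codimension-two stabilization $\R^{2k}\hookrightarrow\R^{2k}\times\R^2$ to achieve smallness --- this stabilization, not just parity, is what accounts for the $+3/+4$ in the dimension bound), and conclude with the isosymplectic $h$-principle. One small correction: in the final step the positive function multiplying $H$ cannot be taken constant along the hypersurface; since the $h$-principle only matches line fields, it must equal the reciprocal of the pointwise proportionality factor between $X_H$ and $f_*e_*X$ near $e(N)$, as in the paper's Lemma~\ref{lem:sect}.
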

This theorem exhibits the dynamical universality of Hamiltonian systems and shows that even $C^0$-perturbations of the standard unit sphere in $(\R^{2m},\omega_{std})$ are flexible enough to contain, in an invariant submanifold of high enough codimension, arbitrary dynamics (e.g. hyperbolic strange attractors, minimal flows, vector fields all whose orbits are periodic but whose maximal period admits no upper bound, etc...). We consider the concrete situation of perturbations of the standard unit sphere for the sake of exposition. A completely general statement for any embedded submanifold $N$ in a hypersurface of a symplectic manifold of dimension $2m$ could be stated using our general $h$-principle framework, see Remark~\ref{rm:generalHam}. We point out that the ambient dimension that we obtain is needed for our proofs but is not, at least in general, sharp. On the other hand if one prescribes not only the dynamics but also the restriction of the ambient symplectic structure to the embedded hypersurface $N$ (which we need to do in our proofs), then examples for which the dimension is almost sharp exist. These can be constructed by arguing exactly as in the examples discussed in \cite[Proposition 5.14]{CMPP1} for the case of Reeb embeddings.  \\

The paper is organized as follows. We start by reviewing some background material on the $h$-principle in contact and symplectic geometry in Section \ref{s:prel}. We give sufficient conditions for a class of embeddings into a contact or symplectic manifold for it to satisfy some versions of the $h$-principle in Section \ref{s:isoc}. In Section \ref{s:trans} we study transverse embeddings into contact manifolds and apply our previous discussion to prove Theorem \ref{thm:maintransv} and deduce Corollary \ref{cor:corsmall}. In Section \ref{s:Hamst}, we give motivating examples for the study of Hamiltonian embeddings and give a preliminary result for Hamiltonian structures. Finally, we define and analyze Hamiltonian embeddings in Section \ref{s:Hamemb}, and prove Theorem \ref{thm:mainuniv}. The Appendix contains several technical lemmas needed throughout the paper. \\

\textbf{Acknowledgements:} The authors would like to thank Eduardo Fernández and Álvaro del Pino for their useful comments, and the referees for their valuable suggestions which improved the quality of the paper. The first author is grateful to Daniel Peralta-Salas for his invitation to the Instituto de Ciencias Matemáticas in September 2021.

\section{Preliminaries}\label{s:prel}

In this section, we briefly summarize some differential relations in contact and symplectic geometry that satisfy the $h$-principle.

Given a smooth fibration $X$ over a manifold $V$, a subset $\mathcal{R} \subset J^r(X)$ of the jet space of order $r$ is called a partial differential relation of order $r$. The set of sections of $(V,J^r(X))$ that lie in $\mathcal{R}$ is denoted by $\operatorname{Sec}_\mathcal{R}(V,J^r(X))$ and is called the space of formal solutions.  The sections of $X$ whose $r$-jet extension is a formal solution are called holonomic (or genuine) solutions. We say that $\mathcal{R}$ satisfies an existence $h$-principle if every formal solution is homotopic (through formal solutions) to a genuine solution. Other versions of the $h$-principle are:
\begin{itemize}
\item  \emph{relative parametric $h$-principle}: Fix a closed subset $C \subset K$, where $K$ is any compact parameter space. Assume we have a family of formal solutions $\sigma_k, k\in K$ such that $\sigma_k$ is a holonomic solution for $k\in C$. Then there exists a parametric family of formal solutions $\tilde \sigma_{k,t}, t\in [0,1]$ such that $\tilde \sigma_{k,0}=\sigma_k$, $\tilde \sigma_{k,1}$ are holonomic solutions and moreover $\tilde \sigma_{k,t} = \sigma_k$ for $t\in [0,1]$ and $k\in C$.

\item  \emph{$h$-principle relative to the domain}: For any closed subset $D\subset V$, assume we have a formal solution $\sigma$ that is holonomic in an open neighborhood $U$ of $D$. Then there exists a family of formal solutions $\sigma_t, t\in [0,1]$ such that $\sigma_0=\sigma$, $\sigma_1$ is holonomic and $\sigma_t|_U=\sigma|_U$ for all $t$.

\item  \emph{$C^0$-dense $h$-principle}: if any formal solution $\sigma \colon V \rightarrow J^r(X)$ is homotopic to a holonomic solution $j^r(s)$ such that $s$ can be chosen arbitrarily $C^0$-close to the projection of $\sigma$ to $J^0(X)$.

\end{itemize}

A partial differential relation that satisfies a relative to the parameter, relative to the domain $h$-principle induces a weak homotopy equivalence by the inclusion of genuine solutions into formal solutions of $\mathcal{R}$. We say in this case that it satisfies a full $h$-principle.

\subsection{Flexibility in contact and symplectic geometry}

 The first flexible object that will be of interest in this work is that of overtwisted contact structures. Any contact structure is either overtwisted or non-overtwisted (usually called a tight contact structure). For this relation, a genuine solution is an overtwisted contact structure in the ambient manifold, and a formal solution is defined as an almost contact structure, also called a formal contact structure.
\begin{defi}
A formal contact structure (or almost contact structure) is a pair $(\alpha,\omega)\in \Omega^1(M)\times \Omega^2(M)$ such that $\alpha\wedge \omega^n$ is a volume form.
\end{defi}
 Note that in particular the hyperplane distribution $\xi=\ker \alpha$ equipped with $\omega|_{\xi}$ defines a symplectic vector bundle. We will say that two formal contact structures $(\alpha_0,\omega_0)$ and $(\alpha_1,\omega_1)$ are homotopic if there is an homotopy of pairs $(\alpha_t,\omega_t)$ such that $\alpha_t\wedge \omega_t^n$ is a volume form for each $t\in [0,1]$. Equivalently, the symplectic vector bundles $(\xi_0=\ker \alpha_0,\omega_0|_{\xi_0})$ and $(\xi_1=\ker \alpha_1,\omega_1|_{\xi_1})$ are homotopic as symplectic hyperplane distributions of $TM$. The main theorem is that overtwisted contact structures satisfy a full $h$-principle.

\begin{theorem}\cite{BEM, El} \label{thm:BEM}
Overtwisted contact structures (with a fixed overtwisted disk) satisfy a full $h$-principle.
\end{theorem}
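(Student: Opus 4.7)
The plan is to reduce the theorem to a local extension problem on balls, and to exploit the fixed overtwisted disk as a source of flexibility. I would start by choosing a sufficiently fine triangulation of $M$ such that the prescribed overtwisted disk $\Delta$ sits inside a single top-dimensional simplex, together with a neighborhood $U$ of $\Delta$ on which the formal contact structure $(\alpha,\omega)$ is already genuine (this can be arranged because the overtwisted disk carries a prescribed germ of a genuine contact structure by definition). The problem then becomes: deform $(\alpha,\omega)$ through formal contact structures, keeping it fixed on $U$, until it is genuine on all of $M$.

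Next, I would extend the genuine contact structure cell by cell along the dual skeleton of the triangulation, starting from $U$. At each inductive step, the task reduces to the following local problem on a ball $B\subset M$: given a formal contact structure on $B$ which is genuine near $\partial B$, and assuming that $B$ is connected to $U$ by a thin tube in which the overtwisted disk remains "visible" (via an ambient isotopy if necessary), extend it, rel boundary and rel the tube, to a genuine contact structure on $B$. Once this extension lemma is established, the full (non-parametric) $h$-principle follows by induction on the skeleton.

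The deepest ingredient is the extension lemma itself. In dimension three, it follows from Eliashberg's classification of overtwisted structures via convex surface theory. In higher dimensions, the strategy of Borman–Eliashberg–Murphy is to construct a family of model "contact shells" built from contactizations and suspensions of Weinstein structures, classify their boundary germs, and show that any almost contact germ on the boundary of a ball arises (up to homotopy of formal structures) as the boundary of some filled shell whose total space contains an overtwisted disk. The parametric, relative-to-parameter and relative-to-domain versions are obtained by running the same cell-by-cell scheme fiberwise over a compact parameter space $K$, relative to a closed subset $C\subset K$ and to the subdomain where the structures are already genuine, using a parametric form of the extension lemma.

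The main obstacle is precisely the extension lemma in arbitrary odd dimension: identifying the right notion of overtwisted disk for which every formal germ admits such a filling, and verifying that the filling can be made to depend continuously on parameters. The combinatorial skeleton argument and the reduction to a ball extension are comparatively routine once this analytic/topological core has been secured.
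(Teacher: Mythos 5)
This statement is not proved in the paper at all: it is quoted verbatim from Borman--Eliashberg--Murphy \cite{BEM} (and Eliashberg \cite{El} in dimension three) and used as a black box, so there is no internal proof to compare your argument against. Judged on its own terms, your write-up is a broadly accurate description of the architecture of the BEM proof --- reduction via holonomic approximation to an extension problem over balls connected to the region where the structure is already genuine near the fixed overtwisted disk, the use of model contact shells, and a fiberwise version for the parametric and relative statements --- but it is not a proof. The entire mathematical content of the theorem is concentrated in what you call the ``extension lemma,'' which you explicitly defer: the construction of universal circular model shells, the subordination of an arbitrary formal germ on the boundary of a ball to finitely many such models, the equivalence between the chosen notion of overtwisted disk and fillability of those models, and the quantitative control needed to make all of this work in families. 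That is the whole of the (long) BEM paper, and asserting that it ``is the strategy of Borman--Eliashberg--Murphy'' does not establish it.

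One substantive inaccuracy worth flagging in your sketch: the logic of the key step is not that ``any almost contact germ on the boundary of a ball arises as the boundary of some filled shell whose total space contains an overtwisted disk.'' Rather, BEM show that any shell can be dominated by (embedded into) standard models after a formal homotopy, and that the standard models become genuinely fillable precisely when an overtwisted disk is available \emph{in the complement}; the overtwisted disk is consumed as an external resource, not produced inside the filling. This distinction matters for the relative and parametric versions (and is the reason the paper's Remark~\ref{rm:OTcomp} insists on a fixed or continuously varying family of overtwisted disks disjoint from everything being deformed). If you intend to use this theorem, the honest course --- and the one the paper takes --- is to cite \cite{BEM} rather than to reprove it.
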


The other two relations that will be very important in our discussion are isocontact and isosymplectic embeddings. Recall that a map $f\colon (N,\xi_N=\ker \alpha_N)\longrightarrow (M,\xi_M=\ker \alpha_M)$ between contact manifolds is called isocontact if $f^*\alpha_M=g.\alpha_N$ where $g \in C^\infty(N)$ is an everywhere positive function. A bundle monomorphism $F\colon TN\rightarrow TM$ is called isocontact if $\xi_N= F^{-1}(\xi_M)$ and $F$ induces a conformally symplectic map with respect to the conformal symplectic structures $CS(\xi_N)$ and $CS(\xi_M)$.  A formal isocontact embedding is an embedding $f\colon (N,\xi_N) \longrightarrow (M,\xi_M)$ and a family of bundle monomorphisms $F_t\colon TN \longrightarrow TM$ covering $f$ such that $F_0=df$ ad $F_1$ is an isocontact bundle monomorphism. The classical $h$-principle for isocontact embeddings, proved by Gromov \cite{G}, works in codimension four (or two in the open case).

\begin{theorem}\label{thm:isoc}
Isocontact embeddings of $(N,\xi_N)$ into $(M,\xi_M)$ satisfy a full $h$-principle in the following cases: 
\begin{itemize}
\item $N$ is closed and $\operatorname{cod}N\geq 4$. In this case, the $h$-principle is also $C^0$-dense.
\item $N$ is open and $\operatorname{cod} N \geq 2$. Given a core $N_0\subset $, we can choose the genuine solution to be $C^0$-close to the formal solution near $N_0$. Furthermore, the $h$-principle is relative to a domain $A$ as long as $N\setminus A$ has only open components.
\end{itemize}
\end{theorem}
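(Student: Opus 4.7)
The plan is to follow Gromov's classical strategy: reduce the $h$-principle for isocontact embeddings to the $h$-principle for genuine contact structures (in a prescribed formal class) on open manifolds via a tubular neighborhood construction, and then bootstrap from the open to the closed case by a cell-by-cell extension.

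First I would observe that a formal isocontact embedding $(f,\{F_t\colon TN\to TM\})$ determines, via the final bundle monomorphism $F_1$ and the induced splitting of a tubular neighborhood, a formal contact structure $(\alpha,\omega)$ on a tubular neighborhood $\nu$ of $f(N)$ in $M$; this pair restricts to $\xi_N$ along $f(N)$ and has its normal data prescribed by $F_1$. Producing a genuine isocontact embedding is then equivalent to two subtasks: (i) deforming $(\alpha,\omega)$ rel $f(N)$ through formal contact structures into a genuine contact structure $\xi'$ on $\nu$, and (ii) applying a Gray/Moser argument to identify the germ of $(\nu,\xi')$ with a contactly embedded neighborhood of $f(N)$ in $(M,\xi_M)$.

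When $N$ is open with $\operatorname{cod} N \geq 2$, the neighborhood $\nu$ is itself open. Since contact structures form an open, $\mathrm{Diff}$-invariant relation on the space of almost contact pairs, Gromov's $h$-principle for contact structures on open manifolds (proved via holonomic approximation and convex integration) performs step (i) rel $f(N)$. The parametric version, the $C^0$-closeness near a core, and the relativity to a domain $A$ whose complement has only open components, all follow directly from the corresponding properties of holonomic approximation. For closed $N$ with $\operatorname{cod} N \geq 4$, I would choose a CW subcomplex $K\subset N$ of positive codimension whose complement $N\setminus K$ deformation retracts onto an open core, apply the open case to a neighborhood of $f(N\setminus K)$, and then extend the genuine solution cell by cell across $K$ using the microflexibility of the isocontact relation. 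The hypothesis $\operatorname{cod} N \geq 4$ is precisely what ensures that the Grassmannian-like fibers of the underlying jet relation (spaces of symplectic monomorphisms into $\xi_M$) are sufficiently connected for the cell-by-cell extension to succeed without obstruction, and this also yields the $C^0$-density.

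The main obstacle is this final extension across $K$: gluing the genuine solution obtained on $N\setminus K$ to the formal data prescribed along $K$ requires careful use of microflexibility combined with the connectivity afforded by codimension four, and tracking this through the parametric and relative-to-the-domain versions is where most of the technical work lies.
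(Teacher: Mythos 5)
First, note that the paper offers no proof of this statement: it is quoted as Gromov's classical theorem (see \cite{G} and \cite[Chapter 12]{hprinc}), so your proposal can only be compared with the standard argument in the literature.

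Your reduction has a genuine gap at step (ii). After producing a genuine contact structure $\xi'$ on the tubular neighborhood $\nu$ of $f(N)$ which contains $f(N)$ as a contact submanifold and which is only \emph{formally} homotopic to $\xi_M|_{\nu}$, you still have to produce a contact embedding of $(\nu,\xi')$ into $(M,\xi_M)$ near the inclusion. This is a codimension-zero isocontact embedding problem for an open manifold, and a Gray--Moser argument does not solve it: Gray stability requires a path of \emph{genuine} contact structures (with suitable compact support), whereas your homotopy from $\xi'$ to $\xi_M|_{\nu}$ passes through non-integrable almost contact structures. Worse, the codimension-zero statement is false in general (a neighborhood of an overtwisted disk is formally but not genuinely embeddable into a tight contact manifold of the same dimension); it becomes true only under an extra hypothesis such as overtwistedness of the complement, which is exactly Corollary \ref{cor:BEM} and the reason Theorem \ref{thm:hprincPxi} splits into an overtwisted case and a positive-codimension case. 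In the positive-codimension case the paper closes this very gap by invoking Theorem \ref{thm:isoc} itself, so your argument is essentially circular: the missing last step is an instance of the theorem you are trying to prove, or of a strictly harder codimension-zero version of it.

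The standard proof runs in the opposite direction: one deforms the embedding $f$ inside the fixed $(M,\xi_M)$ rather than deforming the ambient structure around a fixed $f$. Concretely, one first applies Gromov's $h$-principle for embeddings realizing a prescribed pullback of a differential form (\cite[Theorem 12.1.1]{hprinc}) to isotope $f$, $C^0$-closely, so that $f^*\alpha_M$ defines the conformal class of $\alpha_N$; this analytic step is where the hypotheses $\operatorname{cod}N\geq 2$ (open) and $\operatorname{cod}N\geq 4$ (closed) actually enter, and it is also the source of the $C^0$-density, the behavior near a core, and the relative statements. One then corrects the normal data by fiberwise symplectic linear algebra in the conformal symplectic normal bundle. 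Your heuristic that the codimension-four hypothesis reflects connectivity of Grassmannian-type fibers is therefore not the right explanation, and the cell-by-cell extension you describe for closed $N$, while in the right spirit for the passage from open to closed, cannot rescue the flawed step (ii) on which it rests.
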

A polyhedron $N_0 \subset N$ is called a core of an open manifold $N$ if, for any arbitrarily small neighborhood $U$ of $N_0$, there is an isotopy fixed at $N_0$ which brings $U$ to $N$.

We say that an embedding in a closed contact manifold has an overtwisted complement if each connected component of its complement is an overtwisted contact manifold. In this case, the $h$-principle for overtwisted contact manifolds yields an improved isocontact embedding theorem, which is just the parametric version of \cite[Corollary 1.4]{BEM}.

\begin{Corollary}\label{cor:BEM}
Codimension zero isocontact embeddings of open manifolds with an overtwisted complement satisfy an $h$-principle that is parametric, relative to the parameter, and relative to any domain $A$ such that $M\setminus A$ is overtwisted.
\end{Corollary}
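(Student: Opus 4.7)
The plan is to reduce the codimension-zero isocontact embedding problem to a formal contact structure deformation problem on $M$ and then resolve it using the parametric, relative $h$-principle for overtwisted contact structures (Theorem~\ref{thm:BEM}) together with Gray stability. The non-parametric case is essentially \cite{BEM}, so the task reduces to checking that the construction extends verbatim to families.

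Given a formal isocontact embedding $f\colon (N,\xi_N) \to (M,\xi_M)$ in codimension zero with overtwisted complement $M \setminus f(N)$, I transport $\xi_N$ along $f$ to obtain a genuine contact structure $f_*\xi_N$ on the open subset $f(N) \subset M$. Using the bundle homotopy $F_t$ supplied by the formal isocontact data, I build a homotopy of formal contact structures $\tilde\xi_t$ on $M$ with $\tilde\xi_0 = \xi_M$ and such that $\tilde\xi_1$ is genuinely contact on a neighborhood of $M \setminus f(N)$ (where it equals $\xi_M$) and matches $f_*\xi_N$ on an exhausting subdomain of $f(N)$; a collar interpolation guarantees that $\tilde\xi_t \equiv \xi_M$ on $M \setminus f(N)$ throughout. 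The locus where $\tilde\xi_1$ fails to be a genuine contact structure is then contained in a collar of the overtwisted region, so Theorem~\ref{thm:BEM}, applied relative to the domain where $\tilde\xi_1$ is already honest, upgrades $\tilde\xi_t$ to a genuine homotopy $\xi^s$ of contact structures on $M$ that starts at $\xi_M$ and is stationary on $M \setminus f(N)$. Gray stability then yields an ambient isotopy $\phi_s$ of $M$, supported in $f(N)$, with $\phi_s^*\xi^s = \xi_M$, and $\phi_1^{-1} \circ f\colon N \to M$ is the desired genuine isocontact embedding, isotopic to $f$ through formal isocontact embeddings.

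For the parametric version over a compact parameter space $K$, holonomic on a closed subset $C \subset K$, and relative to a domain $A \subset M$ with $M \setminus A$ overtwisted, the same recipe runs in families: the formal homotopy $\tilde\xi_{k,t}$ is constructed parameter-wise using cutoff functions so as to agree with the pre-existing holonomic data on $C$ and on a neighborhood of $A$; Theorem~\ref{thm:BEM} is invoked in its parametric, relative form with relative domain $A \cup (M \setminus f(N))$; and the parametric Gray stability theorem provides the family of ambient isotopies. The main delicate point, and the only one requiring care, is the collar construction of $\tilde\xi_{k,t}$: it must glue seamlessly with $\xi_M$ across the boundary of $f(N)$, match the formal data $F_{k,t}$ in the interior, and remain stationary on $C$ and on a neighborhood of $A$. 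This is a standard cutoff manipulation, and constitutes essentially the only bookkeeping needed beyond what is already present in the non-parametric argument of \cite{BEM}.
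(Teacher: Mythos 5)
Your overall strategy --- convert the formal isocontact data into a homotopy of formal contact structures on $M$ that is genuine near the complement of the image, make it genuine everywhere via the relative/parametric $h$-principle for overtwisted contact structures, and finish with Gray stability --- is exactly the route the paper has in mind (the paper offers no proof and simply invokes \cite[Corollary 1.4]{BEM} together with its parametric version). However, as written your argument has a genuine gap at the last step. Since $f_*\xi_N$ and $\xi_M$ need not agree near the frontier of the open set $f(N)$, any continuous interpolation $\tilde\xi_1$ that equals $\xi_M$ on $M\setminus f(N)$ can equal $f_*\xi_N$ only on a subdomain $f(N')$ with $\overline{f(N')}\subset f(N)$; the collar $f(N)\setminus \overline{f(N')}$ necessarily carries non-genuine data. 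Consequently the contact structure $\xi^1$ you produce restricts to $f_*\xi_N$ only over $f(N')$, so $\phi_1^{-1}\circ f$ is isocontact only on $N'\subsetneq N$ --- it is not yet ``the desired genuine isocontact embedding'' of the open manifold $N$. Repairing this requires an exhaustion $N'_1\Subset N'_2\Subset\cdots$ of $N$ and an inductive application of the relative $h$-principle, making the embedding isocontact over $N'_i$ while keeping it fixed over $N'_{i-1}$ and supporting each modification away from the previously treated compactum so that the process converges; alternatively, one should observe that in the paper's only use of the corollary (Theorem \ref{thm:hprincPxi}, Case 1) the domain is a neighborhood of a compact core and only the restriction to that core matters, so a single cutoff suffices there.

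A second, related problem: you apply Theorem \ref{thm:BEM} relative to the union of the inner subdomain and a neighborhood of $M\setminus f(N)$, so that $\xi^s$ is stationary on $M\setminus f(N)$ and $\phi_s$ is supported in $f(N)$. But then the only region left to modify is the collar, which contains no overtwisted disk: the fixed disk sits inside $M\setminus f(N)$, i.e.\ inside the relative set, and the hole-filling argument of \cite{BEM} must connect each hole to the fixed disk through the complement of the relative set. The correct application keeps the homotopy fixed only near the prescribed domain $A$, near a fixed overtwisted disk $\Delta$ chosen in $M\setminus (A\cup f(N))$, and near $\overline{f(N')}$, accepting that the structures --- and hence the Gray isotopy --- may move elsewhere on $M\setminus f(N)$; this is all the corollary claims. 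Finally, be careful with the order of operations when ``upgrading'' $\tilde\xi_t$: one first makes the endpoint $\tilde\xi_1$ genuine relative to its genuine locus, and only then applies the one-parametric, relative-to-parameter version of Theorem \ref{thm:BEM} to the concatenated path to obtain the genuine path $\xi^s$ fed into Gray stability. Applying the parametric theorem directly to the family $\tilde\xi_t$ relative only to $t=0$ would produce an endpoint merely formally homotopic to $\tilde\xi_1$, which need not restrict to $f_*\xi_N$ anywhere.
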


More recently, an existence $h$-principle was established for codimension two isocontact embeddings of closed manifolds into any contact manifold \cite{CPP}, thus improving Theorem \ref{thm:isoc}.\\

On the symplectic side, a map $f\colon (V,\omega')\rightarrow (W,\omega)$ between symplectic manifolds is called isosymplectic if $f^*\omega=\omega'$. A bundle monomorphism $F\colon TV\rightarrow TW$ covering an embedding $f$ is called isosymplectic if $\omega'=F^*\omega$ \footnote{Abusing notation, we denote by $F^*\omega$ the form acting as $F^*\omega(u,v)=\omega(F(u),F(v))$ as in \cite{hprinc}.} and $f^*[\omega]=[\omega']$.  A formal isosymplectic embedding is an embedding $f\colon (V,\omega') \longrightarrow (W,\omega)$ and a family of bundle monomorphisms $F_t\colon TV \longrightarrow TW$ covering $f$ such that $F_0=df$ ad $F_1$ is an isosymplectic bundle monomorphism. The $h$-principle for isosymplectic embeddings was proved by Gromov as well \cite{G}.

\begin{theorem}\label{thm:isosymp}
Isosymplectic embeddings of $(V,\omega')$ into $(W,\omega)$ satisfy a full $h$-principle in the following cases:
\begin{itemize}
\item if $V$ is open and $\operatorname{cod}V\geq 2$. Given a core $V_0\subset V$, the genuine solution can be chosen to be $C^0$-close to the formal solution near $V_0$.
\item if $V$ is open or closed and such that $\operatorname{cod}V\geq 4$. Then the $h$-principle is also $C^0$-dense. 
\end{itemize}
\end{theorem}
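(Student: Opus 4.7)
The plan follows Gromov's original two-stage strategy. First, produce a \emph{symplectic embedding} $f_0\colon V\to W$ homotopic to the given formal solution through formal isosymplectic embeddings, with $df_0$ a symplectic bundle monomorphism---so $f_0^*\omega$ is a symplectic form on $V$ that is automatically cohomologous to $\omega'$ by the formal cohomological condition $f^*[\omega]=[\omega']$. Second, correct $f_0$ to achieve the equality $f_1^*\omega=\omega'$ on the nose via a Moser-type deformation in normal directions, without altering the homotopy class.

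For the first stage, the condition on $F\in\mathrm{Hom}(TV,TW)$ that $F^*\omega$ be nondegenerate is open and, together with the monomorphism condition, defines an ample relation in the sense of convex integration; the closed complementary set has codimension at least two in the relevant jet fiber. In the open case with $\operatorname{cod}V\geq 2$, convex integration applied to this relation directly produces a genuine symplectic embedding $f_0$ that is $C^0$-close to the formal embedding near a core, by the standard convex integration package in Eliashberg--Mishachev. In the closed case, one first obtains a symplectic immersion via holonomic approximation or convex integration, and then needs general position to upgrade the immersion to an embedding while retaining the symplectic condition; this is where the stronger codimension hypothesis $\operatorname{cod}V\geq 4$ enters, providing the room necessary to remove self-intersections without leaving the ample relation.

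For the second stage, pick a smooth path $\omega_t$ of cohomologous symplectic forms on $V$ from $f_0^*\omega$ to $\omega'$, which exists because the two endpoints lie in the same path component of the affine space of closed $2$-forms in the given class (after an initial linear interpolation plus a small nondegenerate perturbation). Writing $\dot\omega_t=d\beta_t$ by the Moser lemma, one seeks an isotopy $f_t$ whose infinitesimal generator $X_t$, viewed as a section of $TW$ along $f_t(V)$, solves $\iota_{X_t}\omega|_{f_t(V)}=\beta_t$. Nondegeneracy of $\omega$ gives pointwise solvability, and the codimension hypothesis supplies enough normal room to choose $X_t$ transverse to $f_t(V)$ so that integration in a tubular neighborhood preserves the embedding property and produces the desired $f_1$.

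The parametric, relative-to-the-domain, and relative-to-the-parameter versions, as well as the $C^0$-density in the $\operatorname{cod}V\geq 4$ case, follow by carrying out both stages in families with the prescribed behavior over closed subsets, since convex integration, holonomic approximation, and the Moser correction are all well-behaved under parameters and can be localized. The main obstacle is the first stage in the closed case: convex integration alone does not preserve injectivity, and arranging that the holonomic approximation remains embedded without destroying the symplectic property at the level of the derivative is exactly what forces the jump from $\operatorname{cod}V\geq 2$ to $\operatorname{cod}V\geq 4$.
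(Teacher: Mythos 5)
First, note that the paper does not prove this statement at all: Theorem \ref{thm:isosymp} is quoted as background, with the proof attributed to Gromov \cite{G} (see also \cite{hprinc}), so there is no in-paper argument to compare with; your proposal has to be judged against the known proof, and it does not reconstruct it.

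The decisive gap is in your second stage. You need a path $\omega_t$ of \emph{cohomologous symplectic} forms on $V$ joining $f_0^*\omega$ to $\omega'$, and you justify its existence by ``linear interpolation plus a small nondegenerate perturbation''. This is not available: the linear interpolation between two cohomologous symplectic forms is in general degenerate somewhere, and no small perturbation fixes that. What the formal data $F_t$ gives you is only a homotopy of \emph{nondegenerate, not closed} $2$-forms between $f_0^*\omega$ and $\omega'$; upgrading this to a path of closed nondegenerate forms in a fixed cohomology class is precisely the kind of statement that fails in general (the space of symplectic forms in a fixed class need not be connected), and if your two-stage scheme worked it would prove that formally homotopic cohomologous symplectic forms are always isotopic, which is false. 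The actual proof of Gromov's theorem never produces such a path: it works directly with the difference $f^*\omega-\omega'$ (exact by the formal cohomological hypothesis), kills it inductively over a skeleton/handle decomposition using the normal directions of the embedding together with holonomic approximation, and treats the top stratum in the closed case by a separate argument; this is where the hypotheses $\operatorname{cod}V\geq 2$ (open) and $\operatorname{cod}V\geq 4$ (closed) really enter.

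Two further points. In your first stage, the closed case is misdiagnosed: a formal solution already \emph{is} an embedding $f$ together with a homotopy of monomorphisms, so there is no immersion that needs to be made embedded by general position; the required deformations must be isotopies of the given embedding, and the jump to $\operatorname{cod}V\geq 4$ in Gromov's theorem is not a double-point count (general position would suggest a bound depending on $\dim V$, not the constant $4$). Also, the claim that ``monomorphism with $F^*\omega$ nondegenerate'' is an ample relation is asserted, not checked; ampleness for the symplectic-type relations in \cite[Ch.~16]{hprinc} requires a nontrivial reformulation, and in any case convex integration alone cannot produce the exactness $f_1^*\omega=\omega'$, which is the heart of the theorem.
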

\subsection{Flexibility of even contact and Hamiltonian structures}

We recall here two other geometric structures that satisfy the $h$-principle, as proved by McDuff \cite{McD} using the technique of convex integration.

\begin{defi}
An even contact structure on a manifold $M$ of dimension $2n$ is a hyperplane field defined by a one-form $\alpha$ such that $\alpha\wedge d\alpha^{n-1}\neq 0$. The one-form $\alpha$ is called an even contact form.
\end{defi}
Their formal counterpart is defined as follows.
\begin{defi}
A formal even contact structure on a manifold $M$ of dimension $2n$ is a pair $(\alpha, \beta)\in \Omega^1(M)\times \Omega^2(M)$ such that $\alpha \wedge \beta^{n-1}\neq 0$.
\end{defi}
The main theorem is that these satisfy a full $h$-principle \cite[Section 20.6]{hprinc}.
\begin{theorem}[$h$-principle for even contact structures]\label{thm:hprincEven}
Let $M$ be an even-dimensional manifold. Even contact structures satisfy all forms of the $h$-principle except the $C^0$-dense one. 
\end{theorem}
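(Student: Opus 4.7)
The plan is to apply Gromov's convex integration to the first-order open partial differential relation $\mathcal{R}\subset J^1(T^*M)$ on one-forms, defined pointwise by $\alpha\wedge(d\alpha)^{n-1}\neq 0$. Openness is immediate since the defining expression is a continuous section of $\Lambda^{2n-1}T^*M$. A formal even contact structure $(\alpha_0,\beta)$ on $M^{2n}$ produces a non-holonomic formal section of $\mathcal{R}$: take the $0$-jet to be $\alpha_0$ and any $1$-jet part whose antisymmetrization equals $\beta$. The goal is to homotope this formal section, through formal sections of $\mathcal{R}$, to a holonomic one.

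The crucial step is verifying ampleness of $\mathcal{R}$ in each principal subspace. Fix $p\in M$, local coordinates, and a principal direction $i$; the principal subspace is obtained by fixing the $0$-jet of $\alpha$ and all $\partial_k\alpha_j$ with $k\neq i$, letting the entries $\partial_i\alpha_j$ vary freely. On this slice one has a decomposition $d\alpha=dx_i\wedge\tilde\eta+\zeta$, where $\tilde\eta=\sum_{j\neq i}\partial_i\alpha_j\,dx_j$ depends linearly on the free variables and $\zeta$ is fixed. Since $dx_i\wedge\tilde\eta$ squares to zero, one obtains
\begin{equation*}
\alpha\wedge(d\alpha)^{n-1}=\alpha\wedge\zeta^{n-1}+(n-1)\,\alpha\wedge\zeta^{n-2}\wedge dx_i\wedge\tilde\eta,
\end{equation*}
which is an affine function of the free variables. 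Its zero locus is therefore either all of the principal slice (the degenerate case) or an affine hyperplane, whose complement is path-connected and has convex hull equal to the entire slice. The non-degeneracy $\alpha_0\wedge\beta^{n-1}\neq 0$ ensures that at each point one can pick a principal direction avoiding the degenerate case, and by compactness and a preliminary homotopy of the formal section one obtains a finite coordinate covering on which ampleness holds uniformly.

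With openness and ampleness established, Gromov's convex integration theorem in its parametric and relative versions (see \cite{hprinc}) produces simultaneously the parametric, relative-to-parameter and relative-to-domain forms of the $h$-principle. The failure of $C^0$-density is structural to convex integration: the holonomic one-form obtained oscillates in the chosen principal directions in order to realize the prescribed formal derivative, so it cannot be made $C^0$-close to $\alpha_0$, although its kernel hyperplane distribution does remain $C^0$-close to $\ker\alpha_0$.

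The main obstacle is the coordinated choice of principal directions throughout the iterative convex integration: one must check that at each step the chosen direction yields a non-degenerate affine function of the free variables, which requires handling the combinatorics of the coordinate covering together with a careful preliminary adjustment of the formal section to rule out the degenerate slices. Once this is arranged, the remainder of the argument is a standard application of the convex integration machinery.
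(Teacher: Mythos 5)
Your overall strategy --- convex integration applied to the open first-order relation $\alpha\wedge(d\alpha)^{n-1}\neq 0$ on one-forms --- is exactly the route of the sources the paper cites for this theorem (McDuff \cite{McD}, \cite[Section 20.6]{hprinc}), so the framework is the right one. However, the crux of the argument, the ampleness verification, contains a genuine error. The quantity $\alpha\wedge(d\alpha)^{n-1}$ is a $(2n-1)$-form, i.e.\ it takes values in the $2n$-dimensional space $\Lambda^{2n-1}T_p^*M$, so on a principal subspace you are looking at the zero locus of a \emph{vector-valued} affine map, not a scalar one: that locus is an affine subspace whose codimension equals the rank of the linear part $\tilde\eta\mapsto(n-1)\,\alpha\wedge\zeta^{n-2}\wedge dx_i\wedge\tilde\eta$, and it need not be a hyperplane. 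More importantly, if it \emph{were} a hyperplane the relation would fail to be ample: the complement of an affine hyperplane has two path-components, each an open half-space whose convex hull is itself and not the whole slice. Your sentence asserting that the complement of a hyperplane is path-connected with full convex hull is false, and as written the argument would not allow you to invoke convex integration at all.

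What saves the theorem is that the linear part above, when nonzero, always has rank at least two. Indeed, if $\mu:=\alpha\wedge\zeta^{n-2}\wedge dx_i\neq 0$, the kernel of $\eta\mapsto\mu\wedge\eta$ on $T_p^*M$ cannot have codimension one (write $\mu$ in a basis adapted to a putative $(2n-1)$-dimensional kernel and check all coefficients vanish; equivalently, under $\Lambda^{2n-2}V^*\cong\Lambda^2V\otimes\Lambda^{2n}V^*$ the rank of this map is the rank of a bivector, hence even). Consequently the intersection of the relation with each principal subspace is empty, everything, or the complement of an affine subspace of codimension at least two, which is path-connected with full convex hull --- this is the statement you actually need, and it is the non-trivial point of the proof. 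A secondary inaccuracy: convex integration \emph{does} produce a solution $\tilde\alpha$ that is $C^0$-close to the formal one-form $\alpha_0$ (it is only $d\tilde\alpha$ that is far from $\beta$); this is precisely Remark \ref{rem:C0even} of the paper and is used essentially in the proof of Theorem \ref{thm:OTtran}, so your claim that the resulting one-form cannot be made $C^0$-close to $\alpha_0$ should be corrected.
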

\begin{Remark}\label{rem:C0even}
The theorem proves a weak $C^0$-dense property as well. The formal even contact structure $(\alpha,\beta)$ is homotopic to an even contact form $(\tilde \alpha, d\tilde \alpha)$ that can be chosen to satisfy that $\tilde \alpha$ is arbitrarily $C^0$-close to $\alpha$, even if $d\tilde \alpha$ will not in general be close to $d\alpha$. This property will be important in our application of this theorem.
\end{Remark}

Similarly, in odd-dimensional manifolds, there is an odd version of symplectic structures that satisfy a full $h$-principle.

\begin{defi}
A Hamiltonian structure (or odd-symplectic structure) on an oriented $(2m+1)$-dimensional manifold $M$ is a closed two-form $\omega$ of maximal rank.
\end{defi}
The formal counterpart does not require that the form is closed anymore. 
\begin{defi}
A formal Hamiltonian structure on an oriented $2m+1$ dimensional manifold is a two-form of maximal rank.
\end{defi}
 Hamiltonian structures satisfy a full $h$-principle as well \cite[Theorem 2.1]{McD}, see also \cite[Theorem 20.5.1]{hprinc}.
\begin{theorem}[$h$-principle for Hamiltonian structures]\label{thm:McD}
Hamiltonian structures satisfy all forms of the $h$-principle except the $C^0$-dense one. One can prescribe the cohomology class $a=[\lambda]\in H^2(M,\mathbb{R})$ of the genuine solutions, parametrically, and relative to a domain $A$ if the formal solution $\omega$ satisfies $[\omega]|_A=[\lambda]|_A$.
\end{theorem}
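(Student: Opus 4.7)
My plan is to apply Gromov's convex integration theorem to an open first-order differential relation on 1-forms. Fix a closed 2-form $\lambda$ representing $a \in H^2(M,\mathbb{R})$, and consider the open first-order relation $\mathcal{R} \subset J^1(T^*M)$ on 1-forms $\eta$ imposing that $\lambda + d\eta$ be of maximal rank. Holonomic solutions of $\mathcal{R}$ are exactly Hamiltonian structures in the cohomology class $a$. Any formal Hamiltonian structure $\omega_0$ of the statement can be encoded as a formal solution of $\mathcal{R}$ by choosing $\eta_0 \equiv 0$ together with any 1-jet tensor whose antisymmetrization equals $\omega_0 - \lambda$; this is a pointwise linear algebra problem, solvable without obstruction.

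The main step is to verify that $\mathcal{R}$ is ample in the sense of Gromov. In local coordinates, fix a transverse direction $\partial_v$; the convex integration scheme varies the $\partial_v$-derivatives of the coefficients of $\eta$, which affect exactly the components of $d\eta$ involving $dx^v$. Freezing the other coefficients, the maximal rank condition $(\lambda + d\eta)^m \ne 0$ becomes a polynomial inequality on an affine space of dimension $2m$. Since the base 1-jet already lies in $\mathcal{R}$, this polynomial is not identically zero, and its non-vanishing locus is the complement of a proper real-algebraic hypersurface: path-connected, and with convex hull equal to the whole affine space. Ampleness follows.

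Gromov's $h$-principle for open ample relations then produces the parametric, relative-to-parameter, and relative-to-domain versions in one stroke. The prescribed cohomology class is built in by construction, and on a subset $A$ where the formal solution is already closed with $[\omega_0]|_A = [\lambda]|_A$, the Poincaré lemma provides a primitive of $\omega_0 - \lambda$ on a neighborhood of $A$ that serves as initial data for relative convex integration. The $C^0$-dense version necessarily fails because convex integration controls only the 0-jet $\eta$, not its derivatives, so $d\eta$ can oscillate wildly and $\lambda + d\eta$ may be $C^0$-far from $\omega_0$. I expect the main obstacle to be the ampleness verification, which requires unpacking the algebraic structure of $\omega^m$ as a polynomial in the free components of $d\eta$ and checking that the base jet being in $\mathcal{R}$ is enough to exclude degeneracy of that polynomial along any principal direction.
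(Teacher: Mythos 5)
The paper does not prove this statement; it is quoted from McDuff's paper and from Eliashberg--Mishachev (Theorem 20.5.1), and your overall strategy --- convex integration applied to the open first-order relation on a primitive $\eta$ with $\lambda+d\eta$ of maximal rank --- is exactly the strategy of those references. However, your ampleness verification contains a genuine gap. The set you must analyse is the intersection of $\mathcal{R}$ with a principal subspace, which here is
$\{\beta : (\omega+l\wedge\beta)^m\neq 0\}$ for a fixed covector $l$ and a fixed $2$-form $\omega$ (not necessarily in $\mathcal{R}$: ampleness is a condition on \emph{every} principal subspace, not only those through the base jet). Since $(l\wedge\beta)\wedge(l\wedge\beta)=0$, the function $\beta\mapsto(\omega+l\wedge\beta)^m=\omega^m+m\,\omega^{m-1}\wedge l\wedge\beta$ is \emph{affine}, so its zero locus is an affine subspace, not a general algebraic hypersurface. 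Your claim that ``the non-vanishing locus is the complement of a proper real-algebraic hypersurface: path-connected, and with convex hull equal to the whole affine space'' is false as a general principle --- the complement of a hyperplane is a proper algebraic hypersurface's complement, it is disconnected, and each of its two components is convex and proper, hence \emph{not} ample. This is precisely the dangerous case, and ruling it out is the entire content of the ampleness check: one must show that the linear map $\beta\mapsto\omega^{m-1}\wedge l\wedge\beta$ into $\Lambda^{2m}(T_x^*M)$ never has rank exactly one, so the zero locus is either everything, empty, or of codimension at least two. That this works uses the odd-dimensionality of $M$ in an essential way: in the even-dimensional (symplectic) analogue the target $\Lambda^{2n}(T_x^*M)$ is one-dimensional, the zero locus is generically a hyperplane, the relation is not ample, and indeed the corresponding $h$-principle fails on closed manifolds. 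Your argument, as written, does not distinguish these two cases and would ``prove'' both.

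Two smaller points. First, for the relative-to-the-domain version, the existence of a primitive of $\omega-\lambda$ on a neighborhood of $A$ is not given by the Poincar\'e lemma (which is local) but by the hypothesis $[\omega]|_A=[\lambda]|_A$, i.e.\ the vanishing of the de Rham class of $\omega-\lambda$ near $A$; this is exactly why that hypothesis appears in the statement. Second, your explanation of the failure of $C^0$-density is correct: convex integration yields $\eta$ $C^0$-close to $\eta_0$ but with uncontrolled derivatives, so $\lambda+d\eta$ need not be $C^0$-close to the formal solution --- this is consistent with Remark \ref{rem:C0even}, where the analogous phenomenon for even contact structures is exploited.
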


\section{Embeddings admitting isocontact/isosymplectic realizations}\label{s:isoc}

In this section, we study partial differential relations defined over embeddings of a manifold into either a contact or a symplectic manifold. If the relation satisfies two properties, it follows that those embeddings satisfy a very general $h$-principle.

\subsection{Embeddings in contact manifolds}

Let $(M,\xi)$ be a contact manifold, and $P_{\xi}$ be a differential relation depending on $\xi$, defined in $J^1(N,M)$ over embeddings of $N$ into $M$. A formal solution to such relation is a pair $(e, F_t)$ where $e$ is an embedding and $F_t\colon TN\longrightarrow TM$ is a family of monomorphisms covering $e$ (a ``tangential rotation") such that $F_1$ satisfies $P_{\xi}$.

\begin{defi}\label{def:isoext}
The property $P_\xi$ admits an \textit{isocontact realization} of codimension $2k$ if the following holds. Given a formal (respectively genuine) solution $(e,F_t)$ of $P_\xi$, there is an open submanifold $E$ of codimension $2k$ for which $N$ is a core, a contact structure $\xi'$ on $E$ such that $N\subset E$ satisfies $P_{\xi'}$, and some family of bundle monomorphisms $R_t\colon TE\rightarrow TM$ endowing $(E,\xi')$ with a formal (respectively genuine) isocontact embedding structure into $(M,\xi)$. Furthermore, we require that $R_t|_N$ is homotopic to $F_t$ through formal solutions.
\end{defi} 
Intuitively, the isocontact realization property says that near the embedding of $N$, and up to homotopy of formal contact structures starting at the ambient contact structure, there exists a germ of contact submanifold $E \supset N$ on which $N$ is a genuine solution. In our proofs $E$ is a neighborhood of the zero section of a disk subbundle of the normal bundle of $N$ in $M$. The other property that we need is the following.
\begin{defi}
A property $P_\xi$  defined on embeddings of $N$ \textit{behaves well under inclusions} if given a formal (respectively genuine) isocontact embedding of $(E,\xi')$ into $(M,\xi)$, any formal (respectively genuine) solution of $P_{\xi'}$ naturally extends to a formal (respectively genuine) solution of $P_\xi$. 
\end{defi}
In both cases, we will require that these properties are satisfied parametrically. The next general theorem is inspired by the $h$-principle for iso-Reeb embeddings \cite{CMPP1} and the proof of the $h$-principle for generalized isocontact immersions \cite[Section 16.2]{hprinc}.

\begin{theorem}\label{thm:hprincPxi}
Let $P_\xi$  be a property that admits an isocontact realization and behaves well under inclusions. Embeddings satisfying a property $P_\xi$  satisfy a parametric and relative to the parameter $h$-principle in the following two cases:
\begin{itemize}
\item We consider parametric families of embeddings whose complement is overtwisted.
\item The isocontact realization is of codimension two at least. In this case, the $h$-principle is also $C^0$-dense.
\end{itemize}
The $h$-principle is also relative to the domain if given a formal solution of $P_\xi$ that is already genuine near a closed subset $A\subset N$, there is some $R_t$ as in Definition \ref{def:isoext} that is constant in a neighborhood of $A$ inside $E$.
\end{theorem}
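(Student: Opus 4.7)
The plan is to reduce the $h$-principle for $P_\xi$ to the known $h$-principles for isocontact embeddings, using the two hypotheses as the two rungs of the reduction. Starting from a formal $P_\xi$-solution $(e, F_t)$, the isocontact realization hypothesis furnishes an open submanifold $E \supset N$ of some codimension $2k$, a contact structure $\xi'$ on $E$ under which $N \subset E$ is already a genuine $P_{\xi'}$-solution, and a formal isocontact embedding $R_t : (E, \xi') \hookrightarrow (M, \xi)$ whose restriction to $N$ is homotopic to $F_t$ through formal $P_\xi$-solutions. We then apply an isocontact $h$-principle to $R_t$ to produce a genuine isocontact embedding $\tilde R : (E, \xi') \hookrightarrow (M, \xi)$, and the \emph{behaves well under inclusions} hypothesis, applied to the genuine $P_{\xi'}$-solution $N \subset (E, \xi')$ together with the genuine isocontact embedding $\tilde R$, produces a genuine $P_\xi$-solution realised by the composition $\tilde R|_N$. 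Concatenating the three homotopies (from the realization, from the isocontact $h$-principle, and from the functoriality) yields the required homotopy from $(e, F_t)$ through formal $P_\xi$-solutions.

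The choice of isocontact $h$-principle in the middle step distinguishes the two cases of the theorem. In case (2), where the realization has codimension $2k \geq 2$, Theorem \ref{thm:isoc} in its open codimension $\geq 2$ form applies to $R_t$ directly, since $N$ being a core makes $E$ an open manifold in the $h$-principle sense. The $C^0$-density near the core guaranteed by Theorem \ref{thm:isoc} yields $C^0$-closeness of $\tilde R|_N$ to $e$, giving the $C^0$-dense claim. In case (1) the codimension of the realization is unrestricted. If $2k \geq 2$ we again invoke Theorem \ref{thm:isoc}; if $2k = 0$, then $E$ is an open subset of $M$, and we first shrink $E$ to a sufficiently small tubular neighborhood of $e(N)$ so that $M \setminus \overline E$ still contains the overtwisted disk witnessing the overtwistedness of $M \setminus e(N)$, whereupon Corollary \ref{cor:BEM} applies.

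The parametric and relative-to-parameter versions follow by invoking the parametric forms of the isocontact realization (required in the hypothesis) together with the parametric forms of Theorem \ref{thm:isoc} and Corollary \ref{cor:BEM}. For the relative-to-domain version, the extra hypothesis guarantees that $R_t$ is constant, and hence genuine, on a neighborhood $U$ of $A$ in $E$; one then applies the isocontact $h$-principle relative to $U$. In the codimension $\geq 2$ setting this is legitimate because $N$ being a core of $E$ forces $E \setminus U$ to have only open components, satisfying the hypothesis of Theorem \ref{thm:isoc}.

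The main obstacle I foresee is the codimension-zero overtwisted case: one must argue that $E$ can be shrunk parametrically to keep the overtwisted disks of $M \setminus e(N)$ in the complement, compatibly with the parametric (or fixed) family of overtwisted disks described in Remark~\ref{rm:OTcomp}. A secondary technical point is the concatenation of the three homotopies into a single formal $P_\xi$-homotopy; this requires the parametric/relative forms of both hypotheses to interact cleanly, but it is routine given their careful formulations.
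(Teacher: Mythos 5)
Your proposal follows essentially the same route as the paper: use the isocontact realization to produce $(E,\xi')$ with $N$ a genuine $P_{\xi'}$-solution, apply Corollary \ref{cor:BEM} in the codimension-zero overtwisted case and Theorem \ref{thm:isoc} (open, codimension $\geq 2$, $C^0$-close near the core $N$) in the small case, and then transfer the solution back via the \emph{behaves well under inclusions} property while tracking the formal homotopy class through $R_t|_N \sim F_t$. The technical points you flag (shrinking $E$ to keep the overtwisted disk in the complement, and that $E\setminus U$ has only open components because $N$ is a core) are exactly the ones the paper addresses, so the argument is correct.
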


\begin{Remark}\label{rm:OTcomp}
We say that a parametric family of embeddings has an overtwisted complement if they are disjoint with a fixed overtwisted disk (in each connected component of the complement of the embedding). One can also require the weaker condition that a parametric family avoids a continuous parametric family of overtwisted disks as explained in \cite[Appendix A.1]{CdPP}. 
\end{Remark}

\begin{proof}
We will only prove the non-parametric case, the proof readily generalizes to the parametric case.\\

\textbf{Case 1: The complement of the embedding is overtwisted.} Let $e\colon N\longrightarrow (M,\xi)$ be a formal solution of $P_\xi$, we have a family of monomorphisms 
$$ F_t\colon TN \longrightarrow TM $$
such that $F_0=de$ and $F_1(TN)$ satisfies $P_\xi$. Let us first prove that $e$ is isotopic to another embedding $e_1$ which is a genuine solution of $P_\xi$. We will then check that this isotopy is through formal solutions of $P_\xi$.
 
By hypothesis, there is an open submanifold $E$ containing $e(N)$, equipped with a contact structure $\xi'=\ker \alpha'$ for which $e(N)\subset (E,\xi')$ is a genuine solution of $P_{\xi'}$ and the trivial embedding $\tilde e\colon E\longrightarrow M$ is a formal isocontact embedding with family of monomorphisms
$$ R_t\colon TE \longrightarrow TM, $$
i.e. we have $R_0=d\tilde e_0$, $\xi'=R_1^{-1}(\xi)$ and $R_t$ is a symplectic bundle map. We assume that $E$ is of codimension zero, since otherwise, we are in Case 2. Applying Corollary \ref{cor:BEM} we find an isotopy $\tilde e_s\colon (E,\xi') \longrightarrow (M,\xi)$ of formal isocontact embeddings, with $\tilde e_0=\tilde e$, endowed with a one-parametric family of homotopies of bundle monomorphisms
$$ G_{t,s}\colon TE \longrightarrow TM, $$
based at $\tilde e_s$ and such that $G_{t,0}=R_t$ and $G_{t,1}\equiv de_1$. Since $E$ is of codimension zero, $G_{t,s}$ defines a family of isomorphisms from $TM|_E=TE$ to $TM|_E$, which satisfies furthermore $(G_{t,1})^{-1}(\xi)= \xi'$ as symplectic bundles. Define $e_s=\tilde e_s|_N\colon N \rightarrow (M,\xi)$. Since $N$ inside $(E,\xi')$ satisfies $P_{\xi'}$ and $P_\xi$ behaves well under inclusions, we deduce that $G_{t,s}|_{TN}$ endows $e_s$ with a structure of formal solution to $P_\xi$. In addition, the pair $(e_1, G_{t,1}|_TN)$ is a genuine solution of $P_\xi$, since it corresponds to the natural extension of the genuine solution $e_1$ restricted in the target to $(\tilde e_1(E), \tilde e_1^*\xi=\xi')$. Finally, by hypothesis, the parametric families of isomorphisms $(e_0,G_{t,0})=(e_0,R_t|_N)$ and $(e_0, F_t)$ are homotopic as formal solutions of $P_\xi$, so $(e_0,F_t)$ and $(e_1,G_{t,1})$ are homotopic as formal solutions. This concludes the proof in the non-parametric case. 

The same proof holds parametrically since Corollary \ref{cor:BEM} is parametric. Under the assumption required in Definition \ref{def:isoext} that $R_t\equiv de$ near a closed subset $A\subset N$, we apply Corollary \ref{cor:BEM} relative to the domain (that holds because Theorem \ref{thm:BEM} is relative to the domain), thus proving that the $h$-principle is relative to the domain.\\

\textbf{Case 2: The isocontact realization is of codimension two at least.} Let $N$ be a formal solution of $P_{\xi}$ and $E$ embedded in $M$ by 
$$ \tilde e\colon E\longrightarrow M,$$
such that $\tilde e|_N=e$. By hypotheses, the embedding $e$ restricted in the target to $E$ is a genuine solution to $P_{\xi'}$ for some contact structure $\xi'$ in $E$, and $\tilde e$ is covered by a family of monomorphisms $ R_t\colon TE \longrightarrow TM$ as in Definition \ref{def:isoext}.  As before, we will prove that $e\colon N\longrightarrow M$ is isotopic through formal solutions to a genuine solution of $P_\xi$, and the proof generalizes to parametric families of embeddings.  By Theorem \ref{thm:isoc}, there is an isotopy $\tilde e_s\colon E \rightarrow M$ with $s\in [0,1]$ of formal isocontact embeddings of $(E,\xi')$ into $(M,\xi)$ such that $\tilde e_1$ is a genuine isocontact embedding. Furthermore, we can assume that $\tilde e_s$ is $C^0$-close to $\tilde e_0$ near $N$, which is a core of $E$. Hence, there is a family of monomorphisms (covering $\tilde e_s$)
$$ G_{t,s}\colon TE \longrightarrow TM, \enspace \text{for } t,s \in [0,1] $$
such that $\xi'={G_{t,1}}^{-1}(\xi)$ as a symplectic vector bundle, and we have $G_{t,0}=R_t$ and $G_{t,1}=d\tilde e_1$ for all $t \in [0,1]$. Since $e_0$ restricted to $E$ in the target satisfies $P_{\xi'}$, we deduce that $G_{t,s}|_N$ endows $\tilde e_t|_N$ with a formal solution of $P_{\xi}$ using that $P_\xi$ behaves well under inclusions. For $t=1$ we find a genuine solution $\tilde e_1|_N$ that is $C^0$-close to $e$. Finally, since $(\tilde e_0, G_{t,0})|_N=(\tilde e_0, R_t)|_N$ is formally homotopic to $(e_0, F_t)$ by hypothesis, the formal class is preserved and we conclude the proof of the existence $h$-principle. The relative to a domain $h$-principle also holds under the required hypotheses. Indeed, if $R_t \equiv d\tilde e$ on a neighborhood  $U$ of $A$ inside $E$, and $A$ lies in the core of $E$, it is clear that $E\setminus A$ has only open components. Then Theorem \ref{thm:isoc} holds relative to the domain and proves the statement.
\end{proof}

 Examples of differential relations defined over embeddings that admit an isocontact realization and behave well under inclusions include subcritical isotropic embeddings in any contact manifold \cite[Theorem 12.4.1]{hprinc}, Legendrian embeddings whose complement is overtwisted \cite{M,MNPS, BEM}, iso-Reeb embeddings in overtwisted manifolds \cite[Theorem 5.7]{CMPP1}, or small iso-Reeb embeddings into any contact manifold \cite[Theorem 5.9]{CMPP1} (see \cite[Definitions 3.6 and 5.8]{CMPP1} for the definitions of iso-Reeb and small iso-Reeb embedding). One can also define generalized isocontact embeddings (defined analogously to generalized isocontact immersions \cite[Section 16.2]{hprinc}) and their small counterpart. Those would also satisfy the hypotheses of Theorem \ref{thm:hprincPxi} by adapting the arguments in \cite{CMPP1}. An analogous of Theorem \ref{thm:hprincPxi} holds for immersions instead of embeddings too, the proof being simpler since we do not need to follow a tangential rotation but just a monomorphism of $TN$ into $TM$, see Remark \ref{rem:imm} in the next section.  The following statement is another corollary of this general framework, whose proof will set up ideas on how isocontact realizations can be constructed. It was recently applied in \cite{Av}.
\begin{corollary}\label{cor:is2}
Isocontact embeddings of codimension two closed manifolds whose complement is overtwisted into a contact manifold $(M,\xi)$ satisfy a full $h$-principle.
\end{corollary}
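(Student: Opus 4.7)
My approach is to apply Theorem~\ref{thm:hprincPxi} Case~1 to the property $P_\xi = $ ``being a genuine isocontact embedding of $N$ into $(M,\xi)$''. The well-behaved-under-inclusions hypothesis is tautological: composing two (formal or genuine) isocontact embeddings gives a (formal or genuine) isocontact embedding, because the conformal symplectic structures on the contact distributions pull back consistently through the two maps. The real content is therefore the construction of an isocontact realization of codimension zero, which is the setting in which Case~1 applies.

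Given a formal isocontact embedding $(e, F_t)\colon (N,\xi_N) \to (M,\xi)$ of codimension two, the bundle monomorphism $F_1$ yields a symplectic splitting $\xi|_{e(N)} = F_1(\xi_N) \oplus \nu$, where the rank-two symplectic bundle $\nu := F_1(\xi_N)^{\omega_\xi}$ serves as a normal bundle. I identify a tubular neighborhood $E \subset M$ of $e(N)$ with a disk subbundle of $\nu$, so that $E$ has codimension zero in $M$ while $N$ remains a core. I endow $E$ with the standard contact structure $\xi' = \ker \alpha'$ of a codimension-two contact neighborhood: fix a contact form $\alpha_N$ for $\xi_N$ and a Hermitian connection $A$ on $\nu$, and set $\alpha' = \alpha_N + r^2 A$ in fiber polar coordinates. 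For small $r$ this is a contact form, and it restricts to $\alpha_N$ on the zero section, so $N \subset (E,\xi')$ is a genuine codimension-two isocontact embedding, i.e.\ a genuine $P_{\xi'}$-solution.

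To equip the inclusion $E \hookrightarrow M$ with a formal isocontact embedding structure, I construct a family $R_t\colon TE \to TM|_E$ as follows: on $N$, decompose $TE|_N = TN \oplus \nu$ and $TM|_N = F_1(TN) \oplus \nu$, and set $R_t := F_t \oplus \operatorname{id}_\nu$; extend smoothly to a neighborhood using the tubular identification. Then $R_0$ is the identity, $R_1$ carries $\xi'|_N = \xi_N \oplus \nu$ symplectically onto $\xi|_N = F_1(\xi_N) \oplus \nu$, and $R_t|_{TN} = F_t$, so the homotopy requirement of Definition~\ref{def:isoext} is satisfied tautologically. As the complement of $N$ in $M$ is overtwisted, the same holds for the complement of a sufficiently small tubular neighborhood $E$, so Theorem~\ref{thm:hprincPxi} Case~1 applies and delivers the parametric and relative-to-the-parameter $h$-principle. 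For the relative-to-the-domain statement, on any closed subset $A \subset N$ where the formal embedding is already isocontact, one may take $\xi' = \xi$ and $R_t \equiv \operatorname{id}$ on a neighborhood of $A$, fulfilling the additional hypothesis of Theorem~\ref{thm:hprincPxi}.

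The main technical obstacle is bookkeeping: ensuring that the local model $\alpha' = \alpha_N + r^2 A$, the symplectic complement $\nu$, and the tubular identification can be chosen smoothly and coherently in parametric families and relative to the already-genuine region $A$. This is standard and follows from parametric and relative versions of the contact neighborhood theorem for codimension-two contact submanifolds.
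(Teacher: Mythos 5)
Your overall strategy is the same as the paper's: show that the relation behaves well under inclusions and admits a codimension-zero isocontact realization on a tubular neighborhood $E$ of $e(N)$, then invoke Case 1 of Theorem \ref{thm:hprincPxi}. The execution, however, has concrete gaps. First, $\nu$, the $d\alpha$-symplectic orthogonal of $F_1(\xi_N)$ in $\xi|_N$, is a complement of $F_1(TN)$ in $TM|_N$, not of $TN=de(TN)$; since $F_1\neq de$ in general, $\nu$ may intersect $TN$ nontrivially, so the decomposition $TE|_N=TN\oplus\nu$ and the identification of a tubular neighborhood of $e(N)$ with a disk subbundle of $\nu$ are not available as stated. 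Second, even granting that splitting, $R_t:=F_t\oplus\operatorname{id}_\nu$ need not be injective for intermediate $t$, because only $F_1(TN)$ is known to be transverse to $\nu$. Third, $R_1=F_1\oplus\operatorname{id}_\nu$ is conformally symplectic only if the conformal factor of $F_1$ on $\xi_N$ agrees with the ratio between the fiberwise area form induced by $d\alpha'$ on $\nu$ and $d\alpha|_\nu$; this requires an extra rescaling. Finally, and most substantially, Definition \ref{def:isoext} requires $R_1^{-1}(\xi)=\xi'$ conformally symplectically on all of $E$, not just along $N$; ``extend smoothly using the tubular identification'' does not produce this, since away from the zero section there is no a priori relation between $\ker(\alpha_N+r^2A)$ and $\xi$.

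The paper's proof avoids all four issues by reversing the order of operations: it first extends $F_t$ to ambient isomorphisms $G_t\colon TM|_N\to TM|_N$ and pulls back, obtaining a homotopy of formal contact structures $(\xi_t,\omega_t)=(G_t^{-1}(\xi),d\alpha\circ G_t)$ with $(\xi_1,\omega_1)|_{TN}=(\xi_N,d\beta)$. The fibers of $E$ are then the symplectic orthogonal of $\xi_N$ inside $\xi_1$ (which genuinely complements $TN$), the germ of contact structure $\xi_2$ is produced by \cite[Lemma 16.2.2]{hprinc} so that it agrees with $(\xi_1,\omega_1)$ on all of $TM|_N$ (whence the two are homotopic as formal contact structures over $E$), and $R_t$ is obtained from this homotopy via the stability Lemma \ref{lem:iso} rather than by an ad hoc direct sum. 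Your explicit local model $\alpha_N+r^2A$ would serve the same purpose as the cited lemma if you first performed the pullback by $G_t$; as written, the argument does not close.
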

The proof works for isocontact embeddings of any positive codimension and of possibly open manifolds, but for codimension higher than two or open codimension two embeddings, Theorem \ref{thm:isoc} is stronger as it does not require that the complement is overtwisted.
\begin{proof}
Let us show that isocontact embeddings (of any positive codimension) define a differential relation that behaves well under inclusions and that admits an isocontact realization of codimension zero. Let $F_t\colon TN\rightarrow TM$ be a formal isocontact embedding of $(N,\xi_N)$ into $(M,\xi)$, covering an embedding $e\colon N\rightarrow M$. First, observe that if $(M,\xi)$ is a contact submanifold of a contact manifold $(W,\xi_W)$ by some inclusion map $i\colon M \rightarrow W$, then the monomorphisms $F_t$ trivially extend to monomorphisms $G_t\colon TN\rightarrow TW$ such that $F_1$ restricts to a symplectic map from $\xi_N$ to $\xi_W$. Thus $i\circ e$ inherits naturally the formal isocontact embedding structure given by $G_t$, and if the original formal embedding is genuine then $F_t\equiv de$ and $G_t$ can be chosen to be constant and equal to $di\circ e$. This shows that the differential relation behaves well under inclusions. \\

To construct the isocontact realization, extend $F_t$ to a family of isomorphisms 
$$G_t\colon TM|_N \rightarrow TM|_N,$$
and consider $(\xi_t,\omega_t)=({G_t}^{-1}(\xi),  d\alpha \circ G_t)$, where $\xi=\ker \alpha$. The formal contact structure $(\xi_1,\omega_1)$ can be assumed to satisfy $(\xi_1,\omega_1)|_{TN}=(\xi_N,d\beta)$, where $\xi_N=\ker \beta$. However, the formal contact structure $(\xi_1,\omega_1)$ fails to be genuine in a neighborhood of $N$. Take a neighborhood $E$ of $N$ diffeomorphic to a neighborhood of the zero section of the normal bundle of $N$ in $M$, choosing as fibers the symplectic orthogonal of $\xi_N$ inside $\xi_1$. This endows the bundle $E$ with the hyperplane distribution $\xi_1$ given by the direct sum of $\xi_N$ and the fibers of $\pi\colon E\rightarrow N$. An application of \cite[Lemma 16.2.2]{hprinc} shows that there exists a genuine contact structure $(\xi_2,\omega_2)$ in $E$ (up to shrinking $E$ a bit) such that $(\xi_2,\omega_2)|_{TM|_N}=(\xi_1,\omega_1)|_{TM|_N}$. This last condition trivially shows that $(\xi_2,\omega_2)$ is homotopic as a formal contact structure to $(\xi_1,\omega_1)$, and the homotopy extends to all $M$ by the homotopy extension property. Applying Lemma \ref{lem:iso}, there is a family of symplectic bundle isomorphisms 
$$\hat R_t\colon \xi_2 \longrightarrow \xi_{2-t}, \enspace t\in [0,2],$$
which extends to a family of isomorphisms
$$R_t\colon TE \longrightarrow TM. $$
By construction $R_t$ endows the inclusion of $E$ inside $M$ with a formal isocontact embedding structure of $(E,\xi_2)$ into $(M,\xi)$. Furthermore $(N,\xi_N)$ is a genuine contact submanifold of $(E,\xi_2)$, and $R_t|_{TN}$ is homotopic to $F_t$ as a formal isocontact embedding structure, thus proving that $(E,\xi_2)$ together with $R_t$ is an isocontact realization of $(e,F_t)$ of codimension zero. The statement then follows from the first item in Theorem \ref{thm:hprincPxi}.
\end{proof}

\subsection{Embeddings in symplectic manifolds}

There are analogous properties for a differential relation defined over embeddings into a symplectic manifold $(M,\omega)$. Let $P_{\omega}$ be a differential relation depending on $\omega$, defined in $J^1(N,M)$ over embeddings of $N$ into $M$. A formal solution to such relation is a pair $(e, F_t)$ where $e$ is an embedding and $F_t\colon TN\longrightarrow TM$ is a family of monomorphisms covering $e$ such that $F_1$ satisfies $P_{\omega}$.

\begin{defi}\label{def:isosympext}
The property $P_\omega$ admits an \textit{isosymplectic realization} of codimension $2k$ if the following holds. Given a formal solution (respectively genuine), there is an open submanifold $E$ of codimension $2k$ in $M$ whose core is $N$, a symplectic structure $\omega'$ on $E$ such that $N\subset E$ satisfies $P_{\omega'}$, and a family of monomorphisms $R_t\colon TE\rightarrow TM$ endowing $(E,\omega')$ with a formal (respectively genuine) isosymplectic embedding structure into $(M,\omega)$. Furthermore, the family of monomorphisms $R_t|_N$ defines a formal solution to $P_\omega$ that is homotopic to $F_t$ through formal solutions.
\end{defi} 
 Again, intuitively, the isosymplectic realization property says that up to homotopy of non-degenerate two-forms starting at the ambient symplectic structure, there exists a germ of symplectic structure on a submanifold $E \supset N$ for which the embedding is a genuine solution. As before, we will require that the relation behaves well under inclusions.
\begin{defi}
A property $P_\omega$ defined on embeddings of $N$ into $(M,\omega)$ \textit{behaves well under inclusions} if given a formal (respectively genuine) isosymplectic embedding of $(E,\omega')$ into $(M,\omega)$, any formal (respectively genuine) solution of $P_{\omega'}$ naturally extends to a formal (respectively genuine) solution of $P_\omega$.
\end{defi}
We will assume that both properties hold parametrically.

\begin{theorem}\label{thm:hprincPw}
Let $P_\omega$ be a differential relation in $J^1(N,M)$ defined over embeddings of $N$ into $(M,\omega)$ that admits an isosymplectic realization of codimension at least two and behaves well under inclusions. Then embeddings satisfying $P_\omega$ satisfy a full $C^0$-dense $h$-principle. 
Given a formal solution of $P_\omega$ that is already genuine near a closed subset $A\subset N$, if we can take $R_t$ in Definition \ref{def:isosympext} which is constant in a neighborhood of $A$ inside $E$, then the $h$-principle holds also relative to the domain.
\end{theorem}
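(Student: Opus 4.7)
The plan is to mimic the argument of Case 2 in the proof of Theorem \ref{thm:hprincPxi}, replacing everything isocontact by isosymplectic and invoking Theorem \ref{thm:isosymp} in place of Theorem \ref{thm:isoc}. Given a formal solution $(e, F_t)$ of $P_\omega$, the isosymplectic realization hypothesis produces an open neighborhood $\tilde e\colon E\to M$ of $e(N)$ of codimension at least two (with $N$ as its core), a symplectic form $\omega'$ on $E$ such that $e(N)\subset (E,\omega')$ is a genuine solution of $P_{\omega'}$, and a family of monomorphisms $R_t\colon TE\to TM$ that endows $\tilde e$ with a formal isosymplectic embedding structure of $(E,\omega')$ into $(M,\omega)$, with $R_t|_{TN}$ homotopic to $F_t$ through formal solutions of $P_\omega$.

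Next, I would feed $(\tilde e, R_t)$ into Theorem \ref{thm:isosymp}: since $E$ is open of codimension at least two with core $N$, the $h$-principle for isosymplectic embeddings yields an isotopy $\tilde e_s\colon E\to M$ of formal isosymplectic embeddings with $\tilde e_0=\tilde e$ and $\tilde e_1$ genuinely isosymplectic, covered by a two-parameter family of monomorphisms $G_{t,s}\colon TE\to TM$ with $G_{t,0}=R_t$ and $G_{t,1}=d\tilde e_1$; moreover $\tilde e_1$ can be chosen $C^0$-close to $\tilde e$ near the core $N$. Restricting to $N$ and invoking the fact that $P_\omega$ behaves well under inclusions, $(e_s:=\tilde e_s|_N,\, G_{t,s}|_{TN})$ is a homotopy of formal solutions of $P_\omega$, whose endpoint $(e_1, G_{t,1}|_{TN})$ is genuine (it is the inclusion-extension of the genuine solution $e\colon N\to (E,\omega')$ of $P_{\omega'}$ via the genuine isosymplectic embedding $\tilde e_1$). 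Concatenating with the homotopy between $(e,R_t|_{TN})$ and $(e,F_t)$ provided by the realization hypothesis gives the desired path of formal solutions ending at a genuine one, and the $C^0$-closeness of $\tilde e_1$ to $\tilde e$ near $N$ delivers the $C^0$-density.

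The parametric and relative-to-parameter versions are immediate since both input hypotheses (isosymplectic realization and well-behaved under inclusions) are assumed parametric, and the isosymplectic $h$-principle of Theorem \ref{thm:isosymp} holds in those forms. For the relative-to-domain version, I would use the extra hypothesis that $R_t$ can be chosen constant in a neighborhood $U\subset E$ of a closed set $A\subset N$ near which $(e,F_t)$ is already genuine: then the formal isosymplectic embedding $(\tilde e, R_t)$ is already holonomic on $U$, and because $A$ lies in the core $N$ of the open manifold $E$, the complement $E\setminus A$ has only open components, so Theorem \ref{thm:isosymp} applies relative to $U$, and the resulting homotopy restricts to the trivial homotopy on $A$.

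The most delicate step is the last one, namely ensuring that the relative-to-domain form of Theorem \ref{thm:isosymp} is applicable in the form needed here: one must verify that the chosen neighborhood $E$ of $N$ can be taken so that $A$ sits in the core and $E\setminus A$ has only open components, which is a mild topological arrangement rather than a deep issue, but is the point where one has to be careful. The rest of the argument is essentially a diagram-chase through the two compatibility conditions, exactly parallel to Case 2 of the contact statement already proved.
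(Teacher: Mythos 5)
Your proposal is correct and follows essentially the same route as the paper's proof: the paper likewise transplants Case 2 of Theorem \ref{thm:hprincPxi}, feeding the isosymplectic realization $(\tilde e, R_t)$ into Theorem \ref{thm:isosymp} (open case, codimension at least two, $C^0$-close near the core $N$), restricting to $N$ via the well-behaved-under-inclusions property, and handling the relative-to-domain case by noting that $E\setminus A$ has only open components when $A$ lies in the core. No gaps; your treatment of the relative-to-domain step matches the paper's.
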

\begin{proof}
The proof is completely analogous to the contact case. Let $e\colon N\longrightarrow M$ be a formal solution of $P_\omega$, i.e. $e$ is covered by a family of monomorphisms $F_t\colon TN \longrightarrow TM$ covering $e$ such that $F_0=de$ and $F_1$ satisfies $P_\omega$. By hypothesis, there is an open embedded submanifold $\tilde e\colon E\longrightarrow M$ endowed with a symplectic structure $\omega'$ such that $\tilde e|_N=e$ satisfies $P_{\omega'}$, and a formal isosymplectic embedding structure covering $\tilde e$
$$R_t\colon TE \longrightarrow TM, $$
such that $R_t|_N$ is homotopic to $F_t$ as a formal solution of $P_\omega$. By Theorem \ref{thm:isosymp}, there is a family of formal isosymplectic embeddings $\tilde e_s\colon (E,\omega') \longrightarrow (M,\omega)$ such that $\tilde e_1$ is a genuine isosymplectic embedding and $\tilde e_s$ is $C^0$-close to $e$ near $N$. Since $P_\omega$ behaves well under inclusions, and the inclusion of $N$ into $(E,\omega')$ satisfies $P_{\omega'}$, we deduce that $\tilde e_s|_N$ is an isotopy of formal solutions of $P_\omega$ such that $\tilde e_1|_N$ is genuine. By the same arguments that in the contact case, the formal class is preserved and the $h$-principle is relative to the domain.
\end{proof}

\begin{Remark}\label{rem:imm}
All the definitions in this section, either for contact or symplectic manifolds, can be stated for immersions instead of embeddings. Then some versions of the $h$-principle are satisfied as well, by adapting the proof of Theorems \ref{thm:hprincPxi} and \ref{thm:hprincPw}, using the $h$-principle for isocontact or isosymplectic immersions \cite[Theorems 16.1.2 and 16.4.3]{hprinc}. Examples of differential relations for immersions that fall in this framework are generalized isocontact immersions \cite[Section 16.2]{hprinc}, generalized isosymplectic immersions \cite[Section 16.5]{hprinc}, Legendrian or isotropic immersions in contact manifolds \cite[Theorem 16.1.2]{hprinc}.
\end{Remark}

\section{Transverse embeddings into contact manifolds}\label{s:trans}

Our goal is to understand when a smooth embedding into $(M,\xi)$ can be isotoped to one which is transverse to $\xi$. For this, we define first the formal counterpart of a transverse embedding.
\begin{defi}
Let $(M,\xi)$ be a contact manifold. An embedding $e\colon N \rightarrow (M,\xi)$ is a \textit{formal transverse embedding} if there is a homotopy of monomorphisms $F_t\colon TN \rightarrow TM$ covering $e$ such that $F_0=de$ and $F_1(TN) \pitchfork \xi$. 
\end{defi}

In terms of the $h$-principle, the natural existence question asks whether a given formal embedding is isotopic to a genuine transverse embedding (through formal transverse embeddings). 
\subsection{Ambient overtwisted contact manifold}

If the target manifold $(M,\xi)$ is overtwisted, we can prove a complete $h$-principle for transverse embeddings whose complement is overtwisted. 

\begin{theorem}\label{thm:OTtran}
Transverse embeddings of codimension at least one into $(M,\xi)$ whose complement is overtwisted satisfy a full $h$-principle. 
\end{theorem}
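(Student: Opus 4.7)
The plan is to place the transverse embedding problem inside the general framework of Theorem \ref{thm:hprincPxi}. Concretely, I would show that the differential relation whose solutions are transverse embeddings (a) behaves well under inclusions and (b) admits an isocontact realization of codimension zero. The overtwisted complement hypothesis then places us in the first case of Theorem \ref{thm:hprincPxi}, yielding the full parametric, relative-to-the-parameter and relative-to-the-domain $h$-principle.

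Well-behavedness under inclusions is the easy part. Given a formal isocontact embedding $\tilde e\colon (E,\xi')\hookrightarrow (M,\xi)$ covered by $R_t\colon TE\to TM$ and a formal transverse embedding $(e,F_t)$ of $N$ into $(E,\xi')$, I would take the composition $(R_t\circ F_t)\colon TN\to TM$ as the induced formal transverse embedding in $(M,\xi)$. Transversality at $t=1$ is preserved because $R_1\bigl(F_1(TN)+\xi'\bigr)=R_1(TE)=TM|_{\tilde e(E)}$ and $R_1$ sends $\xi'$ onto $\xi|_{\tilde e(E)}$ as symplectic subbundles, so $R_1(F_1(TN))+\xi=TM$ along $\tilde e(E)$. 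The genuine case is the specialization in which $F_t$ is constant.

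The bulk of the work is building the isocontact realization, and I would mimic the construction in the proof of Corollary \ref{cor:is2}. Starting from $(e,F_t)$, I would first extend $F_t$ to a family of automorphisms $G_t\colon TM|_N\to TM|_N$ covering the identity on $N$; such an extension exists since the quotient $TM|_N/F_t(TN)$ is a homotopy of bundles starting at the normal bundle of $N$ and so is constant in isomorphism type. Writing $\xi=\ker\alpha$, the pair
$$ (\xi_1,\omega_1) \;=\; \bigl(G_1^{-1}(\xi),\, d\alpha\circ G_1\bigr) $$
defines a formal contact structure along $N$ for which the transversality condition $F_1(TN)\pitchfork\xi$ is equivalent to $TN\pitchfork\xi_1$. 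I would then extend $(\xi_1,\omega_1)$ to a formal contact structure on a tubular neighborhood $E$ of $N$ (choosing the fibers to be symplectic orthogonals inside $\xi_1$), apply \cite[Lemma 16.2.2]{hprinc} to integrate it to a genuine contact structure $(\xi_2,\omega_2)$ on a possibly smaller $E$ that agrees with $(\xi_1,\omega_1)$ on $TM|_N$, and use Lemma \ref{lem:iso} to obtain a family $R_t\colon TE\to TM|_E$ endowing the inclusion of $E$ with a formal isocontact embedding structure whose restriction to $N$ is homotopic to $F_t$ as formal transverse embeddings. Since $\xi_2|_N=\xi_1|_N\pitchfork TN$ by construction and transversality is an open condition, shrinking $E$ a final time ensures that $N\hookrightarrow (E,\xi_2)$ is genuinely transverse.

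The main obstacle, as in the proof of Corollary \ref{cor:is2}, is setting things up so that \cite[Lemma 16.2.2]{hprinc} can be invoked to integrate the formal contact structure along $N$ to a genuine contact structure on $E$; the preservation of transversality on $E$ is then automatic from openness together with the boundary condition along $N$. Once (a) and (b) are in place, Theorem \ref{thm:hprincPxi} Case 1 yields the parametric and relative-to-the-parameter $h$-principle, and the relative-to-the-domain statement follows by arranging the construction to be trivial on a neighborhood of any closed subset where the starting formal embedding is already genuinely transverse, so that the $R_t$ produced by Lemma \ref{lem:iso} can be chosen to equal $d\tilde e$ there.
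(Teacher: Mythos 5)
Your overall strategy (reduce to Theorem \ref{thm:hprincPxi} by checking well-behavedness under inclusions and constructing a codimension-zero isocontact realization) is exactly the paper's, and your treatment of well-behavedness and of the extension of $F_t$ to automorphisms $G_t$ is fine. But there is a genuine gap at the central step: you propose to invoke \cite[Lemma 16.2.2]{hprinc} to integrate the formal contact structure $(\xi_1,\omega_1)$ to a genuine contact structure on a tubular neighborhood $E$ of $N$ agreeing with it along $TM|_N$. That lemma is a formal contact neighborhood extension result: its hypothesis is that the data along $N$ is already holonomic, i.e.\ that $(\xi_1,\omega_1)$ restricts on $TN$ to a genuine contact structure $(\ker\beta, d\beta)$ on $N$. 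This is what happens in Corollary \ref{cor:is2}, where $N$ is a formal \emph{isocontact} embedding and carries its own contact structure $\xi_N$. For a merely \emph{transverse} embedding, $F_1(TN)\cap\xi_1$ is just some distribution on $N$ with a fiberwise nondegenerate (but in no way closed or integrable) two-form; there is no genuine structure along $N$ to extend, so the lemma simply does not apply. You acknowledge this is ``the main obstacle'' but offer no mechanism to overcome it, and openness of transversality does not help: the problem is producing \emph{any} germ of genuine contact structure near $N$ in the correct formal class, not preserving transversality afterwards.

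The paper's resolution (flagged in Remark 4.3 as the point where new ideas enter) is a two-stage integration that supplies the missing holonomic input. First, one chooses a hypersurface $W\supset N$ transverse to $\xi_1$ (after splitting off a trivial line subbundle $L$ of the normal bundle; the general case needs an extra covering-and-iteration argument over balls trivializing the normal bundle, your proposal's Step analogue of the paper's Step 5). The restriction of $(\xi_1,\omega_1)$ to $W$ is a \emph{formal even contact structure}, and McDuff's convex-integration $h$-principle for even contact structures (Theorem \ref{thm:hprincEven}, with the $C^0$-control of Remark \ref{rem:C0even} to keep transversality to $N$) produces a genuine even contact form $\gamma$ near $N$ in $W$. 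Second, one contactizes explicitly on the normal bundle of $W$: writing $\omega_2=dz\wedge\zeta+d\tilde\gamma$, the form $\lambda=\tilde\gamma+z\zeta$ is contact near $W$ and its kernel agrees with $\xi_2$ along $W$, hence stays transverse to $N$ and lies in the right formal class. Only after this does one run your Step 3 (Lemma \ref{lem:iso} to build $R_t$) and the relative-to-the-domain bookkeeping, which in the paper also requires a partition-of-unity interpolation of contact forms near the set $A$ rather than merely ``arranging the construction to be trivial'' there. Without replacing your appeal to \cite[Lemma 16.2.2]{hprinc} by such an argument, the proof does not go through.
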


The proof of Theorem \ref{thm:OTtran} consists in showing that the differential relation $\mathcal{R}_\pitchfork(\xi)$ of embeddings transverse to $\xi$ satisfies the hypotheses of Theorem \ref{thm:hprincPxi}.

\begin{Remark}
An important step of the proof is showing that given a corank one symplectic distribution transverse along a submanifold $N$, there is a germ of contact distribution near $N$ that is homotopic to the symplectic distribution and $C^0$-close (only at the level of real distributions) to the one that we started with. The fiberwise symplectic structure of the contact germ will not, in general, be close to the original fiberwise symplectic structure. We believe that there is an alternative way of proving this fact by modifying the proof of the holonomic approximation lemma \cite{hprinc} applied to the existence of contact structures on open manifolds. We use a direct approach instead, using the $h$-principle for even contact structures.
\end{Remark} 

\begin{proof}[Proof of Theorem \ref{thm:OTtran}]
For any contact manifold $(M,\xi)$, the partial differential relation $\mathcal{R}_\pitchfork(\xi)$ clearly behaves well under inclusions, so we only need to prove that it admits an isocontact realization to apply Theorem \ref{thm:hprincPxi}. We will show first, in the non-parametric case, that $\mathcal{R}_\pitchfork(\xi)$ admits an isocontact realization of codimension zero. Let $N$ be a closed manifold of dimension $k$ and $e\colon N\rightarrow (M,\xi)$ an embedding of $N$ into an overtwisted contact manifold $(M,\xi)$ endowed with a formal transverse embedding structure $F_t\colon TN\rightarrow TM$. Extend the family of monomorphisms $F_t$ to a family of isomorphisms $G_t\colon TM \rightarrow TM$ satisfying $F_t= G_t \circ F_0$. Define $\xi_{t}=G_t^{-1}(\xi)$ and $\omega_t=d\alpha \circ G_t$. Then $(\xi_t,\omega_t)$ defines a homotopy of formal contact structures such that  $\xi_1\pitchfork TN$.\\

 To set up ideas, we will now make a simplifying assumption and later discuss the general case. This assumption is that the normal bundle of $e(N)$ admits a trivial section. This is trivially true if $N$ is of codimension one and holds as well if the codimension of the embedding is high enough. \\

\textbf{Step 1: A formal contact structure that is (genuinely) even contact along a hypersurface $\mathbf{W\supset N}$.} Let $\nu$ denote a normal bundle over $N$ lying in $\xi_1$, so that $TM|_N=TN \oplus \nu$. By assumption, there exists some line field $L$ such that $\nu=L\oplus \nu'$. Using the exponential map along the fibers of $\nu'$, we construct a small enough codimension one submanifold $W$ containing $N$ and such that $\xi_1\pitchfork W$. The symplectic distribution $\xi_1$ splits near $W$ as $\xi_1=\tilde \xi \oplus L$, where $\tilde \xi=\xi_1\cap TW$. Furthermore, the fiberwise symplectic structure $\omega_1$ has a one dimensional kernel when restricted to $\tilde \xi$, so there is another splitting $\tilde \xi= \xi'\oplus V$ where $V=(\ker \omega_1|_{\tilde \xi})$. This induces a symplectic splitting
\begin{equation}\label{eq:split}
 (\xi_1, \omega_1)|_{W}= (\tilde \xi \oplus L, \omega_1)=( \xi', \tilde \omega|_{\xi'}) \oplus (V\oplus L, \omega_1'),
\end{equation}
where $\omega_1'$ is some rank $2$ two-form that is fiberwise non-degenerate on $V\oplus L$. The pair $(\tilde \alpha, \tilde \omega)$ on $W$, where $\tilde \alpha$ is some one form such that $\ker \tilde \alpha=\tilde \xi$, defines a formal even contact structure satisfying $\tilde \xi \pitchfork N$. We will now make $\tilde \xi$ a genuine even contact structure that remains transverse to $N$.

We apply Theorem \ref{thm:hprincEven} in a neighborhood of $N$ inside $W$, and extend the resulting homotopy using the extension property, obtaining a homotopy of formal even-contact structures $(\tilde \xi_t, \tilde \omega_t)$ in $W$  with $t\in [1,2]$ such that 
\begin{itemize}
\item it is constant outside a small neighborhood of $N$ inside $W$,
\item $(\tilde \xi_1, \tilde \omega_1)=(\tilde \xi, \tilde \omega)$,
\item $(\tilde \xi_2=\ker \gamma, \tilde \omega_2=d\gamma)$ is a genuine even contact structure near $N \subset W$,
\item By Remark \ref{rem:C0even}, we can choose $\tilde \xi_2$ to be $C^0$-close to $\tilde \xi_1$ as a distribution, and thus $\tilde \xi_2\pitchfork N$.
\end{itemize}

We will now use the splitting in Equation \eqref{eq:split} to extend the family $(\xi_t,\omega_t)$ in all $TM$ for values of the parameter $t$ in $[1,2]$ . Using Lemma \ref{lem:iso} we find a family of symplectic bundle isomorphisms 
$$ \tilde R_t\colon (\tilde \xi_1, \tilde \omega_1) \longrightarrow (\tilde \xi_t, \tilde \omega_t), $$
and extend it to a family of isomorphisms
$$ R_t\colon TM|_{W} \longrightarrow TM|_{W}. $$
We might do this, for example by imposing 
$$R_t(\ker \tilde \omega_1)|_{W}=\ker \tilde \omega_t|_{W}, \enspace \text{for } t\in [1,2]$$
and keeping $L$ fixed, i.e. $R_t(L)=L$.
Decomposing $\tilde \xi_t$ as $\xi'_t \oplus (\ker \tilde \omega_t\cap \tilde \xi_t)$, we extend the family of symplectic bundles to all $TM|_{W}$ as
\begin{equation} \label{eq:extensionEC}
 (\xi_t,\omega_t)|_{W}= ( \xi_t', \tilde \omega_t|_{\xi_t'}) \oplus (R_t(\ker \omega_1)\oplus L, \omega'_t), \quad t\in [1,2],
\end{equation}
where $\omega'_t=(R_t^{-1})^*\omega_1'$. Summarizing, we have constructed a pair $(\xi_2,\omega_2)$ homotopic to $(\xi_0,\omega_0)=(\xi,d\alpha)$ throught formal contact structures such that $\xi_2= \xi_2' \oplus R_2(\ker \omega_1 \oplus L)$, $\xi_2\pitchfork N$ and $(\tilde \xi_2=\ker \gamma, \omega_2|_{\tilde \xi_2}=d\gamma)$ is a genuine even-contact structure, where by construction $\tilde \xi_2=\xi_2'\oplus \ker \omega_2|_{TW}$.\\

\textbf{Step 2: Contactization of the genuine even-contact structure by a homotopy of formal contact structures.}
 Let $U$ be the normal bundle of $W$ in $M$ with fiber $L$,
\begin{center}
\begin{tikzcd}
 L \arrow[hookrightarrow]{r}{} & U \arrow[rightarrow]{d}{\pi}\\

& W
\end{tikzcd}
\end{center}
which is equipped with the symplectic corank 1 distribution $(\xi_2,\omega_2)$. It satisfies $\xi_2|_{TW}=\tilde \xi_2=\ker \gamma$ and $\omega_2|_{TW}=d\gamma$ by construction. We will now argue as in \cite[Lemma 16.2.2]{hprinc} to find a contact structure $\xi_3=\ker \lambda$ defined near the zero section of $U$, transverse to $N$, and such that $(\xi_3,d\lambda)$ is homotopic to $(\xi_2,\omega_2)$. Choose an isomorphism of $U\cong W\times \mathbb{R}$, and denote by $z$ the coordinate in $\mathbb{R}$. We identify a neighborhood of $W$ as a neighborhood of the zero section of $U$. The symplectic bundle $\xi_2= \xi_2' \oplus \ker \tilde \omega_2 \oplus L$ is equipped with the form $\omega_2$. This form can be written as
\begin{equation} \label{eq:sympbundle}
\omega_2= dz\wedge\zeta + d\tilde \gamma,
\end{equation}
where $\tilde \gamma=\pi^*\gamma$ and $\zeta \in \Omega^1(U)$. Using this form we can construct a germ of contact structure $\xi_3=\ker \lambda$ where 
\begin{equation}\label{eq:cont}
 \lambda=\tilde \gamma + z\zeta, 
\end{equation}
which further satisfies
$$ d\lambda= d\tilde \gamma + dz\wedge \zeta + z d\zeta.$$
Up to shrinking the neighborhood $U$ of $W$, the form $\lambda$ is of contact type. The symplectic distribution $(\xi_3, d\lambda)$ coincides with $(\xi_2, \omega_2)$ along $W$, given by $z=0$, so they are clearly homotopic as symplectic bundles close to $W$. Since $\xi_3$ coincides with $\xi_2$ as a distribution along $W$, it is satisfied that $\xi_3\pitchfork N$. We end up with a family of symplectic bundles $(\xi_t,\omega_t)$ with $t\in [0,3]$ on a neighborhood of $W$. By the homotopy extension property, we have a family of symplectic vector bundles $(\xi_t,\omega_t)$ of corank $1$ defined in all $M$, with $t\in [0,3]$, such that $\xi_3 \pitchfork N$ everywhere and $\xi_3$ is of contact type on some open neighborhood $E$ of $N$. \\

\textbf{Step 3: A formal isocontact embedding.} We claim that the trivial embedding $\tilde e\colon E \longrightarrow (M,\xi)$ of the neighborhood $E$ of $N$ inside $M$ is a formal isocontact embedding (of codimension zero) of $(E,\xi_3)$ into $(M,\xi)$. We know that $\xi_3$ is formally contact homotopic to $\xi$ along $E$ by construction. Hence using Lemma \ref{lem:iso} we find a family of symplectic bundle isomorphisms 
$$ Q_t\colon \xi_3 \longrightarrow \xi_{3-t}, \quad t\in [0,3],$$
where $\xi_0=\xi$. Such a family can be extended to a family of isomorphisms 
$$ R_t\colon TE \rightarrow TM|_E$$
satisfying $R_3^{-1}(\xi_0)=\xi_3$ and $R_3$ induces a symplectic map with respect to the symplectic bundle structures in $\xi_0,\xi_3$. Hence $E$ is endowed with a formal isocontact structure, satisfying the properties that make $\mathcal{R}_\pitchfork(\xi)$ a partial differential relation that admits an isocontact realization of codimension zero. \\

\textbf{Step 4: Relative to a domain $h$-principle.} Let us check that the isocontact realization can be constructed relative to the domain as in the hypotheses of Theorem \ref{thm:hprincPxi}. Assume that there exists a closed subset $A\subset N$ on which the formal transverse embedding is already transverse. Then $F_t$ is constant and equal to $de$ close to $A$. Then following the proof, by construction $(\xi_1,\omega_1)$ is just the ambient contact structure in an open neighborhood of $A\subset N \subset W$. This implies that the formal even contact structure $(\tilde \alpha, \tilde \omega)$ is genuine in an open neighborhood of $A$ inside $W$. Since the $h$-principle for even contact structures is relative to the domain, the homotopy $(\tilde \xi_t,\tilde \omega_t), t\in [1,2]$ is constant near a possibly smaller neighborhood of $A$ inside $W$. The same holds for $(\xi_t,\omega_t), t\in [0,2]$: it is constant in a neighborhood of $A$ inside $U$. Let $B\subset N$ be a slightly smaller neighborhood containing $A$ inside the neighborhood where the homotopy is constant. Choose an open cover $B_j$ of $W$ such that each $B_j$ intersecting $A$ is contained in $B$, and each $B_j$ intersecting $B$ is included in the neighborhood where the homotopy is constant. Take a partition of the unity $\chi_j$ subordinated to $B_j$ and define
$$ \alpha'= \sum_j \chi_j \alpha_j, $$
where $\alpha_j=\alpha$ (a contact form defining $\xi$) for each $B_j$ intersecting $A$, and $\alpha_j=\lambda$ as in Equation \eqref{eq:cont} in the normal bundle of $W$ if $B_j$ does not intersect $B$. Observe that at points which lie in $B \setminus A$, the distributions defined by each one of the contact forms coincide along $TW$, so $\ker \alpha'$ will define a contact structure transverse to $N$ everywhere, even at the points where we sum combinations of $\alpha$ and $\lambda$. This shows that $\alpha'$ defines a contact structure $\xi_{3}$, which coincides with $\xi$ near $A$, homotopic to $(\xi_{2},\omega_{2})$, and transverse to $N$. It follows that the formal isocontact embedding of $(E,\xi_3)$ obtained using $R_t$ as before is genuine near $A$. We conclude by Theorem \ref{thm:hprincPxi} that the formal transverse embedding is isotopic to a genuine one relative to the domain. The same proof applies parametrically and relative to the parameter. \\

\textbf{Step 5: General case.} To cover the general case, when the normal bundle of $N$ does not necessarily admit a trivial section, we do it by constructing the germ of contact structure $\xi_3$ on all $N$ by doing it on a finite covering of $N$ by balls $B_i\subset N$ trivializing the normal bundle of $N$. The first homotopy $(\xi_t,\omega_t)$ for $t\in [0,1]$ that we constructed did not require the simplifying assumption, so we have a formal contact structure $(\xi_1,\omega_1)$ that is transverse to $N$. In a neighborhood $U_1$ of $B_1$, understood as a neighborhood of the zero section of the normal bundle of $B_1$ in $M$, we can choose a hypersurface $W_1$ containing $B_1$ and such that $\xi_1 \pitchfork W_1$. The formal contact structure induces on $W_1$ a formal even-contact structure. Arguing as in Steps 1 and 2, we make this even contact structure a genuine one in $W_1$ and construct with it a germ of contact structure homotopic to $(\xi_1,\omega_1)$, and transverse to $B_1$. By the homotopy extension property of formal solutions, this produces a formal contact structure, that we denote by $(\xi_2,\omega_2)$, homotopic to $(\xi_1,\omega_1)$, everywhere transverse to $N$, and genuinely contact near a slightly smaller ball $B_1'\subset B_1$. In $B_2$, the formal contact structure $(\xi_2,\omega_2)$ is transverse to $B_2$ and genuinely contact in $B_2\cap B_1'$. Let $W_2$ be a hypersurface in a neighborhood $U_2$ of $B_2$ containing $B_2$ and transverse to $\xi_2$. The formal contact structure induces a formal even contact structure in $W_2$. Here comes the important observation: this formal even contact structure is already genuine in a neighborhood of $B_2\cap B_1'$ inside $W_2$, because $\xi_2$ is genuinely contact in a neighborhood of $B_1'$. Using that the $h$-principle for even-contact structures is relative to the domain, we can argue exactly as before to produce a formal contact structure $(\xi_3,\omega_3)$, transverse to $N$, and genuinely contact near a slightly smaller ball $B_2'\subset B_2$ and equal to $(\xi_2,\omega_2)$ in $B_1'$. Iterating this argument, we end up with a formal contact structure $(\hat \xi, \hat \omega)$, transverse to $N$ and genuinely contact in a neighborhood $E$ of $N$. The conclusion follows arguing as in Step 3 to show that $(E, \hat \xi)$ admits a formal isocontact embedding into $(M,\xi)$ that is an isocontact realization of the formal transverse embedding.
\end{proof}

In dimension three, the previous theorem contains in particular the following folklore result, see also \cite{Et}. Let $\mathcal{K}_{\perp}^{\Delta}(M,\xi)$ denote the space of transverse knots avoiding a fixed overtwisted disk $\Delta$ and 
$\mathcal{FK}_{\perp}^{\Delta}(M,\xi)$ the corresponding formal space. Then the inclusion 
$$ i\colon \mathcal{K}_{\perp}^{\Delta}(M,\xi)\longrightarrow \mathcal{FK}_{\perp}^{\Delta}(M,\xi) $$
is a weak homotopy equivalence. This is known to be false in three dimensions if one does not fix an overtwisted disk in the complement of the parametric families of knots. In contrast with the three-dimensional case, it was recently shown that in manifolds of dimension greater than three, knots transverse to a bracket-generating distribution (in particular, to contact structures) satisfy a full $h$-principle \cite{MdP}.
\subsection{A flexible subclass in the general case}

To be able to prove a full $h$-principle without any assumption on the target contact manifold, we need to impose a condition at the formal level that is reminiscent of small formal iso-Reeb embeddings \cite{CMPP1}. This will make it possible to apply the second case of Theorem \ref{thm:hprincPxi}.

\begin{defi}\label{def:smalltransv}
An embedding $e\colon N\rightarrow (M,\xi)$ of codimension $k\geq 3$ is called a \textbf{small} formal transverse embedding if $e$ is a formal transverse embedding such that
\begin{itemize}
\item the contact structure splits as $(\xi|_N, d\alpha|_N)=(\xi' \oplus C, d\alpha|_{\xi'} + d\alpha|_C)$, where $C$ is a proper symplectic subbundle,
\item $F_1(TN) \cap \xi= F_1(TN)\cap \xi' \subsetneq \xi'$,
\end{itemize}
\end{defi}
Parametric families of small embeddings are equipped with a parametric symplectic splitting as defined above.

\begin{theorem}\label{thm:smalltrans}
Small transverse embeddings satisfy a full $h$-principle.
\end{theorem}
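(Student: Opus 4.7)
The plan is to apply the second case of Theorem \ref{thm:hprincPxi}: I will show that the differential relation of small transverse embeddings behaves well under inclusions and admits an isocontact realization of codimension at least two. The smallness hypothesis is precisely what allows the isocontact realization to have positive codimension, in contrast with the codimension zero realization built in Theorem \ref{thm:OTtran}.

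The behavior under inclusions is essentially formal. Given an isocontact embedding $j\colon (E,\xi_E)\hookrightarrow (M,\xi)$ and a small formal transverse embedding $(e,F_t)$ of $N$ in $(E,\xi_E)$ with splitting $\xi_E|_N = \xi'_E\oplus C_E$, one takes the symplectic complement $C_\perp$ of $dj(\xi_E)$ inside $\xi|_{j(E)}$ and sets the new splitting along $j(N)$ to be $\xi'=dj(\xi'_E)$ and $C=dj(C_E)\oplus C_\perp$, which is still a proper symplectic subbundle since $C_E$ is proper. The condition $F_1(TN)\cap \xi = F_1(TN)\cap \xi'\subsetneq \xi'$ is preserved by pushing forward under $dj$, so the relation naturally extends, both in the genuine and the formal versions, and parametrically.

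The core of the argument is the isocontact realization. Starting from $(e,F_t)$, extend $F_t$ to a family of isomorphisms $G_t\colon TM|_N\to TM|_N$ and set $(\xi_t,\omega_t)=(G_t^{-1}\xi,d\alpha\circ G_t)$, so that $\xi_1\pitchfork TN$ with induced splitting $\xi_1|_N=\xi'_1\oplus C_1$ of the same ranks as $(\xi',C)$. Choose a line $R\subset TM|_N$ transverse to $\xi_1$ such that $F_1(TN)\subset \xi'_1\oplus R$; this is possible because by smallness $F_1(TN)\cap\xi_1\subset\xi'_1$ and $F_1(TN)$ is transverse to $\xi_1$. Let $E$ be a tubular neighborhood of $N$ in $M$ whose tangent bundle along $N$ equals $\xi'_1\oplus R$; concretely, use the exponential map in directions lying in $\xi'_1$ and along $R$, so that $E$ is an open submanifold of codimension $\operatorname{rk}C\geq 2$ with core $N$. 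On $E$, the pair $(\xi_1\cap TE,\omega_1|_{\xi_1\cap TE})$ together with a choice of defining one-form gives a formal contact structure that is transverse to $N$.

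It remains to make this formal contact structure on $E$ genuine without destroying transversality to $N$. I would repeat verbatim Steps~1, 2 and 5 of the proof of Theorem \ref{thm:OTtran}, applied inside $E$ instead of $M$: build a transverse hypersurface $W\subset E$ through $N$, use Theorem \ref{thm:hprincEven} and Remark~\ref{rem:C0even} to deform the induced formal even contact structure on $W$ into a genuine one while preserving transversality to $N$, then contactize as in \eqref{eq:cont} to obtain a genuine contact structure $\xi_E$ on (a possibly smaller) $E$ homotopic to $\xi_1|_{TE}$ through formal contact structures and still transverse to $N$. This does not require the ambient $\xi$ to be modified, since the whole construction happens inside $E$. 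Applying Lemma \ref{lem:iso} to the homotopy of symplectic bundles $\xi_t\cap TE$ and extending via the $C_t$-direction produces a family $R_t\colon TE\to TM|_E$ that endows the inclusion with a formal isocontact embedding structure of $(E,\xi_E)$ into $(M,\xi)$, with $R_t|_N$ homotopic to $F_t$ as formal small transverse embeddings. The codimension of this realization equals $\operatorname{rk}C\geq 2$, which is precisely the hypothesis needed for the second case of Theorem \ref{thm:hprincPxi}, yielding the full, $C^0$-dense, parametric, and relative $h$-principle. The same relative-to-the-domain argument as in Step~4 of the proof of Theorem \ref{thm:OTtran}, using a partition of unity between $\alpha$ and the contactization form on $E$, handles the relative version. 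The parametric case follows because all the ingredients (the choice of $R$, the tubular neighborhood, Theorem~\ref{thm:hprincEven}, and Lemma~\ref{lem:iso}) are parametric.

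The main obstacle is the choice of the complementary direction $R$ and the integrable distribution defining $E$ so that $TE|_N=\xi'_1\oplus R$ really contains $F_1(TN)$ and has codimension equal to $\operatorname{rk}C$; smallness is used exactly here, and parametrically one has to make these choices continuously in the parameter, which is possible because the splitting $\xi=\xi'\oplus C$ is assumed to exist parametrically.
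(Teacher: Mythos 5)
Your proposal is correct and follows essentially the same route as the paper: smallness yields an isocontact realization supported on a codimension-$\operatorname{rk}C\geq 2$ tubular neighborhood $E$ built from $\xi'_1$ and a transverse line, the even-contact $h$-principle and contactization steps from the overtwisted case are rerun inside $E$, and the second case of Theorem \ref{thm:hprincPxi} concludes. The only slip is notational: the containment that guarantees $N\subset E$ should read $TN\subset\xi'_1\oplus R$ (equivalently $F_1(TN)\subset\xi'\oplus G_1(R)$ downstream of the rotation), not $F_1(TN)\subset\xi'_1\oplus R$, since $\xi'_1=G_1^{-1}(\xi')$ while $F_1(TN)=G_1(TN)$.
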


\begin{proof}
We have $F_1(TN)\cap \xi= F_1(TN)\cap \xi'$ and argue as in the overtwisted case to find a family of symplectic subbundles $(\xi_t, \omega_t)= (\xi'_t, \tilde \omega_t) \oplus (C, d\alpha|_C)$ such that $TN\cap \xi_t=TN \cap \xi'_t$, at time zero we have $(\xi_0,\omega_0)=(\xi,d\alpha)$ and $N \pitchfork \xi'_1$.\\

Since $TN\cap \xi'_1$ is strictly contained in $\xi_1'$, we can find an open submanifold $E$ of codimension two at least in $M$, containing $N$, using the exponential map along the fibers of a subbundle $\nu$ such that $\xi'_1|_N=(TN\cap\xi_1')\oplus \nu$. We can now apply step by step the same arguments as in the overtwisted case by considering only the symplectic subbundle $\xi'_1$ over $N$ instead of the whole $\xi_1$. We will end up with a family $(\xi'_t, \tilde \omega_t)$ with $t \in [0,2]$ of symplectic bundles defined over $E$ that satisfies:
\begin{itemize}
\item $(\xi'_0,\tilde \omega_0)= (\xi', d\alpha|_{\xi'})$,\\
\item $(\xi'_2,\tilde \omega_2)$ is a genuine contact structure on $E$ near $N$,\\
\item $N\pitchfork \xi'_2$.
\end{itemize}
From this family of symplectic bundles, we apply Lemma \ref{lem:iso} and construct a family of isomorphisms
$$ R_t\colon TE \rightarrow TE $$
such that $R_2^{-1}(\xi)=\xi_2'$ and $R_2$ induces a symplectic map with respect to the symplectic bundle structures. It extends to a family of bundle monomorphisms
$$ G_t\colon TE \rightarrow TM $$
such that $G_2^{-1}(\xi)=\xi_2'$ and $G_2$ induces a symplectic bundle map.
In particular, the trivial inclusion of $E$ into $(M,\xi)$ is endowed with a formal isocontact embedding structure from $(E,\xi_2')$ into $(M,\xi)$, which satisfies the properties making $\mathcal{R}_\pitchfork(\xi)$ a partial differential relation admitting an isocontact realization. Since $E$ is open and always of codimension $2$ at least, it follows from Theorem \ref{thm:hprincPxi} that a full $C^0$-dense $h$-principle is satisfied. Arguing as in the proof of Theorem \ref{thm:OTtran}, the $h$-principle also holds relative to the domain.
\end{proof}

Examples of formal transverse embeddings that are small are given in Corollary \ref{cor:corsmall}. Consider a formal transverse embedding $e\colon N\longrightarrow (M,\xi)$ of codimension at least one into the contact manifold $(M,\xi=\ker d\alpha)$. Let $(W,\xi')$ be a contact manifold of dimension higher than $M$ such that $i\colon (M,\xi)\rightarrow (W,\xi')$ is an isocontact embedding. The formal transverse structure of $e$, given by a family of monomorphisms $F_t\colon TN\rightarrow TM$ endows $i\circ e\colon N\longrightarrow W$ with a formal transverse structure $G_t=di\circ F_t\colon TN \rightarrow TW$. This formal transverse structure is small. Indeed, observe that there is a splitting $\xi'= \xi \oplus V$, where $V$ is the (symplectic) normal bundle of $M$ inside $W$. By construction $G_1(TN)\cap \xi' \subsetneq \xi$, and hence the formal embedding is small. Applying Theorem \ref{thm:smalltrans} we deduce Corollary \ref{cor:corsmall}.

\section{Hamiltonian energy level sets with arbitrary dynamics}\label{s:Hamst}

In this section, we present two constructions of Hamiltonian dynamics on which we can prescribe arbitrary dynamics along a low-dimensional invariant submanifold in a regular energy level set. This will motivate a general study of these invariant submanifolds in the last section via a class of embeddings. It will be an example of a differential relation to which Theorem \ref{thm:hprincPw} applies.

\subsection{The cotangent lift}\label{ss:HamEx} A classical construction, see e.g. \cite{T1}, permits realizing arbitrary dynamics as a Hamiltonian system in the cotangent bundle of the ambient manifold. To see this, let $N$ be a compact manifold and $X$ an arbitrary vector field defined on it. Equip $T^*N$ with the standard symplectic form $\omega=d\lambda$, where $\lambda$ is the Liouville one-form. Take as the Hamiltonian function $H(q,p)=p(X)$, where $q,p$ denote coordinates respectively in the base and the fibers of $T^*N$. It is easy to check that the zero section is an invariant submanifold of the Hamiltonian vector field defined by $H$ (whose flow is given by $(d\phi_t)^*$ where $\phi_t$ is the flow of $X$) and that $X_H|_{N\times \{0\}}=X$.\\

When $X$ is a non-vanishing vector field, the level set $H=0$ is a regular level set and the zero section of $T^*N$ lies on this level set. The fact that $H=0$ is regular follows easily from the fact that in coordinates one has
$$ H=\sum_{i=1}^n p_i X_i(q_1,...,q_n), $$
and since $X\neq 0$, at least one derivative with respect to $p_i$ is non-vanishing at every point. The level set $H=0$ will not be a closed submanifold in general. The following discussion will motivate the introduction of what we will call Hamiltonian embeddings. There exists an isosymplectic embedding
$$ e\colon T^*N \longrightarrow \R^{4n} $$ 
of the symplectic manifold $(T^*N,\omega)$ into $(\R^{4n},\omega_{std})$ using a classical theorem by Gromov \cite{G}.  The neighborhood of this embedded submanifold is given by its symplectic normal bundle $U$, so that there is a projection $\pi\colon U \longrightarrow e(T^*N)$. The function $\pi^*H$ defines a germ of a function $\pi^*H$ such that the Hamiltonian flow defined by such function along $\pi^*H=0$ has an invariant submanifold $e(T^*N)$ where $X_{\pi^*H}$ coincides with the previous Hamiltonian flow $X_H$ defined on $T^*N$. In particular, the flow coincides with the original arbitrary vector field $X$ along the zero section of $e(T^*N)$.  In other words, if there exists a function $f\in C^\infty(\R^{4n})$ such that $f|_U=\pi^*H$ and that admits zero as a regular value, then we have realized arbitrary dynamics on (possibly non-compact) regular level sets of Hamiltonian functions in the standard symplectic space. Of course, the existence of such a function $f$ is not clear in general.  In the next subsections, we will study this approach in much more generality from the point of view of the symplectic flexibility of what we call ``Hamiltonian embeddings". As a particular application, we will deduce that it is possible to realize arbitrary dynamics in the standard symplectic space (of high enough dimension) on a compact regular energy level set given by a $C^0$-perturbation of the standard unit sphere.

\subsection{Arbitrary dynamics on Hamiltonian structures}

As a starting point, we first show that it is possible to realize arbitrary dynamics along the kernel of a Hamiltonian structure, by taking a closed odd-dimensional manifold of high enough dimension. We will sometimes work with the line field defined by an arbitrary (non-vanishing) vector field, and so this line field is always orientable.
\begin{prop}\label{thm:HamS}
Let $N$ be any open or closed manifold and $M$ an almost contact manifold such that $\dim M\geq 3\dim N$. Fix a line field $L$ on $N$:
\begin{enumerate}
\item[(1)] Let $e\colon N \rightarrow M$ be any smooth embedding (onto a locally closed submanifold of $M$). Any exact Hamiltonian structure $\omega$  on $M$ is homotopic through exact Hamiltonian structures to an exact Hamiltonian structure $\hat \omega$ satisfying $e_*L=\ker \hat \omega|_{e(N)}$.
\end{enumerate}

\begin{enumerate}
\item[(2)] There is \emph{some} embedding $\tilde e$ of $N$ into $M$ such that for any cohomology class $a\in H^2(M)$ there is a Hamiltonian structure $\omega_a$ in $M$ such that $[\omega_a]=a$ and $\tilde e_*L=\ker \omega_a|_{\tilde e(N)}$.
\end{enumerate}
\end{prop}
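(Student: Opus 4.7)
Both parts rest on Theorem~\ref{thm:McD}, the $h$-principle for Hamiltonian structures, which deforms any formal Hamiltonian structure into a genuine one with arbitrary prescribed cohomology class, parametrically and relative to a domain on which the formal structure is already genuine. The main task is to build, on a tubular neighborhood of the embedded submanifold, a \emph{genuine} Hamiltonian structure whose kernel along that submanifold coincides with the prescribed line field. Once this local model exists, Theorem~\ref{thm:McD} propagates it to a global Hamiltonian structure with the desired cohomology class.

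For part~(1), I would proceed in three steps. First, construct a genuine exact Hamiltonian structure $\omega^{\mathrm{loc}}$ on a tubular neighborhood $U$ of $e(N)$ with $\ker \omega^{\mathrm{loc}}|_{e(N)} = e_*L$. The almost contact structure on $M$ supplies a formal Hamiltonian structure $\omega_0$ and a splitting $TM = \xi \oplus R$, with $\xi$ a rank-$2m$ symplectic subbundle and $R$ a transverse line. The hypothesis $\dim M \geq 3 \dim N$ forces the normal bundle of $e(N)$ in $M$ to have rank at least $2 \dim N$, providing the transverse room to homotope this splitting along $e(N)$ so that the line summand becomes $e_*L$. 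One can then integrate this formal data into a closed $2$-form on $U$ using a coordinate system adapted to the tubular neighborhood, in the spirit of the cotangent construction of Subsection~\ref{ss:HamEx}; exactness follows automatically from a choice of primitive, since $U$ deformation retracts to $e(N)$. Second, interpolate $\omega^{\mathrm{loc}}$ with the given $\omega = d\lambda$ into a formal Hamiltonian structure $\omega_f$ on $M$ that equals $\omega^{\mathrm{loc}}$ on a slightly smaller $U' \Subset U$ and equals $\omega$ outside $U$, preserving maximal rank throughout the interpolation with $\omega_0$ as a guide. Third, apply Theorem~\ref{thm:McD} to $\omega_f$ relative to $U' \cup (M \setminus U)$ with target cohomology class $0$; the output is a genuine exact Hamiltonian structure $\hat\omega$ agreeing with $\omega^{\mathrm{loc}}$ on $U'$, hence with $\ker \hat\omega|_{e(N)} = e_*L$. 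Finally, to promote this to a path through \emph{genuine} exact Hamiltonian structures from $\omega$ to $\hat\omega$, apply the parametric form of Theorem~\ref{thm:McD} with prescribed class $0$ to a formal path joining them, relative to the two endpoints.

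For part~(2), the freedom to choose $\tilde e$ bypasses the cohomological issues. Embed $\tilde e(N)$ inside a small open ball $B \subset M$; since $B$ is contractible, the construction of the first step produces a genuine Hamiltonian structure $\omega^{\mathrm{loc}}$ on $B$ with $\ker \omega^{\mathrm{loc}}|_{\tilde e(N)} = \tilde e_* L$. Interpolate $\omega^{\mathrm{loc}}$ with $\omega_0$ to obtain a formal Hamiltonian structure $\omega_f$ on $M$ that equals $\omega^{\mathrm{loc}}$ on some $B' \Subset B$. Given $a \in H^2(M;\mathbb{R})$, apply Theorem~\ref{thm:McD} relative to $B'$ with prescribed class $a$; the cohomological compatibility $[\omega_f]|_{B'} = a|_{B'} = 0$ holds vacuously because $B'$ is contractible, producing the required $\omega_a$.

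The principal obstacle is the first step above: producing an \emph{honest} closed, maximal-rank $2$-form on a tubular neighborhood of $e(N)$ whose kernel along $e(N)$ is exactly $e_* L$. The pointwise formal data is easy to assemble from the almost contact structure, and the dimension hypothesis $\dim M \geq 3\dim N$ supplies the transverse room needed to rotate the Reeb line into $e_* L$; the nontrivial step is upgrading this pointwise formal data to a closed $2$-form on the neighborhood, which I would handle by an explicit coordinate construction patterned on the cotangent model of Subsection~\ref{ss:HamEx}. Once that local closed form is in hand, the rest of the argument is a direct invocation of Theorem~\ref{thm:McD}.
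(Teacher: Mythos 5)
There is a genuine gap, and it sits precisely at the step you yourself flag as the crux. Your account of why $\dim M\geq 3\dim N$ is needed is not correct: rotating the kernel line of the formal Hamiltonian structure onto $e_*L$ along $e(N)$ only requires $\pi_k(S^{\dim M-1})=0$ for $k\leq \dim N$, i.e.\ $\dim N<\dim M-1$, so "transverse room for the line summand" cannot be where the hypothesis enters. The actual role of the inequality is to homotope the symplectic hyperplane bundle $(\xi,\omega|_{\xi})$ so that $\xi\cap TN$ becomes a prescribed hyperplane field $\eta\subset TN$ transverse to $L$ which is moreover \emph{isotropic} (Proposition \ref{prop:isotr}, resting on Lemma \ref{lem:isotro} from \cite{CMPP1}, which needs $2m\geq 3n-1$). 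Isotropy of $\eta$ together with $L\subset\ker$ forces the formal two-form to vanish on $TN$, i.e.\ to restrict to a closed (in fact zero) form on $N$; only then can one produce a genuine closed, exact germ near $N$ agreeing with the formal data on all of $TM|_N$ (Lemma \ref{lem:Hamgerm}, the Hamiltonian analogue of \cite[Lemma 16.2.2]{hprinc}). Your proposal never arranges this closedness of the restriction, and without it the claim that one can "integrate the formal data into a closed $2$-form on $U$ in the spirit of the cotangent construction" has no justification: any closed $\omega^{\mathrm{loc}}$ pulls back to a closed form on $N$, so the tangential part of the formal data is an honest obstruction that must be normalized first, and that normalization is exactly where the dimension hypothesis is consumed. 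Note also that the germ must agree with the formal structure on $TM|_N$ (not merely have kernel $e_*L$) so that it is homotopic to $\omega$ through maximal-rank forms; this homotopy is needed for everything that follows.

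The later steps are also not self-supporting. Your Step 2 interpolation of $\omega^{\mathrm{loc}}$ with $\omega$ "preserving maximal rank, with $\omega_0$ as a guide" is asserted, not constructed: convex combinations of maximal-rank forms degenerate in general, and nothing guarantees the two forms are close or formally homotopic on the gluing region. The paper avoids any spatial interpolation of genuine forms: it builds a path $\omega_t$, $t\in[0,2]$, of maximal-rank two-forms on all of $M$ (isotropic homotopy, then the germ lemma, then the homotopy extension property), with $\omega_0=\omega$ and $\omega_2$ exact and genuinely closed near $N$ with kernel $e_*L$, and applies Theorem \ref{thm:McD} once, parametrically and relative to $t=0$ and to a neighborhood of $N$; the slice $\omega_{t,1}$ is then the required homotopy through exact Hamiltonian structures. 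This is also the formal path you would need anyway for your final parametric application, so its construction cannot be bypassed. Your part (2) reduction to a contractible ball is in the spirit of the paper (which uses the relative cohomology sequence to choose a representative of $a$ vanishing near a disk containing $\tilde e(N)$ and then applies Theorem \ref{thm:McD} relative to a neighborhood of $N$), but it inherits the same unproved local step.
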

The almost contact condition is required for the statement not to be empty since it is equivalent to the existence of an exact Hamiltonian structure by McDuff's theorem.
\begin{proof}
Let $\omega$ be an exact Hamiltonian structure on $M$. The idea of the proof is to homotope $\omega$ to another non-degenerate (but not necessarily closed) two-form whose kernel along $N$ is exactly $e_*L$. We produce a germ of a closed non-degenerate two-form also having $e_*L$ as kernel near $N$ and extend it by a homotopy to a global Hamiltonian structure using McDuff's $h$-principle.

 Take any codimension one distribution $\xi$ where $\omega$ is non-degenerate, thus obtaining a codimension one symplectic distribution $(\xi,\omega|_{\xi})$. On the other hand, let $\eta$ be any hyperplane distribution on $N$ transverse to $L$. Applying Proposition \ref{prop:isotr}, we find a family of symplectic codimension one distributions $(\xi_t,\hat \omega_t)$
\begin{itemize}
\item $(\xi_0,\hat \omega_0)=(\xi,\omega|_{\xi})$,
\item $(\xi_1,\hat \omega_1)$ satisfies that $\xi_1|_N\cap TN=\eta$ and $\eta$ is an isotropic subbundle.
\end{itemize}
Take a family of line fields $X_t$ such that $X_t\oplus \xi_t=TM$, $X_0=\ker \omega$ and $X_1|_N=e_*L$. This is possible because $\ker \omega$ is transverse to $\xi_0$ and $e_*L$ is transverse to $\xi_1|_N$. We define a family of maximal rank two forms $\omega_t$ such that $\omega_0=\omega$, by requiring that each $\omega_t$ coincides with $\hat \omega_t$ on $\xi_t$ and $\ker \omega_t=X_t$. \\

Let $U$ be a tubular neighborhood of $N$ in $M$, understood as a neighborhood of the zero section of the normal bundle of $N$. We will apply now a trick that we already used in the proof of Theorem \ref{thm:OTtran}. The non-degenerate two-form $\omega_1$ vanishes when restricted to the zero section of $U$, i.e. along $N$. By \ref{lem:Hamgerm} there exists a maximally non-degenerate exact two-form $\tilde \omega$ defined near the zero section and such that $\tilde \omega|_{TU|_N}=\omega_1|_{TU|_N}$. The last condition can easily be used to show that $\tilde \omega$ is homotopic to $\omega_1$ through non-degenerate two-forms. We extend the family $\omega_t$ to values $t\in [1,2]$ such that $\omega_2=\tilde \omega$. Summarizing, we have proved that there is a family of two-forms $\omega_t$ with $t \in [0,2]$ such that
\begin{enumerate}
\item $\omega_t=\omega$ for $t\in [0,\delta]$,
\item $\omega_t$ is non-degenerate for all $t$,
\item $\omega_t=\omega_2$ for $t\in [2-\delta,2]$
\item $\omega_2$ is exact near $N$,
\item $\ker \omega_2|_N=e_*L$.
\end{enumerate}

Applying Theorem \ref{thm:McD}, we find a $I^2$-parametric family of two forms $\omega_{t,s}$ such that 
\begin{enumerate}
\item $\omega_{t,s}$ is of maximal rank,
\item $\omega_{t,0}=\omega_t$,
\item $\omega_{t,1}$ is exact,
\item $\omega_{0,s}=\omega$,
\item $\omega_{1,s}=\omega_2$ near $N$.
\end{enumerate}

In particular $\omega_{t,1}$ is a family of Hamiltonian structures such that $\omega_{0,1}=\omega$ and $\omega_{1,1}$ coincides with $\omega_2$ near $N$. Hence $\ker \omega_{1,1}|_N=\ker \omega_2|_N=e_*L$ which finishes the proof of the first statement taking $\hat \omega:=\omega_{1,1}$. \\

For part (2) we will need to show that there exists a two-form $\lambda$ with $[\lambda]=a$ for any $a\in H^2(M)$ such that $\lambda$ near some submanifold diffeomorphic to $N$ is exact. To prove the existence of such a two-form, consider a small disk $D^{2m+1}$ embedded in $M$. By Whitney's embedding theorem, there is some embedding of $N$ into $D^{2m+1}$. The relative cohomology exact sequence of $M$ with respect to $D^{2m+1}$ reads
\begin{center}
\begin{tikzcd}
H^1(D^{2m+1}) \arrow[rightarrow]{r}{} & H^2(M,D^{2m+1})\arrow[rightarrow]{r}{} & H^2(M)\arrow[rightarrow]{r}{} &  H^2(D^{2m+1})
\end{tikzcd}
\end{center}
We deduce that $H^2(M,D^{2m+1}) \cong H^2(M)$, and furthermore we know that 
$$H^2(M,D^{2m+1})\cong H_c^2(M\setminus D^{2m+1}).$$ 
Thus, for any class $a \in H^2(M)$ there is a two-form $\lambda$ with compact support in $M\setminus D^{2m+1}$ such that $[\lambda]=a$. Such a two-form trivially extends to $M$ and satisfies $\lambda=0$ near $N$. Arguing as in case (1), there is a maximal rank two-form $\tilde \omega$ which is exact near $N$ and whose kernel satisfies $\ker \tilde \omega=e_*L$ (where $e\colon N \rightarrow M$ is the embedding contained on a disk of $M$). Since we found a representative of $a$ which is exact near $N$, Theorem \ref{thm:McD} applies relative to a neighborhood of $N$, proving the existence of a Hamiltonian structure $\hat \omega$ such that $[\hat \omega]=a$ and $\ker \hat \omega$ coincides with $L$ along $e(N)$. 
\end{proof}

\begin{Remark}\label{rem:isotrop}
Observe that by construction that $e^*\hat \omega=0$ and $\tilde e^*\omega_a=0$.
\end{Remark}

From the previous theorem, we can realize arbitrary dynamics on a regular level set of a Hamiltonian vector field by a standard symplectization procedure. Let $X$ be a smooth non-vanishing flow on $N$. There is some embedding 
$$e\colon N\rightarrow S^{2m+1}, $$
with $2m+1 \in \{3n,3n+1\}$ by Whitney's embedding theorem. By Proposition \ref{thm:HamS} there is  exact Hamiltonian structure $d\gamma$ on $S^{2m+1}$ such that $e_*\langle X \rangle=\ker d\gamma|_{e(N)}$. Let $\tilde X$ be a section of $\ker d\gamma$ and $\lambda$ be a one-form such that 
\begin{equation*}
\begin{cases}
\lambda(\tilde X)=1\\
\lambda\wedge (d\gamma)^m\neq 0.
\end{cases}
\end{equation*}
Consider the two-form 
$$ \Omega= \lambda \wedge dt + d\gamma + td\lambda  $$
defined on $W=S^{2m+1}\times (-\varepsilon,\varepsilon)$, where $t$ denotes a coordinate in the second factor. It is an exact two form, and it is non-degenerate for $\varepsilon$ small enough. The Hamiltonian vector field of the function $f=t$ on the regular level set $\{t=0\}$ coincides with $\tilde X$, since:
\begin{align*}
\iota_{\tilde X} \Omega|_{t=0}&= -\iota_{\tilde X}\lambda dt + \iota_{\tilde X}d\gamma \\
					&=-dt.
\end{align*}
This gives another way of embedding arbitrary dynamics on regular energy level sets of Hamiltonian vector fields. As we did in Section \ref{ss:HamEx}, it would be possible to symplectically embed $W$ into some standard symplectic space of higher dimension and find a Hamiltonian vector field tangent to an arbitrary line field along a diffeomorphic copy of $N$.

\section{Hamiltonian embeddings and perturbations of the unit sphere}\label{s:Hamemb}

In the previous section, we motivated the existence of arbitrary dynamics on invariant submanifolds of Hamiltonian vector fields along a regular energy level set. In this section, we will study those embedded submanifolds in general and analyze their flexibility.

\subsection{Hamiltonian and generalized isosymplectic embeddings}

Let $N$ be an embedded submanifold on a symplectic manifold $(M,\omega)$. Given a line field $L$ in $N$, we are interested in understanding if there is some Hamiltonian function on $M$ with a regular energy level set containing $N$, and such that the Hamiltonian vector field is parallel to $L$ along $N$. This might be possible only after a small perturbation of the given embedding, as customary in $h$-principles for embeddings in symplectic geometry.

\begin{defi}\label{def:hamemb}
Let $L$ be a line field on a manifold $N$. An embedding $e\colon N\longrightarrow (M,\omega)$ into a symplectic manifold is called a Hamiltonian embedding of $(N,L)$ into $(M,\omega)$ if there is a Hamiltonian function $H$ and a component of a regular energy level set $W \supset N$ such that $\langle X_H \rangle|_{e(N)}=e_*L$.
\end{defi}

When we have  Hamiltonian embedding, any vector field spanning $L$ can be realized as the restriction of a Hamiltonian vector field along a regular energy level set.

\begin{lemma}\label{lem:sect}
Let $e\colon N\rightarrow (M,\omega)$ be a Hamiltonian embedding. Then for every vector field $X$ spanning $L$, there is a Hamiltonian function $\tilde H$ such that $X_{\tilde H}|_{e(n)}=e_*X$.
\end{lemma}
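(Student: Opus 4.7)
The proof is essentially a rescaling argument: we start from the Hamiltonian function given by the definition of a Hamiltonian embedding and multiply it by a suitable function so that the resulting Hamiltonian vector field has the correct magnitude along $e(N)$.

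Concretely, by Definition \ref{def:hamemb} there exists a Hamiltonian function $H \in C^\infty(M)$, a regular value $c \in \mathbb{R}$, and a connected component $W$ of $H^{-1}(c)$ with $e(N) \subset W$ and $\langle X_H\rangle|_{e(N)} = e_*L$. Replacing $H$ by $H-c$, we may assume $W \subset H^{-1}(0)$. Since $X$ is a non-vanishing section of $L$ and $X_H|_{e(N)}$ spans $e_*L$ pointwise (also non-vanishing, as $W$ is a regular level set), there is a unique nowhere-zero smooth function $f \in C^\infty(e(N))$ such that
\begin{equation*}
X_H|_{e(N)} = f \cdot e_*X.
\end{equation*}

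Next, I would extend $1/f$ to a smooth function $g \in C^\infty(M)$, which is possible because $e(N)$ is an embedded submanifold and $1/f$ is smooth on $e(N)$. Define $\tilde H := g\,H$. Using the Leibniz rule for Hamiltonian vector fields,
\begin{equation*}
X_{\tilde H} = X_{gH} = g\,X_H + H\,X_g.
\end{equation*}
On the level set $W$ we have $H \equiv 0$, so along $e(N) \subset W$ the second term vanishes and
\begin{equation*}
X_{\tilde H}|_{e(N)} = g|_{e(N)}\,X_H|_{e(N)} = \tfrac{1}{f}\cdot f\cdot e_*X = e_*X,
\end{equation*}
as required. It remains to observe that $\tilde H$ still has a regular energy level set containing $e(N)$: since $g$ is nowhere-zero on a neighborhood of $e(N)$ (we may shrink the extension if necessary to ensure this, using that $1/f$ is nonzero on $e(N)$), the zero sets of $\tilde H$ and $H$ coincide near $e(N)$, and $d\tilde H = g\,dH + H\,dg$ restricts to $g\,dH$ on $W$, which is non-vanishing there because $dH \neq 0$ on $W$ and $g \neq 0$.

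There is no real obstacle to this argument; it is purely a local rescaling on the level set $W$ exploiting the fact that the Hamiltonian vector field of $gH$ on $\{H=0\}$ is simply $g$ times the original Hamiltonian vector field. The only care needed is in extending $1/f$ to a smooth, nowhere-zero function on a neighborhood of $e(N)$ in $M$, which is a standard tubular-neighborhood extension.
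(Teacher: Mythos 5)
Your proof is correct and follows essentially the same route as the paper: rescale the given Hamiltonian to $\tilde H = gH$ with $g$ extending $1/f$, and use that on the level set $\{H=0\}$ one has $d\tilde H = g\,dH$, so $X_{\tilde H}|_{e(N)} = g\,X_H|_{e(N)} = e_*X$. Your extra remarks (extending $1/f$ off $e(N)$ and checking the level set stays regular) are fine and only add detail to the paper's argument.
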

\begin{proof}
Assume that the level set is given by $H=0$. Fix a section $X$ of $L$. By hypothesis $X_H|_N=f.e_*X$ for some positive function $f \in C^\infty(N)$. Consider the function 
$$\tilde H=g H,$$
with $g$ any positive function such that $g=\frac{1}{f}$ near $N \subset \{H=0\}$. The Hamiltonian vector field defined by $\tilde H$ satisfies
\begin{align*}
\iota_{X_{\tilde H}} \omega_{std}=-d\tilde H= -Hdg-gdH.
\end{align*}
At $\{H=0\}$ we deduce that $\iota_{X_{\tilde H}} \omega_{std}|_{H=0}=-gdH$. On the other hand $\iota_{gX_{H}}\omega_{std}= g \iota_{X_H}\omega_{std}=-gdH$. Hence along $N$ we have $\iota_{gX_{H}} \omega_{std}|_N=-d\tilde H|_N$ which proves that $X_{\tilde H}|_N=g.X_H=e_*X$.
\end{proof}

To study the flexibility of Hamiltonian embeddings, we will need to fix an additional piece of information that is easily stated in terms of generalized isosymplectic embeddings. Generalized isosymplectic embeddings are defined by analogy to generalized isosymplectic immersions \cite[Section 16.5]{hprinc}. 

\begin{defi}
Let $\tilde \omega$ be a (not necessarily maximally non-degenerate) closed two-form on a closed manifold $N$. An embedding $e\colon N\longrightarrow (M,\omega)$ is called a generalized isosymplectic embedding of $(N,\tilde \omega)$ into $(M,\omega)$ if $e^*\omega=\tilde \omega$. 
\end{defi}
A formal solution to the generalized isosymplectic embedding differential relation allows for a tangential rotation.
\begin{defi}
Let $\tilde \omega$ be a (not necessarily maximally non-degenerate) closed two-form on a closed manifold $N$. An embedding $e\colon N\longrightarrow (M,\omega)$ is a formal generalized isosymplectic embedding if there is a family of monomorphisms $F_t\colon TN \rightarrow TM$ covering $e$ such that $F_0=de$ and $\omega\circ F_1=\tilde \omega$ and $e^*[\omega]=[\tilde \omega]$ holds for the cohomology classes.
\end{defi}
By abuse of notation, we will sometimes denote $\omega \circ F_1$ as $F_1^*\omega$. We define iso-Hamiltonian embeddings by analogy to iso-Reeb embeddings \cite{CMPP1}.
\begin{defi}\label{def:isoham}
Let $L$ be a line field on a manifold $N$ and a (not necessarily maximally non-degenerate) closed two-form $\tilde \omega$ such that $L \subseteq \ker \tilde \omega$. A generalized isosymplectic embedding of $(N, \tilde \omega)$ into $(M,\omega)$ is an iso-Hamiltonian embedding of $(N, L, \tilde \omega)$ if:
\begin{itemize}
\item $e(N)\subset W$ where $i\colon W\longrightarrow M$ is an embedded hypersurface,
\item $\ker i^*\omega|_{e(N)}=e_*L$.
\end{itemize}
\end{defi}
The formal counterpart requires only a formal isosymplectic embedding.
\begin{defi}
A formal iso-Hamiltonian embedding $e\colon (N,L,\tilde \omega)\rightarrow M$ is a formal isosymplectic embedding $(e,F_t)$ such that:
\begin{itemize}
\item $e(N)\subset W$ where $i\colon W\rightarrow M$ is a closed embedded hypersurface,
\item $\ker i^*\omega|_{e(N)}=F_1(L)|_W$.
\end{itemize}
\end{defi}
 As we did throughout this work, if $N$ is open we require that the embedding of $N$ is onto a proper compact submanifold of $W$. Finally, small isosymplectic and small iso-Hamiltonian embeddings will constitute the flexible subclass.
\begin{defi}\label{def:formalsmall}
A formal isosymplectic embedding is called \emph{small} if there is a conformal symplectic splitting $TM|_{e(N)}=V_1\oplus V_2$ such that $F_1(TN) \subsetneq V_1$. A formal iso-Hamiltonian embedding is defined analogously with the additional condition that $V_1$ is transverse to $W$.
\end{defi}
An analogous definition can be given for genuine small isosymplectic or iso-Hamiltonian embeddings. The simple observation here is that iso-Hamiltonian embeddings are, in particular, Hamiltonian embeddings. Their formulation in terms of generalized isosymplectic embeddings makes them suitable for an $h$-principle formulation in the next subsection.

\begin{lemma}\label{lem:isoHam}
For a given line field $L$ in $N$, if there is some two form $\tilde \omega$ such that $e\colon N\rightarrow (M,\omega)$ is an iso-Hamiltonian embedding of $(N,L,\tilde \omega)$, then $e$ is a Hamiltonian embedding of $(N,L)$.
\end{lemma}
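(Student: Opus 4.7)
\begin{Proof}[Proof plan for Lemma \ref{lem:isoHam}]
The plan is to exhibit a Hamiltonian function $H$ on $M$ whose Hamiltonian vector field, restricted to $e(N)$, spans the line field $e_*L$, using only the hypothesis that $e$ is an iso-Hamiltonian embedding and some standard facts about hypersurfaces in symplectic manifolds.

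First, since by Definition \ref{def:isoham} we have $e(N)\subset W$ for a (co-orientable) embedded hypersurface $i\colon W\hookrightarrow M$, I would pick a local defining function for $W$: by the tubular neighborhood theorem there exists a neighborhood $U$ of $e(N)$ in $M$, diffeomorphic to a neighborhood of the zero section of the normal line bundle of $W\cap U$, and I take $H\colon U\to\R$ to be the fiber coordinate. Extending $H$ to all of $M$ by a cutoff away from $e(N)$ produces a smooth function on $M$ for which the connected component of the regular level set $\{H=0\}$ containing $e(N)$ coincides with $W$ near $e(N)$. Since $dH$ does not vanish along $W\cap U$, the level $\{H=0\}$ is regular and $X_H$ is nowhere zero along $W\cap U$.

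Next, the key computation: along $W$ the function $H$ is constant, so $dH|_{TW}=0$, and hence $\iota_{X_H}(i^*\omega)=-dH|_{TW}=0$. Therefore $X_H|_{W\cap U}$ is a nowhere-zero section of the characteristic line field $\ker i^*\omega$ on $W$. By the iso-Hamiltonian hypothesis, $\ker i^*\omega|_{e(N)}=e_*L$, so $X_H|_{e(N)}$ spans $e_*L$, i.e. $\langle X_H\rangle|_{e(N)}=e_*L$. Together with $e(N)\subset\{H=0\}\cap W$ and the regularity of the level set, this is exactly Definition \ref{def:hamemb}, so $e$ is a Hamiltonian embedding of $(N,L)$.

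There is no real obstacle here beyond checking the co-orientability of $W$ (which follows from our standing orientability assumptions, since $M$ is symplectic and hence oriented) in order to obtain a globally well-defined transverse coordinate near $e(N)$; the rest is the standard fact that on a regular level set of a Hamiltonian, the Hamiltonian vector field spans the characteristic foliation. Note that Lemma \ref{lem:sect} may then be invoked to upgrade the conclusion to prescribing any given section of $L$, but for the statement as given this is not needed.
\end{Proof}
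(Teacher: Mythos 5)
Your core argument is exactly the paper's: the paper's proof simply takes any Hamiltonian $H$ having $W$ as a regular level set and observes that $X_H$ is parallel to the characteristic line field $\ker i^*\omega$ along $W$, which equals $e_*L$ along $e(N)$ by the iso-Hamiltonian hypothesis. Your computation $\iota_{X_H}\omega=-dH$, hence $X_H\in\ker i^*\omega$ along the level set, is the same key step, so in substance you and the paper coincide; the only difference is that you construct $H$ explicitly rather than quoting its existence.

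There is, however, a flaw in your explicit construction as written. If you take $H$ to be the fiber coordinate on a tubular neighborhood $U$ of $e(N)$ only and then ``extend by a cutoff away from $e(N)$'', the extended function vanishes identically on the region where the cutoff is zero; consequently the zero set of $H$ is much larger than $W\cap U$, and the connected component of $\{H=0\}$ containing $e(N)$ runs into that region and contains critical points of $H$. Definition \ref{def:hamemb} asks for a \emph{component of a regular energy level set} containing $N$, so this extension does not literally produce one. The fix is standard and cheap: since $W$ is a closed, coorientable hypersurface (coorientability follows from the orientability conventions, as you note), take the fiber coordinate on a tubular neighborhood of \emph{all} of $W$ and extend it to $M$ so that it is nonzero (e.g.\ locally constant equal to $\pm1$) outside that neighborhood; then $W$ itself is a regular level set of the resulting $H$ and your computation goes through verbatim, recovering the paper's proof.
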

\begin{proof}
By definition we have $i^*\omega|_{e(N)}=e_*L$. Let $H$ be any Hamiltonian function having $W$ as a regular level set. The Hamiltonian vector field $X_H$ of $H$ is parallel to $\ker i^*\omega$ along $W$, which implies that $X_H|_{e(N)}=e_*L$ and hence $e$ is a Hamiltonian embedding.
\end{proof}

\subsection{Flexibility of small iso-Hamiltonian embeddings}

We are now ready to show that small iso-Hamiltonian embeddings satisfy a full $h$-principle. We point out that the same proof applies to small generalized isosymplectic embeddings and hence they satisfy a full $h$-principle as well.

\begin{theorem}\label{thm:isoHam}
Small iso-Hamiltonian embeddings satisfy a full $h$-principle. It is $C^0$-dense and the hypersurface of the obtained genuine iso-Hamiltonian structure is a $C^0$-perturbation (compactly supported near the formal embedding of $N$) of the hypersurface $W$ given with the formal embedding.
\end{theorem}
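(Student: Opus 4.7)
The plan is to show that the class of small iso-Hamiltonian embeddings fits the framework of Theorem \ref{thm:hprincPw}: it admits an isosymplectic realization of codimension at least two and behaves well under inclusions. Once this is verified, Theorem \ref{thm:hprincPw} immediately delivers a full $C^0$-dense $h$-principle, and $C^0$-density combined with the fact that the hypersurface of the realization lives in $E$ (a $C^0$-small neighborhood of $e(N)$) will give the statement that the resulting hypersurface is a $C^0$-perturbation of $W$ compactly supported near $e(N)$.

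The inclusion property is the easier half. Given a formal (resp.\ genuine) isosymplectic embedding $j\colon (E,\omega') \to (M,\omega)$ and a formal (resp.\ genuine) iso-Hamiltonian embedding $(e,F_t)$ of $(N,L,\tilde\omega)$ into $(E,\omega')$ with hypersurface $W_E\subset E$, one obtains a formal iso-Hamiltonian embedding of $(N,L,\tilde\omega)$ into $(M,\omega)$ by composing: the hypersurface of $M$ is constructed by flowing $j(W_E)$ along any local extension transverse to $j(E)$ (or, in the genuine case, taking any local hypersurface in $M$ tangent to $j(W_E)$ along $j(e(N))$). Smallness also transfers: if $TE|_{e(N)}=V_1'\oplus V_2'$ is the conformal symplectic splitting witnessing smallness inside $E$, then $V_1:=dj(V_1')$ together with a symplectic complement in $TM|_{j(e(N))}$ yields a conformal symplectic splitting witnessing smallness in $M$, and $V_1$ remains transverse to the extended hypersurface by construction.

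The heart of the proof is building the isosymplectic realization. Given a small formal iso-Hamiltonian embedding $(e,F_t)$ with conformal symplectic splitting $TM|_{e(N)}=V_1\oplus V_2$, smallness gives a non-trivial symplectic complement $K\subset V_1$ of $F_1(TN)$; let $E$ be the image under the exponential map of a small neighborhood of the zero section of $K$, so that $E\supset e(N)$ has codimension $\operatorname{rk} V_2\geq 2$ with $TE|_{e(N)}=V_1$. As in the proof of Corollary \ref{cor:is2}, extend $F_t$ to a family of automorphisms $G_t$ of $TM|_{e(N)}$ and transport $\omega$ to a family of formal symplectic structures; then apply the symplectic neighborhood construction (the symplectic analog of \cite[Lemma 16.2.2]{hprinc}, using that $V_1$ is genuinely conformally symplectic) to produce a genuine closed non-degenerate two-form $\omega'$ on (a possibly shrunk) $E$ whose restriction along $e(N)$ matches the conformal symplectic structure on $V_1$. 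Because $V_1$ is transverse to $W$, the intersection $V_1\cap TW$ has corank one in $V_1$; extend it to a codimension-one foliation near $e(N)$ in $E$ and take $W'\subset E$ to be the leaf through $e(N)$. By construction $\ker (i_{W'}^*\omega')|_{e(N)}=F_1(L)$, so $(e,F_1|_N)$ is a genuine iso-Hamiltonian embedding of $(N,L,\tilde\omega)$ into $(E,\omega',W')$. Finally, Lemma \ref{lem:iso} applied to the homotopy of symplectic bundle structures on $V_1$ produces a family of symplectic bundle isomorphisms extending to bundle monomorphisms $R_t\colon TE\to TM$ that endow the inclusion $E\hookrightarrow M$ with a formal isosymplectic embedding structure whose restriction to $TN$ is homotopic to $F_t$ through formal iso-Hamiltonian embeddings.

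I expect the main technical obstacle to be the symplectic neighborhood step: producing a genuine closed non-degenerate $\omega'$ on $E$ whose restriction to $V_1=TE|_{e(N)}$ realizes a prescribed formal (conformal) symplectic bundle structure, while simultaneously being compatible with the hypersurface $W'$ and with $e^*\omega'=\tilde\omega$ (after homotoping $\tilde\omega$ within its cohomology class if necessary, using the cohomological constraint $e^*[\omega]=[\tilde\omega]$ in the definition of formal isosymplectic embedding). Once this germ-level construction is in place, the relative and parametric versions follow because every ingredient (the conformal symplectic splitting, Lemma \ref{lem:iso}, and the isosymplectic $h$-principle of Theorem \ref{thm:isosymp} invoked inside Theorem \ref{thm:hprincPw}) is parametric and relative, so the $h$-principle is relative to the domain whenever the formal embedding is already genuine near a closed subset $A\subset N$, exactly as in the proof of Theorem \ref{thm:smalltrans}.
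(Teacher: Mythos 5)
Your overall strategy is exactly the paper's: verify that small iso-Hamiltonian embeddings behave well under inclusions and admit an isosymplectic realization of codimension $\operatorname{rank}V_2\geq 2$, then invoke Theorem \ref{thm:hprincPw}. The construction of $E$ from the smallness splitting, the use of Lemma \ref{lem:iso} to produce $R_t$, and the Weinstein-neighborhood extension of the hypersurface for the inclusion property all match. However, the step you yourself flag as ``the main technical obstacle'' is a genuine gap in the proposal: you need a germ of a \emph{closed} non-degenerate two-form $\omega'$ on $E$ that simultaneously (i) restricts to $\tilde\omega$ on $TN$ on the nose (not merely cohomologously --- $\tilde\omega$ is fixed data and cannot be homotoped), and (ii) has $\ker(i_{W'}^*\omega')|_{e(N)}=e_*L$. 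The paper closes this in two stages: first it applies a Hamiltonian-structure version of \cite[Lemma 16.2.2]{hprinc} (Lemma \ref{lem:Hamgerm}, proved in the appendix via a partition-of-unity primitive $d(\chi_j\sum_i t_i\beta_i)$ in the fibers) on the hypersurface $W'=W\cap E$, making the form closed there while freezing its restriction to $TE|_N$, and only then applies \cite[Lemma 16.5.2]{hprinc} in the normal bundle of $W'$ inside $E$ to thicken to a symplectic form on $E$. Your single-step ``symplectic analog of Lemma 16.2.2'' does not by itself control the restriction of $\omega'$ to the hypersurface, which is where the iso-Hamiltonian (as opposed to merely isosymplectic) condition lives. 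Note also that $e^*[\omega]=[\tilde\omega]$ is not needed to adjust $\tilde\omega$; it is what guarantees the cohomological hypothesis $\tilde e^*[\omega]=[\omega']$ on $E$ (which retracts to $N$) so that Theorem \ref{thm:isosymp} applies.

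A second, smaller gap: the final claim that the hypersurface is a globally defined, $C^0$-small, compactly supported perturbation of $W$ does not follow just from $C^0$-density of the embedding of $E$. The realization only produces a hypersurface $W'$ inside $E$; after pushing forward by the genuine isosymplectic embedding $\tilde e_1$ one must extend $\tilde e_1(W')$ to a closed hypersurface of $M$ and splice it into $W$ away from $N$. The paper does this in Lemma \ref{lem:appHyper} by taking a Weinstein neighborhood $\pi\colon U\to \tilde e_1(E)$, pulling back a defining function $h$ of $\tilde e_1(W')$ to get $\{\pi^*h=0\}$, and then isotoping $\tilde e_1(W)$, compactly supported near $N$, to agree with it. You should include this argument (or an equivalent one); as written your proposal asserts the conclusion rather than proving it. Finally, a minor imprecision: exponentiating the zero section of a complement $K$ of $F_1(TN)$ in $V_1$ gives $TE|_{e(N)}=TN\oplus K$, which equals $V_1$ only after first homotoping the splitting so that $TN\subset V_1$; this is harmless but should be said.
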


\begin{proof}
We will first show that small iso-Hamiltonian embeddings satisfy the hypotheses of Theorem \ref{thm:hprincPw}, and defer to the appendix the fact that one can choose the hypersurface $C^0$-close to $W$. Let $e\colon (N,L,\tilde \omega) \longrightarrow (M,\omega)$ be a small formal iso-Hamiltonian embedding. That is, there is a hypersurface $i\colon W\longrightarrow M$ such that $W\supset e(N)$, and a family of monomorphisms $F_t\colon TN \longrightarrow TM$ such that $\ker i^*\omega|_{e(N)}=F_1(L)$, $\tilde \omega= \omega \circ F_1$ and $e^*[\omega]=[\tilde \omega]$. Furthermore, there is a symplectic splitting $TM|_M=V_1\oplus V_2$ such that $F_1(TN) \subsetneq V_1$, and we let $2k$ be the rank of $V_2$.

Extend $F_t$ to a family of isomorphisms
$$ G_t\colon TM|_{N} \rightarrow TM|_{N}, $$
which induces a family of symplectic bundle structures $(TM|_N, \omega_t= (G_t^{-1})^*\omega)$. We can assume that $\omega_t=\hat \omega_t \oplus \omega'$ for a fixed $\omega'$ defined on $V_2$, and the induced symplectic splitting is $TM|_N=V_1\oplus V_2$. 

\textbf{Step 1: An open submanifold of positive codimension and a symplectic form on it.}
Fixing a metric and using the exponential map along the fibers of $V_1$, we construct an open submanifold $E$ of codimension $2k$ such that $N\subset E$. The form $\hat \omega_1$ endows $TE$ with a symplectic bundle structure. By taking the exponential map for small enough times, we can safely assume that $E$ is transverse to $W$ so that $W'=W\cap E$ is a codimension one submanifold of $E$ satisfying $N\subset W'$. Denote by $j_1\colon W'\rightarrow E$ and $j_2\colon W'\rightarrow M$ the trivial inclusions of $W'$ into $E$ and $M$ respectively. The fact that $TE\oplus V_2$ is a symplectic orthogonal splitting implies that
$$\ker {j_1}^*\hat \omega_1|_N=e_*L,$$
and that
$${j_2}^*\omega_1|_{TN}={j_1}^*\hat \omega_1|_{TN}=\tilde \omega.$$
 Arguing exactly as in the proof of Proposition \ref{thm:HamS} using Lemma \ref{lem:Hamgerm}, we can assume up to a homotopy of non-degenerate two-forms that the restriction of $\hat \omega_1$ is a closed two-form in $W'$ and still satisfies $\ker {j_1}^*\hat \omega_1=e_*L$ and $\hat \omega_1|_{TN}=\tilde \omega$. Using \cite[Lemma 16.5.2]{hprinc} in the normal bundle of $W'$, we construct a germ of symplectic structure near $W'$ inside $E$ that coincides with $\hat \omega_1$ in $W$. Concretely, we construct a homotopy of non-degenerate two-forms $\hat \omega_t$ with $t\in[1,2]$ such that $\hat \omega_2$ is symplectic in $E$ (up to shrinking $E$ a bit) and satisfies $\hat \omega_2|_N= \hat \omega_1|_N $ and hence $\ker {j_1}^*\hat \omega_2|_N=e_*L$ and $\hat \omega_2|_{TN}=\tilde \omega$. In particular, in the symplectic manifold $(E,\hat \omega_2)$, the embedding $e$ restricted in the target to $E$ is a genuine iso-Hamiltonian embedding of $(N,L,\tilde \omega)$.\\
 
\textbf{Step 2: A formal isosymplectic embedding.}  We claim that the symplectic manifold $(E, \hat \omega_2)$ is formally isosymplectically embedded in $(M,\omega)$. Indeed, the family $\hat \omega_t$ defined for $t\in [0,2]$ in $TE$ defines by Lemma \ref{lem:iso} a family of isomorphisms
$$ R_t'\colon TE \longrightarrow TE $$
such that $\hat \omega_t=\hat \omega_0 \circ R_t'$. It extends to a family of monomorphisms
$$ R_t \colon TE \longrightarrow TM $$
such that $\hat \omega_t= \omega_0 \circ R_t$, where $\omega_0=\hat \omega_0 \oplus \omega'=\omega$, the ambient symplectic structure.  We have thus proved that small iso-Hamiltonian embeddings admit an isosymplectic realization (of codimension $\rank V_2=2k\geq 2$) as in Definition \ref{def:isosympext}, and one easily checks that the construction is parametric and relative to the domain in the sense that $R_t$ is constant if the iso-Hamiltonian embedding was already genuine near a closed set of $N$.\\

\textbf{Step 3: A hypersurface $C^0$-close to $W$.} To conclude by applying Theorem \ref{thm:hprincPw}, it remains to check that the relation behaves well under inclusions. This is easily checked if we only require the existence of a germ of a hypersurface in Definition \ref{def:isoham}, by using Weinstein's tubular neighborhood theorem for symplectic submanifolds to extend the hypersurface $W'$ making an embedding iso-Hamiltonian in the symplectic submanifold $E$ to a suitable germ of a hypersurface in $M$. 
 To find a globally defined hypersurface making the final embedding a genuine iso-Hamiltonian one, an option is to carefully follow the argument in the proof of Theorem \ref{thm:hprincPw} to see how to construct it out of the original hypersurface $W$ given by the formal iso-Hamiltonian embedding structure. The advantage of this approach is that it allows showing that the final hypersurface can be chosen to be $C^0$-close to $W$. Since this is simply a technical lemma that requires an inspection of the proof of Theorem \ref{thm:hprincPw}, we defer it to the Appendix, see Lemma \ref{lem:appHyper}.
\end{proof}

\subsection{Dynamics on $C^0$-perturbations of the unit sphere}

In this last section, we apply the flexibility of Hamiltonian embeddings to prove Theorem \ref{thm:mainuniv}: perturbations of the unit sphere contain arbitrary dynamics of high enough codimension. In the proof, we use the preliminary result Proposition \ref{thm:HamS} to show that a given embedding is a formal small iso-Hamiltonian embedding in this codimension, and conclude applying the $h$-principle.

\begin{proof}[Proof of Theorem \ref{thm:mainuniv}]
Denote by $L$ the line field over $N$ generated by $X$. Let $2k-1$ be the smallest odd integer in $\{3\dim N,3\dim N+1\}$. Denote by $S^{2k-1}$ the unit sphere in $(M=\R^{2k},\omega_{std})$. The standard symplectic structure induces on $S^{2k-1}$ the Hamiltonian structure $d\alpha$, where $\alpha$ is the stardard contact form on $S^{2k-1}$. Let $e\colon N \rightarrow S^{2k-1}$ be any smooth embedding (at least one exists by Whitney's embedding theorem). Applying Proposition \ref{thm:HamS} there is a homotopy of Hamiltonian structures $\tilde \omega_t$ such that $\tilde \omega_0=d\alpha$ and $\tilde \omega_1$ satisfies $e_*L=\ker \tilde \omega_1|_{e(N)}$. This homotopy induces a homotopy of symplectic hyperplane bundles $(\xi_t,\hat \omega_t)$ over $S^{2k-1}$, where $(\xi_0,\omega_0)=(\xi_{std},d\alpha|_{\xi_{std}})$ (the standard contact structure on $S^{2k-1}$ seen as a codimension one symplectic bundle). It is induced by a family of isomorphisms $\phi_t\colon \xi_0\rightarrow \xi_t$ satisfying $\hat\omega_t= (\phi_t^{-1})^*\ \omega_0$. \\

Extend it to a family of rank $2k$ symplectic bundles as follows. The standard symplectic form $\omega_{std}$ induces a symplectic bundle structure on $TM|_{S^{2k-1}}$ which decomposes as $(V_0\oplus \xi_0, v_0 \oplus d\alpha|_{\xi_0})$. The rank $2$ subbundle $V_0$ is the symplectic orthogonal complement of $\xi_0$ with respect to $\omega_{std}$. By construction, the subbundle $V_0$ decomposes as $\ker d\alpha \oplus Y$ where $Y$ is the radial line bundle defined near the sphere and transverse to it. We can extend the isomorphisms $\phi_t$ to a family of isomorphisms
$$ G_t\colon TM|_{S^{2k-1}} \rightarrow TM|_{S^{2k-1}}, $$
such that $G_t(\xi_0)=\xi_t$, $G_t(\ker d\alpha)=\ker \tilde \omega_t$ and $G_t(Y)=Y$. Define $V_t= \ker \tilde \omega_t \oplus Y$, then 
$$(TM|_{S^{2k-1}},\omega_t)=\big(V_t \oplus \xi_t, v_t \oplus \hat \omega_t \big),$$ 
is a family of symplectic bundle structures on $TM|_{S^{2k-1}}$, where $v_t=(G_t^{-1})^*v_0$. The path of non-degenerate two-forms $\omega_t$ with $t\in [0,1]$ satisfies $\omega_0=\omega_{std}$ and $\omega_1$ restricted to $S^{2k-1}$ is $\hat \omega_1$.

Identify a neighborhood $U$ of $N$ with a neighborhood of the zero section of the normal bundle $E$ of $N$ inside $\mathbb{R}^{2k}$. It is endowed with the non-degenerate two-form $\omega_1$. Denote by $\tilde e$ the embedding
$$ \tilde e\colon N\rightarrow \mathbb{R}^{2k}, $$
obtained by composing the embedding $e$ with the trivial inclusion $i\colon S^{2m-1}\rightarrow \mathbb{R}^{2m}$ of $S^{2m-1}$ into $\mathbb{R}^{2k}$. By Remark \ref{rem:isotrop}, it satisfies
$$\tilde e^*\omega_1=e^*i^*\omega_1=e^*\tilde \omega_1=0.$$
 The embedding $e$ is then a formal iso-Hamiltonian embedding of $(N,L,0)$ into $\R^{2k}$, where the hypersurface $W$ is the unit sphere $S^{2m-1}$. To make it a small formal iso-Hamiltonian embedding, we just consider the trivial codimension two isosymplectic embedding
$$j\colon (\mathbb{R}^{2k},\omega_{std}) \hookrightarrow (\mathbb{R}^{2k} \times \mathbb{R}^2, \omega_{std}\oplus dx\wedge dy),$$
where $x,y$ are coordinates in the second factor of $\mathbb{R}^{2k} \times \mathbb{R}^2$.
Then $j\circ e$ is a formal iso-Hamiltonian embedding, that is small when taking as hypersurface the unit sphere in $\mathbb{R}^{2k+2}$. Applying Theorem \ref{thm:isoHam}, it is homotopic to a $C^0$-close genuine iso-Hamiltonian embedding. The hypersurface $W$ is given by a $C^0$-perturbation of the unit sphere. We conclude using Lemmas \ref{lem:isoHam} and \ref{lem:sect}.
\end{proof}

\begin{Remark}\label{rm:generalHam}
As in \cite[Theorem 1.8]{CMPP1}, the previous theorem admits a completely general statement. Using obstruction theory, it can be shown that any smooth embedding $e\colon N\rightarrow (M,\omega)$ contained in a hypersurface of a symplectic manifold $(M,\omega)$ is a small formal iso-Hamiltonian embedding $e_X\colon (N,X,0)\rightarrow (M,\omega)$, as long as $\dim M\geq 3\dim N+3$. Then by Theorem \ref{thm:isoHam}, the embedding $e_X$ is isotopic to a $C^0$-close genuine (small) iso-Hamiltonian embedding.
\end{Remark}

\appendix

\section{Stability lemma and isotropic subbundles}

In this appendix, we recall a standard stability lemma for homotopies of vector bundles, and computations in algebraic topology done in \cite{CMPP1}. For the stability lemma, see for instance \cite[Section 5.2]{CMPP1}.
\begin{lemma}\label{lem:iso}
Let $V_{t}$ be a parametric family of real bundles over a fixed smooth manifold $M$ parametrized by $t \in [0,1]$. Then, there exists a family 
$$\phi_{t}\colon V_{0} \to V_{t}$$
of bundle isomorphisms. If the $V_t$ are equipped with a family of fiberwise symplectic structures $\omega_t$, then we can choose $\phi_t$ to be symplectic bundle isomorphisms.
\end{lemma}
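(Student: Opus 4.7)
The plan is to realize the family $\{V_t\}_{t\in[0,1]}$ as a single vector bundle $\mathcal{V}\to M\times[0,1]$ whose restriction to $M\times\{t\}$ is $V_t$, and then produce the isomorphisms $\phi_t$ by parallel transport along the $[0,1]$-direction. This is the standard procedure underlying the homotopy invariance of vector bundles, and it is well adapted to preserving extra fiberwise structure if one chooses the connection accordingly.

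First I would address the purely real case. Pick any smooth connection $\nabla$ on $\mathcal{V}\to M\times[0,1]$; such a connection exists by a partition-of-unity argument on local trivializations. For each point $x\in M$, consider the smooth curve $\gamma_x\colon [0,1]\to M\times[0,1]$, $t\mapsto (x,t)$. Parallel transport with respect to $\nabla$ along $\gamma_x$ gives a linear isomorphism $(V_0)_x\to (V_t)_x$, and since $\nabla$ is smooth and the family of curves $\{\gamma_x\}_{x\in M}$ varies smoothly in $x$, the resulting maps assemble into a smooth family of vector bundle isomorphisms $\phi_t\colon V_0\to V_t$ with $\phi_0=\mathrm{id}$.

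For the symplectic case, the fiberwise symplectic forms $\omega_t$ assemble into a fiberwise symplectic form $\Omega$ on $\mathcal{V}\to M\times[0,1]$, so $\mathcal{V}$ is a symplectic vector bundle with structure group $\mathrm{Sp}(2n,\mathbb{R})$. I would now choose a \emph{symplectic} connection, that is, a connection $\nabla$ whose parallel transport preserves $\Omega$; equivalently, the connection one-form takes values in the Lie algebra $\mathfrak{sp}(2n,\mathbb{R})$. Such a connection exists by the standard construction: in each local trivialization adapted to the symplectic structure one has trivial symplectic connections, and a partition-of-unity argument produces a global symplectic connection (the space of symplectic connections is an affine space modeled on $\mathfrak{sp}$-valued one-forms, hence non-empty). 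Parallel transport by $\nabla$ along $\gamma_x$ then yields symplectic linear isomorphisms $(V_0)_x\to(V_t)_x$, and, as before, these fit into a smooth family of symplectic bundle isomorphisms $\phi_t\colon V_0\to V_t$.

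The only place where one has to be mildly careful is the existence of a symplectic connection and the verification that parallel transport under it preserves $\Omega$, but both facts are standard consequences of the reduction of the structure group to $\mathrm{Sp}(2n,\mathbb{R})$. Everything else is routine: smoothness in $(x,t)$ of $\phi_t$ follows from smoothness of the connection and of the flow defining parallel transport, and the linearity and (if applicable) symplectic nature of $\phi_t$ are immediate properties of parallel transport.
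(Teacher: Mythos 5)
Your proof is correct: assembling the family into a single bundle over $M\times[0,1]$ and parallel-transporting along the curves $t\mapsto(x,t)$ with an arbitrary connection (respectively, a symplectic connection, which exists by patching local symplectic trivializations with a partition of unity and whose parallel transport preserves the fiberwise form since $\Omega$ is covariantly constant) is the standard proof of this stability lemma. The paper itself does not prove the statement but refers to \cite[Section 5.2]{CMPP1}, and your homotopy-invariance/parallel-transport argument is essentially the same as the one intended there, so there is nothing to object to beyond the cosmetic remark that non-emptiness of the space of symplectic connections follows from the partition-of-unity construction you describe, not from its affine structure.
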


The following lemma was proved in \cite{CMPP1}.

\begin{lemma}[Lemma 5.13 in \cite{CMPP1}] \label{lem:isotro}
Let $(\xi,\omega)$ be a symplectic bundle of rank $2m$ over $N$ and denote $\eta= TN \cap \xi$ which is of rank $n-1$. If $2m\geq 3n-1$ then $\eta$ is homotopic to an isotropic subbundle of $(\xi,\omega)$.
\end{lemma}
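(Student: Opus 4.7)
The plan is to reformulate the statement as an obstruction-theoretic deformation problem for sections of a fiber bundle and then reduce it to a connectivity computation in the symplectic isotropic Grassmannian.

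First, I would rephrase: the subbundle $\eta \subset \xi$ is a section $s_\eta$ of the fiber bundle $\mathrm{Gr}(n-1,\xi) \to N$ whose fibers are the Grassmannian $\mathrm{Gr}(n-1,2m)$ of $(n-1)$-planes in a $2m$-dimensional vector space. Inside it sits the subbundle $\mathrm{IGr}(n-1,\xi) \to N$ whose fibers are the isotropic Grassmannian $\mathrm{IGr}(n-1,2m)$ with respect to the fiberwise symplectic structure $\omega$. Homotoping $\eta$ through rank-$(n-1)$ subbundles of $\xi$ to an isotropic one amounts to deforming the section $s_\eta$ through sections of $\mathrm{Gr}(n-1,\xi)$ so that its image lies in the subbundle $\mathrm{IGr}(n-1,\xi)$. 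Standard obstruction theory says this is possible whenever the fiber inclusion $\mathrm{IGr}(n-1,2m) \hookrightarrow \mathrm{Gr}(n-1,2m)$ is $n$-connected, since $\dim N = n$.

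Second, I would estimate the connectivity by comparing Stiefel manifolds. Both Grassmannians are $\mathrm{GL}(n-1)$-quotients of their corresponding Stiefel manifolds $V^{\mathrm{iso}}_{n-1}(\mathbb{R}^{2m})$ and $V_{n-1}(\mathbb{R}^{2m})$, so after the quotient the induced inclusion has the same connectivity as $V^{\mathrm{iso}}_{n-1}(\mathbb{R}^{2m}) \hookrightarrow V_{n-1}(\mathbb{R}^{2m})$. The ordinary Stiefel manifold $V_{n-1}(\mathbb{R}^{2m})$ is $(2m-n-1)$-connected by the standard inductive fibration argument using fibers $S^{2m-j-1}$. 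For the isotropic Stiefel manifold, there is an analogous tower of fibrations in which extending an isotropic $j$-frame $\{e_1,\dots,e_j\}$ spanning $L$ to an isotropic $(j{+}1)$-frame amounts to choosing a vector in $L^{\omega}\setminus L$, which is homotopy equivalent to $S^{2m-2j-1}$. Iterating from $j=0$ up to $j=n-2$ shows that $V^{\mathrm{iso}}_{n-1}(\mathbb{R}^{2m})$ is $(2m-2n+2)$-connected.

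Third, I would combine both estimates through the long exact sequence of the pair $(V_{n-1},\, V^{\mathrm{iso}}_{n-1})$: since $V_{n-1}$ is more highly connected than $V^{\mathrm{iso}}_{n-1}$ in our range, the pair is $(2m-2n+3)$-connected, hence so is the inclusion of Grassmannians. Under the hypothesis $2m \geq 3n-1$, this connectivity is at least $n$, exactly what is required for the obstruction-theoretic deformation of $s_\eta$ to exist over an $n$-dimensional base $N$. The resulting homotopy of sections is precisely a homotopy of $\eta$ through rank-$(n-1)$ subbundles of $\xi$ ending at an isotropic subbundle.

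The main obstacle is the connectivity bookkeeping for $V^{\mathrm{iso}}_{n-1}(\mathbb{R}^{2m})$: one must correctly identify the fiber $L^{\omega}\setminus L\simeq S^{2m-2j-1}$ at each step (since isotropy forces the new vector to lie in the symplectic orthogonal of the already-chosen frame, which drops dimension by $j$ instead of $1$) and keep careful track of which Stiefel connectivity is the bottleneck in the final long exact sequence comparison. Once this is organized, the rest of the argument is routine obstruction theory, and the statement follows verbatim from the argument in \cite[Lemma 5.13]{CMPP1}.
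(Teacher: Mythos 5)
Your argument is correct and is essentially the proof of the cited result: the paper itself only quotes Lemma 5.13 from \cite{CMPP1}, and the proof there is exactly this obstruction-theoretic scheme (compress the section of the Grassmannian bundle into the isotropic Grassmannian subbundle, reduce to the Stiefel-level comparison via the free $\mathrm{GL}(n-1)$-actions, and compute the connectivity of the isotropic Stiefel manifold through the tower of fibrations with fibers $L^{\omega}\setminus L\simeq S^{2m-2j-1}$). The only slip is harmless bookkeeping: $V_{n-1}(\mathbb{R}^{2m})$ is $(2m-n)$-connected rather than $(2m-n-1)$-connected, and consequently your claimed $(2m-2n+3)$-connectivity of the pair requires $n\geq 3$ as stated, but in all cases the pair is $n$-connected under $2m\geq 3n-1$, which is all the argument needs.
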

We will use this lemma to prove the following technical result, which is implicitly proved in \cite{CMPP1}. We state it properly and prove it for the sake of completeness.

\begin{prop}\label{prop:isotr}
Let $(\hat \xi, \hat \omega)$ be a symplectic hyperplane distribution on $M^{2n+1}$, $N$ a submanifold such that $\dim M\geq 3\dim N$ and $\eta$ an hyperplane distribution on $N$. Then $(\hat \xi, \hat \omega)$ is homotopic through symplectic hyperplane distributions to $(\xi,\omega)$ satisfying that $\xi|_N \cap TN=\eta$ is an isotropic subbundle of $\xi$.
\end{prop}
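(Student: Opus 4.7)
The plan is to proceed in two stages. First, homotope $(\hat \xi, \hat \omega)$ through symplectic hyperplane distributions so that its intersection with $TN$ along $N$ becomes exactly $\eta$; then, keeping the underlying distribution fixed, further homotope the fiberwise symplectic form so that $\eta$ is isotropic. Both stages will be carried out by propagating bundle automorphisms, which automatically preserves the non-degeneracy condition.

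\medskip

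\textbf{Stage 1: arranging the intersection.} Choose a complement $C$ of $TN$ in $TM|_N$, so $\xi^{\star} := \eta \oplus C$ is a codimension-one subbundle of $TM|_N$ with $\xi^{\star} \cap TN = \eta$. Since the identity component of $\mathrm{GL}(TM|_N)$ acts transitively on oriented hyperplane subbundles, there is a smooth path $F_t$ of bundle automorphisms of $TM|_N$ with $F_0 = \mathrm{id}$ and $F_1(\hat \xi|_N) = \xi^{\star}$. Extend $F_t$ to a family of bundle automorphisms of $TM$ that equals the identity outside a tubular neighborhood of $N$ (via the standard trick of damping toward the identity within the identity component of the $\mathrm{GL}$-bundle), and set $\xi_t := F_t(\hat \xi)$ and $\omega_t := (F_t^{-1})^*\hat \omega$. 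Because each $F_t$ is a bundle isomorphism, $\omega_t|_{\xi_t}$ is fiberwise non-degenerate, and at $t=1$ we have $\xi_1|_N \cap TN = \eta$.

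\medskip

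\textbf{Stage 2: arranging isotropy.} Rename $(\xi', \omega') := (\xi_1, \omega_1)$. Over $N$ (of dimension $k$), the pair $(\xi'|_N, \omega')$ is a symplectic vector bundle of rank $2n$, and the hypothesis $\dim M = 2n+1 \geq 3k$ gives $2n \geq 3k-1$, which is exactly the bound in Lemma \ref{lem:isotro}. Since $\xi'|_N \cap TN = \eta$, that lemma supplies a path $\eta_t$ of rank-$(k-1)$ subbundles of $\xi'|_N$ with $\eta_0 = \eta$ and $\eta_1$ isotropic for $\omega'$. Lift this to a path $\Phi_t$ of bundle automorphisms of $\xi'|_N$ with $\Phi_0 = \mathrm{id}$ and $\Phi_1(\eta) = \eta_1$, taper $\Phi_t$ to the identity outside a tubular neighborhood of $N$ inside $\xi'$, and put $\omega_t' := \Phi_t^*\omega'$. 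Each $(\xi', \omega_t')$ is a symplectic hyperplane distribution; the distribution itself is unchanged, so $\xi'|_N \cap TN = \eta$ persists; and $\omega_1'|_\eta = 0$ since $\Phi_1(\eta) = \eta_1$ is $\omega'$-isotropic.

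\medskip

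Concatenating the two homotopies yields the desired family from $(\hat \xi, \hat \omega)$ to a pair $(\xi, \omega)$ realizing both conditions. The dimensional hypothesis is used only to invoke Lemma \ref{lem:isotro}; the substantive technical steps are the two extensions of automorphism paths from a submanifold/subbundle to the ambient bundle, both of which are standard once one stays in the identity component of the structure group and dampens to the identity away from $N$. I expect the smoothing of these extensions to be the only place requiring care, since everything pointwise is forced by the choice of $\xi^{\star}$ and by Lemma \ref{lem:isotro}.
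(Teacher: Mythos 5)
Your two-stage architecture is essentially the paper's own: first deform the hyperplane field so that its trace on $TN$ becomes $\eta$, then combine Lemma \ref{lem:isotro} with the stability Lemma \ref{lem:iso} to modify the fiberwise symplectic form until $\eta$ is isotropic, extending everything to $M$ by damping the automorphisms to the identity away from $N$. Stage 2 is correct as written.

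The gap is in Stage 1, in the sentence ``the identity component of $\mathrm{GL}(TM|_N)$ acts transitively on oriented hyperplane subbundles.'' By Lemma \ref{lem:iso}, that transitivity is \emph{equivalent} to the assertion that any two cooriented hyperplane subbundles of $TM|_N$ are homotopic through such subbundles, and this is false over a general base: cooriented hyperplane subbundles of the trivial rank-$(2n+1)$ bundle over $S^{2n}$ correspond to maps $S^{2n}\to S^{2n}$ and are classified by degree, so there are infinitely many path components. What makes your claim true here is obstruction theory: cooriented hyperplane subbundles are sections of a fiber bundle with fiber $S^{2n}$ over the base $N$ of dimension $k$, and any two sections are homotopic because $\pi_i(S^{2n})=0$ for $i\le k$, which requires $k<2n$ --- a consequence of the hypothesis $2n+1\ge 3k$. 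This is precisely the step the paper carries out explicitly (there phrased for the complementary line field, i.e.\ sections of the unit sphere bundle of $TM|_N$). Consequently your closing remark that the dimensional hypothesis ``is used only to invoke Lemma \ref{lem:isotro}'' is incorrect: it is also needed, in the weaker form $\dim N<\operatorname{rank}\hat\xi$, to make Stage 1 go through. Note also that for \emph{any} automorphism of $TM|_N$ to carry $\hat\xi|_N$ to $\xi^{\star}=\eta\oplus C$ one needs $TN/\eta\cong TM|_N/\hat\xi|_N$ to be trivial, i.e.\ $\eta$ coorientable; this is implicit in the paper's setting (there $\eta$ is a complement of an orientable line field) and should be recorded as a hypothesis. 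With the obstruction computation inserted, your argument is complete and runs parallel to the paper's.
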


\begin{proof}
Otherwise stated, we consider all distributions and tangent bundles over $N$, i.e. $TM$ denotes $TM|_N$.
Let $Y$ be some line field such that $TM=\hat \xi \oplus Y$. We first prove that $Y$ is homotopic to $L$ (a line field transverse to $\eta$) along the embedded submanifold $e(N)$ (that we denote by $N$ by abuse of notation). 

We want to find a family of bundle isomorphisms
$$ F_t\colon TM \rightarrow TM $$
such that $e$ is the induced map on the base and $(F_1)_*Y=L$. To find such a family, we will show that there is a path between any two sections of an $S^{2m}$ bundle (the unit tangent bundle of $M$ with respect to any metric) over a base space of dimension $n$. A sufficient condition is that 
$$ \pi_k (S^{2m})=0 \text{ for } k=0,...,n. $$
This is true as long as $n<2m$ which is clearly satisfied, since by hypothesis $3n-1 \leq 2m$. Hence, there is a path of non-vanishing sections $X_t$ of $TM$ such that $X_0=Y$ and $X_1=L$. Applying Lemma \ref{lem:iso}, we find a family of isomorphisms that can be extended to $TM$:
$$F_t\colon TM \rightarrow TM $$
such that $F_1(Y)=X$. The distribution 
$$\xi'=F_1(\xi)$$
is a symplectic vector bundle $(\xi',\omega'=(F_1^{-1})^*\hat \omega)$ of corank $1$ in $TM$ satisfying that  $\xi' \pitchfork X$. A linear interpolation $\eta_t$ between $\eta_0=\eta$ and $\eta_1=\xi'\cap TN$ is a well defined homotopy of subbundles of rank $n-1$ along $N$ since both $\eta$ and $\xi'\cap TN$ are both transverse to $X$. Using an auxiliary metric on $\xi'$, we can extend the homotopy to a homotopy of rank $2m$ complex bundles
$$ \xi_t= \eta_t \oplus (\eta_0)^\perp,  $$
where $(\eta_0)^\perp$ is the orthogonal of $\eta_0$ in $\xi'$ with respect to the auxiliary metric. This is a well-defined homotopy because $\eta_t$ lies in $TN$ for all $t$, which ensures that $\xi_t$ has constant rank $2m$. By Lemma \ref{lem:iso} there is a family of isomorphisms $\varphi_t\colon \xi_0 \rightarrow \xi_t$ such that $(\xi_t, w_t=(\varphi_t^{-1})^*\hat \omega)$ is a homotopy of symplectic hyperplane distributions. The distribution $\xi_1$ satisfies $\xi_1\cap TN=\eta$.

We now apply Lemma \ref{lem:isotro}, and find a family of subbundles $\eta_t\subset \xi_1$ such that $\eta_0=\eta$ and $\eta_1$ is an isotropic subbundle of $(\xi_1,\omega_1)$. Arguing as before, we use Lemma \ref{lem:iso} and extend it to a family of isomorphisms 
$$ \tilde \varphi_t\colon \xi_1\rightarrow \xi_t, \text{ for } t\in [1.2] $$
such that $(\xi_1,(\tilde \varphi_2^{-1})^*\omega_1)$ satisfies that $\eta$ is an isotropic subbundle. We extend the family $(\xi_t,\omega_t)$ for values of the parameter $t$ in $[1,2]$ as 
$(\xi_t,\omega_t)=(\xi_1,(\tilde \varphi_{t}^{-1})^*\omega_1).$

To conclude, we use the homotopy extension property to extend all the family $(\xi_t,\omega_t)$ of symplectic distributions of codimension to one defined over all the ambient manifold $M$. It satisfies $(\xi_0,\omega_0)=(\hat \xi, \hat \omega)$ and $\xi_2|_N\cap TN=\eta$ is an isotropic subbundle of $(\xi_2, \omega_2)$.
\end{proof}

\section{Germs of Hamiltonian structures}

 In this appendix, we include for completeness a proof of a variation of \cite[Lemma 16.2.2]{hprinc} for Hamiltonian structures.

\begin{lemma}\label{lem:Hamgerm}
Let $\eta=\ker \alpha$ be a tangent hyperplane distribution on a manifold $N$ and $\pi\colon E\rightarrow N$ a vector bundle over $N$. Let $\hat \eta$ be the codimension one subbundle of $TE|_N$ which is the direct sum of $\eta$ with $E$. Suppose that there exists a not necessarily closed two-form $\hat \omega$ of maximal rank in $E$, non-degenerate in $\hat \eta$, and such that $\hat \omega|_{TN|_N}=\tilde \omega$, a closed two-form in $N$. Then there exists a closed form $\omega$  of maximal rank defined in a small neighborhood of $N$ in $E$, non-degenerate in $\hat \eta$ and such that $\omega|_{TE|_N}=\hat \omega|_{TE|_N}$.
\end{lemma}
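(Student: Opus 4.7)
The plan is to construct $\omega$ in the form $\pi^{*}\tilde\omega + d\theta$ for a globally defined one-form $\theta$ that vanishes along the zero section $N\subset E$, and whose differential, when restricted to $TE|_{N}$, reproduces the $TN\otimes E$ and $\wedge^{2}E$ components of $\hat\omega|_{TE|_N}$. Closedness of $\omega$ is then automatic: $\pi^{*}\tilde\omega$ is closed since $\tilde\omega$ is, and $d\theta$ is obviously closed. Moreover, once the matching $\omega|_{TE|_{N}}=\hat\omega|_{TE|_{N}}$ is arranged, the open conditions of being of maximal rank and of being non-degenerate on $\hat\eta$ propagate from $N$ to a small neighborhood by continuity, so the only genuine content is the matching statement.

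First, using the splitting $TE|_{N}=TN\oplus E$, decompose $\hat\omega|_{TE|_{N}}=\pi^{*}\tilde\omega+\rho$, where $\rho$ carries precisely the components of type $TN\otimes E$ and $\wedge^{2}E$. In a local trivialization $U_{\alpha}\times\mathbb{R}^{k}$ with base coordinates $x_{i}$ and fiber coordinates $z_{j}$, the form $\rho$ reads, at $z=0$,
\[
\rho=\sum_{i,j}b_{ij}(x)\,dx_{i}\wedge dz_{j}+\sum_{i<j}c_{ij}(x)\,dz_{i}\wedge dz_{j}.
\]
Define the local primitive
\[
\theta_{\alpha}=-\sum_{i,j}b_{ij}(x)\,z_{j}\,dx_{i}+\tfrac{1}{2}\sum_{i<j}c_{ij}(x)\bigl(z_{i}\,dz_{j}-z_{j}\,dz_{i}\bigr).
\]
A direct computation shows that $\theta_{\alpha}$ vanishes along $N$, and that $d\theta_{\alpha}|_{TE|_{N}}=\rho|_{U_{\alpha}\cap N}$ (the terms in $d\theta_\alpha$ involving derivatives of $b_{ij}$ or $c_{ij}$ carry a factor of $z_j$ and drop out on the zero section).

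To globalize, choose a partition of unity $\{\chi_{\alpha}\}$ on $N$ subordinate to $\{U_{\alpha}\cap N\}$, pull it back to $E$ via $\pi$, and set
\[
\omega=\pi^{*}\tilde\omega+d\Bigl(\sum_{\alpha}(\pi^{*}\chi_{\alpha})\,\theta_{\alpha}\Bigr).
\]
The main subtlety is that a partition-of-unity patching of primitives usually spoils the matching through cross terms $d(\pi^{*}\chi_{\alpha})\wedge\theta_{\alpha}$; here, however, the crucial feature of our explicit ansatz is that each $\theta_{\alpha}$ vanishes on $N$, so these cross terms vanish on $TE|_{N}$. Combined with $\sum_{\alpha}\chi_{\alpha}=1$ and the local matching above, this yields $\omega|_{TE|_{N}}=\pi^{*}\tilde\omega+\rho=\hat\omega|_{TE|_{N}}$. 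After shrinking $E$ to a sufficiently small neighborhood of $N$, the form $\omega$ is closed, of maximal rank, and non-degenerate on $\hat\eta$ throughout, completing the construction.
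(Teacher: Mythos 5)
Your proposal is correct and is essentially the paper's own argument: both write $\hat\omega$ along $N$ as $\pi^*\tilde\omega$ plus a remainder with no $\wedge^2 TN$ component, take a local primitive of that remainder which is linear in the fiber coordinates (hence vanishes along the zero section), and patch with a pulled-back partition of unity, using the vanishing of the primitives along $N$ to kill the cross terms $d\chi\wedge\theta$ on $TE|_N$. The only cosmetic difference is that you write the primitive explicitly in coordinates (with the antisymmetrized fiber--fiber term), while the paper uses $\sum_i t_i\beta_i$ from the decomposition $\beta=\sum_i dt_i\wedge\beta_i+\beta'$.
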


\begin{proof}
Let $2m+1$ be the dimension of $E$, and $n$ the dimension of $N$. Fix a covering $U_j$ of $N$ trivializing $E$, in a way that $V_j\cong U_j \times \mathbb{R}^{2m-n}$ is a covering of $E$. Since $\hat \omega$ restricts as $\tilde \omega$ in $N$, we can write it as
$$ \hat \omega=\pi^*\tilde \omega + \beta, $$
where $\beta$ is some two-form such that $\beta|_{TN|_N}=0$. Let $t_1,...,t_k$ be coordinates in the fibers of $E$, where $k=2m+1-n$. Then $\beta$ writes as
$$\beta= \sum_{i=1}^k dt_i \wedge \beta_i + \beta' , $$
for some one-forms $\beta_1,...,\beta_k$, and a one-form $\beta'$ such that $\beta'|_{TE|_N}=0$.

Let $\chi_j$ be a partition of unity subordinated to $U_j$, and on each $U_j$ consider the two-form 
$$ \gamma_j= d( \chi_j \sum_{i=1}^k t_i \beta_i). $$
Observe that $\omega_j=\pi^*\tilde \omega+\gamma_j$ is closed, and by construction $\omega_j|_{TE|_{U_j}}=\hat \omega|_{TE|_{U_j}}$. In particular, it is necessarily of maximal rank near $U_j$ and non-degenerate in $\hat \eta|_{U_j}$. The two-form $\omega=\pi^*\tilde \omega+\sum_j \gamma_j$ is closed, satisfies $\omega|_{TE|_N}=\hat \omega|_{TE|_N}$ and thus in a neighborhood of $N$ it is of maximal rank and non-degenerate in $\hat \eta$.
\end{proof}

\section{$C^0$-close hypersurface for iso-Hamiltonian embeddings}

In this appendix, we follow the argument of Theorem \ref{thm:hprincPw} in the context of the proof of Theorem \ref{thm:isoHam}.

\begin{lemma}\label{lem:appHyper}
In Theorem \ref{thm:isoHam}, the formal iso-Hamiltonian embedding $e$ (with hypersurface $W$) is isotopic to a genuine iso-Hamiltonian embedding whose hypersurface is $C^0$-close to $W$.
\end{lemma}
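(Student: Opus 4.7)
The plan is to revisit the construction used in the proof of Theorem \ref{thm:isoHam} and to track the hypersurface under the application of the isosymplectic $h$-principle. Recall that the isosymplectic realization equips an open submanifold $E \subset M$ containing $N$ and transverse to $W$ with a symplectic form $\hat\omega_2$ such that $N\hookrightarrow (E,\hat\omega_2)$ is a genuine iso-Hamiltonian embedding with hypersurface $W' = W\cap E$, and the trivial inclusion $E\hookrightarrow M$ is a formal isosymplectic embedding of $(E,\hat\omega_2)$ into $(M,\omega)$ via the family of monomorphisms $R_t$.

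First I would apply Theorem \ref{thm:isosymp} (open case, codimension at least two) to obtain a genuine isosymplectic embedding $\phi\colon(E,\hat\omega_2)\to (M,\omega)$ homotopic to the trivial inclusion through formal isosymplectic embeddings, and arbitrarily $C^0$-close (in fact, by upgrading, $C^1$-close) to this inclusion on a chosen compact neighborhood $K$ of $N$ in $E$, with $N$ in a core. The new embedding $\tilde e_1 := \phi|_N$ is then $C^0$-close to $e$. Since $\phi$ is isosymplectic and $W'$ realized the iso-Hamiltonian structure of $N$ inside $(E,\hat\omega_2)$, the image $\phi(W')$ is a hypersurface of $\phi(E)\subset M$ containing $\tilde e_1(N)$ along which $\ker \omega = (\tilde e_1)_*L$. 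This already yields iso-Hamiltonicity \emph{locally}, inside $\phi(E)$.

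The key remaining step is to extend $\phi(W')$ to a \emph{global} hypersurface $\hat W \subset M$ that is $C^0$-close to $W$. Fix a tubular neighborhood of $W$ in $M$; by $C^1$-closeness of $\phi$ to the trivial inclusion on $K$, the hypersurface $\phi(W' \cap K)$ lies inside this neighborhood, transverse to its fibers, and is therefore the graph of a $C^0$-small function $f$ over $W$. Choose a cutoff $\rho \in C^\infty(M)$ equal to $1$ in a smaller neighborhood of $\tilde e_1(N)$ and to $0$ outside $\phi(K)$, and define $\hat W$ to be the graph of $\rho f$ over $W$ inside the tubular neighborhood, and to be $W$ itself outside. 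By construction $\hat W$ coincides with $\phi(W')$ in a neighborhood of $\tilde e_1(N)$ and with $W$ outside a slightly larger neighborhood, hence it is a smooth global hypersurface, it is $C^0$-close to $W$, and the kernel property $\ker \omega|_{\hat W}|_{\tilde e_1(N)} = (\tilde e_1)_*L$ is preserved because the germ of $\hat W$ along $\tilde e_1(N)$ equals that of $\phi(W')$.

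The main technical hurdle is ensuring enough regularity of $\phi$ near $N$ so that $\phi(W')$ may indeed be written as a graph over $W$ in the fixed tubular neighborhood; this is standard control on top of the $C^0$-density in Theorem \ref{thm:isosymp}, since the holonomic approximation underlying the isosymplectic $h$-principle provides arbitrary jet control on compact subsets of the core. The parametric and relative-to-domain versions of the lemma follow by carrying out the same argument parametrically, using the corresponding versions of Theorem \ref{thm:isosymp}, with a cutoff $\rho$ and tubular neighborhood chosen independently of the parameter.
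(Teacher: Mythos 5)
Your first step (applying Theorem \ref{thm:isosymp} to the isosymplectic realization and restricting to $N$) matches the paper, but your globalization step has a dimensional gap. Recall that $E$ has codimension $2k\geq 2$ in $M$, so $W'=W\cap E$ and hence $\phi(W')$ have codimension $2k+1$ in $M$: $\phi(W')$ is a hypersurface of $\phi(E)$ only, not of $M$. It therefore cannot be ``transverse to the fibers'' of a tubular neighborhood of the hypersurface $W$, nor written as the graph of a function over $W$ --- such a graph has dimension $\dim M-1$, while $\phi(W')$ has dimension $\dim M-2k-1$. The missing ingredient is precisely the step the paper supplies: one must first thicken $\phi(W')$ in the directions normal to the symplectic submanifold $\phi(E)$ so as to obtain an honest hypersurface germ of $M$ along $\tilde e_1(N)$. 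The paper does this via the Weinstein tubular neighborhood theorem: a neighborhood $U$ of $\phi(E)$ carries the symplectic form $\pi^*\hat\omega_2+\Omega$ with $\Omega$ fiberwise symplectic and vanishing along $\phi(E)$, and one takes the hypersurface $\{\pi^*h=0\}=\pi^{-1}(\phi(W'))$, where $h$ cuts out $\phi(W')$ inside $E$. This splitting of the form is also what guarantees the kernel condition $\ker i^*\omega|_{\tilde e_1(N)}=(\tilde e_1)_*L$ for the thickened hypersurface --- a point your argument does not address, since you only verify the kernel condition for $\phi(W')$ viewed inside $(\phi(E),\hat\omega_2)$, not for a hypersurface of $(M,\omega)$.

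Once this thickening is in place, your cutoff interpolation with the original $W$ is essentially the paper's final gluing step (phrased there as a compactly supported isotopy of $e_1(W)$ making it agree with $\{\pi^*h=0\}$ near $\tilde e_1(N)$), and it does produce a closed hypersurface $C^0$-close to $W$. One further caveat: Theorem \ref{thm:isosymp} gives $C^0$-closeness near the core, not $C^1$-closeness --- the differential of the genuine isosymplectic embedding must realize the tangential rotation $R_1$, so it is in general far from the differential of the inclusion --- hence you should not invoke $C^1$ control or jet control on $\phi$; the gluing must be arranged at the level of the thickened hypersurface instead.
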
 

\begin{proof}
Following the notation of the proof of Theorem \ref{thm:isoHam}, let us show that the hypersurface making $e_1|_N$ a genuine iso-Hamiltonian embedding is a global closed hypersurface that can be taken $C^0$-close to the hypersurface $W$ given in the formal embedding $e$. 
 
The manifold $E$ is open, so we can apply Theorem \ref{thm:isosymp} and find $\tilde e_t$ such that $\tilde e_0$ is just the trivial embedding of $E$ and $\tilde e_1$ is isosymplectic and $C^0$-close to $\tilde e_0$ near $N$ (which is a core of $E$). So, up to taking a smaller neighborhood of $N$ inside $E$ we can assume that $\tilde e_t$ is $C^0$-small. We claim that $\tilde e_1:=e_1|_N$ is a genuine small iso-Hamiltonian embedding. For this, it only remains to construct a hypersurface $\tilde W$ such that the kernel of $\omega$ along $N\subset \tilde W$ coincides with $(\tilde e_1)_*(L)$. The embedded submanifold $e_1(N)$ lies in $e_1(W')$, a hypersurface in $e_1(E)$. Furthermore, close to $N$ the submanifold $e_1(W')$ is $C^0$-close to $W'$. In a symplectic neighborhood of the symplectic submanifold $e_1(E)$ we consider an extension of $W'$ to a hypersurface $\tilde W$ as follows. By the Weinstein tubular neighborhood theorem, a neighborhood $U$ of $e_1(E)$ is given by a neighborhood of the zero section of its symplectic normal bundle. Hence $U$ is endowed with a symplectic form that splits as
$$\pi^*\omega_2+ \Omega,$$
where $\pi$ denotes the projection $\pi\colon U\rightarrow E$, and $\Omega$ global closed two-form that is fiberwise symplectic in the disk fibers and satisfies $\Omega|_E=0$. Let $h$ be a function on $E$ such that $e_1(W')$ is given by a regular connected component of $\{h=0\}$. Let $\tilde W$ be the hypersurface given by $\pi^*h=0$. Looking at $e_1$ as an ambient isotopy, the submanifold $e_1(W')$ lies in the hypersurface $e_1(W)$. In particular, we can slightly deform $e_1(W)$ by an isotopy compactly supported near $N$ so that it coincides with $\pi^*h=0$ near $e_1(N) \subset e_1(W')$. Summarizing, we constructed a closed hypersurface $i\colon \tilde W \longrightarrow M$ satisfying $\ker i^*\omega|_{e_1(N)}={\tilde {e_1}}_*L$ that is $C^0$-close to $e_1(W)$. The hypersurface $\tilde W$ is just a $C^0$-small perturbation (with support near $N$) of the hypersurface $W$ given in the formal iso-Hamiltonian embedding. 
\end{proof}

\end{document}